\newtheorem{lemma}{Lemma}[section]
\newtheorem{proposition}{Proposition}[section]
\newtheorem{theorem}{Theorem}[section]
\newtheorem{corollary}{Corollary}[section]
\newtheorem{definition}{Definition}[section]
\theoremstyle{definition}
\def\section{\@startsection{section}{1}%
\z@{1\linespacing\@plus\linespacing}{1\linespacing}%
{\bf\centering}}
\def\subsection{\@startsection{subsection}{0}%
\z@{\linespacing\@plus\linespacing}{\linespacing}%
{\bf}}
\DeclareMathOperator{\conv}{conv}
\newcommand{\sz}[1]{
	\begin{scope}[shift={(#1)}]
		\coordinate (0) at (0,0);
		\foreach \x in {1,2,3,...,6}{
			\path (0) ++(\x *60:1cm) coordinate (\x );
		}
		\filldraw[fill=lightgray,draw=black] (1) -- (2) -- (3) -- (4) -- (5) -- (6)  --cycle;
		\foreach \x in {1,2,3,...,6}{
			\filldraw [black] (\x) circle (1.5pt);
		}
	\end{scope}
}
\newcommand{\szb}[1]{
	\begin{scope}[shift={(#1)}]
		\coordinate (0) at (0,0);
		\foreach \x in {1,2,3,...,6}{
			\path (0) ++(\x *60:1cm) coordinate (\x );
		}
		\filldraw[fill=brown,draw=black] (1) -- (2) -- (3) -- (4) -- (5) -- (6)  --cycle;
		\foreach \x in {1,2,3,...,6}{
			\filldraw [black] (\x) circle (1.5pt);
		}
	\end{scope}
}
\newcommand{\ds}[1]{
	\begin{scope}[shift={(#1)}]
		\coordinate (00) at (0,0);
		\foreach \x in {0,1,2,3,...,9}{
			\path (00) ++(\x *36:1cm) coordinate (\x );
		}
		\filldraw[fill=lightgray,draw=black] (0) -- (1) -- (2) -- (3) -- (4) -- (5) -- (6)  -- (7)  -- (8) -- (9)--cycle;
		\foreach \x in {0,1,2,3,...,9}{
			\filldraw [black] (\x) circle (1.5pt);
		}
	\end{scope}
}
\newcommand{\dsone}[1]{
	\begin{scope}[shift={(#1)}]
		\coordinate (00) at (0,0);
		\foreach \x in {0,1,2,3,...,9}{
			\path (00) ++(\x *36:1cm) coordinate (\x );
		}
		\filldraw[fill=lightgray,draw=black] (0) -- (1) -- (2) -- (3) -- (4) -- (5) -- (6)  -- (7)  -- (8) -- (9)--cycle;
		\foreach \x in {0,1,2,3,...,9}{
			\filldraw [black] (\x) circle (1.5pt);
		}
		\node (00) {1};
	\end{scope}
}
\newcommand{\dsoneb}[1]{
	\begin{scope}[shift={(#1)}]
		\coordinate (00) at (0,0);
		\foreach \x in {0,1,2,3,...,9}{
			\path (00) ++(\x *36:1cm) coordinate (\x );
		}
		\filldraw[fill=brown,draw=black] (0) -- (1) -- (2) -- (3) -- (4) -- (5) -- (6)  -- (7)  -- (8) -- (9)--cycle;
		\foreach \x in {0,1,2,3,...,9}{
			\filldraw [black] (\x) circle (1.5pt);
		}
	\end{scope}
}
\newcommand{\dstwo}[1]{
	\begin{scope}[shift={(#1)}]
		\coordinate (00) at (0,0);
		\foreach \x in {0,1,2,3,...,9}{
			\path (00) ++(\x *36:1cm) coordinate (\x );
		}
		\filldraw[fill=lightgray,draw=black] (0) -- (1) -- (2) -- (3) -- (4) -- (5) -- (6)  -- (7)  -- (8) -- (9)--cycle;
		\foreach \x in {0,1,2,3,...,9}{
			\filldraw [black] (\x) circle (1.5pt);
		}
		\node at(-0.4,0) {2};
	\end{scope}
}
\newcommand{\kt}[1]{#1*36}
\newcommand{\kat}[1]{#1*60}
\newcommand{\cH}{\mathcal{H}}
\newcommand{\cA}{\mathcal{A}}
\newcommand{\cK}{\mathcal{K}}
\begin{document}
\title[Good labeling property of simple nested fractals]
{Good labeling property of simple nested fractals}
\author{Miko{\l}aj Nieradko and Mariusz Olszewski}

\address{M. Nieradko and M. Olszewski \\ Faculty of Pure and Applied Mathematics, Wroc{\l}aw University of Science and Technology, Wyb. Wyspia\'nskiego 27, 50-370 Wroc{\l}aw, Poland}
\email{mikolaj.nieradko@student.pwr.edu.pl, mariusz.olszewski@pwr.edu.pl}

\begin{abstract}
{We show various criteria to verify if a given nested fractal has a good labeling property, inter alia we present a characterization of GLP for fractals with an odd number of essential fixed points. \\
We show a convenient reduction of area to be investigated in verification of~GLP and give examples that further reduction is impossible.\\
We prove that if a number of essential fixed points is a power of two, then a fractal must have GLP and that there are no values other than primes or powers of two guaranteeing GLP. For all other numbers of essential fixed points we are able to construct examples having and other not having GLP.}

\bigskip
\noindent
\emph{Key-words}: nested fractal, good labelling property, projection, reflected process, Sierpi\'nski gasket, cyclotomic polynomial

\bigskip
\noindent
2020 {\it MS Classification}: Primary: 52C45, 28A80, 05B25.
\end{abstract}


\maketitle

\baselineskip 0.5 cm

\bigskip\bigskip

\section{Introduction}

Fractal sets are investigated by mathematicians for more than a hundred years. In 1915 Sierpi\'nski constructed a {\em curve, which every point is a branching point} later known as the Sierpi\'nski gasket. Benoit Mandelbrot in the second half of 20th century developed a theory of self-similar sets, introduced a notion of {\em a fractal} and contributed to many fields of science. He found self-similarity in financial markets, lungs structure, leaf shapes, shorelines (asking famous question: {\em How long is the coast of Britain?} \cite{bib:Mandelbrot}), and even a distribution of galaxy clusters.

In the late 20th century fractal sets became an object or rising interest of mathematicians specializing in analysis and probability theory, who found in them an interesting domain for operators and a state space for stochastic processes. In 1987-88 Barlow and Perkins \cite{bib:BP}, Goldstein \cite{bib:Goldstein} and Kusuoka \cite{bib:Kus} independently constructed the Brownian motion on the Sierpi\'nski gasket. In 1989 the existence of an analogous process on the Sierpi\'nski carpet was proved \cite{bib:BarBas} and in 1990 Lindstr\o m carried out a construction of the Brownian motion on a wide class of nested fractals. The corresponding Dirichlet form was given by Fukushima \cite{bib:Fuk1}. Kumagai \cite{bib:Kum} provided estimates of a free diffusion on nested fractals. Later, even more general classes like P.C.F sets and affine nested fractals were considered \cite{bib:Kig1,bib:FHK}. We refer to \cite{bib:Bar} as the general source of information about processes on fractals.

Stochastic processes on fractals are a useful tool in physical applications. Pietruska-Pa\l uba \cite{bib:KPP-PTRF} and Kaleta with Pietruska-Pa\l uba \cite{bib:KK-KPP,bib:KK-KPP2} proved existence and analyzed properties of the integrated density of states (IDS) for Schr\"odinger perturbations of a subordinate Brownian motion on Sierpi\'nski gasket. Later Balsam, Kaleta, Olszewski and Pietruska-Pa\l uba generalized a construction of IDS on nested fractals having the good labeling property (GLP). This property is crucial in construction of the reflected Brownian motion on nested fractals \cite{bib:KOPP,bib:O}, which is a relevant tool in proving an existence of IDS and analyzing its asymptotics \cite{bib:BKOPP1,bib:BKOPP2,bib:Balsam}.

GLP allows to construct a continuous projection from an unbounded fractal onto single complex of a given size. It can be also used to define periodic functions on nested fractals. Such objects on Sierpi\'nski gasket and its higher dimension analogues were examined by Ruan and Strichartz \cite{bib:RS, bib:Str}.

The subclass of nested fractals having GLP is very broad. In \cite{bib:KOPP} it was proved that fractals with prime number of essential fixed points always have GLP. It was also shown that if a fractal does not have inessential fixed points, it must have GLP. Finally, there was given characterization of GLP for fractals with even number of essential fixed points.

In this paper we thoroughly investigate the good labeling property and generalize the results from \cite{bib:KOPP}. In chapter 2 we give basic information about the nested fractals, the good labeling property and introduce the notation. In chapter 3 we present new results. Theorem \ref{thm:odd} is a counterpart of Theorem 3.4 in \cite{bib:KOPP} and gives characterization of GLP for fractals with odd number of essential fixed points. In case of odd number of essential fixed points Theorem 3.3 in \cite{bib:KOPP} can be read as a simple conclusion from our Theorem \ref{thm:odd}.

Theorems \ref{tw:niep} and \ref{thm:closedslice} allow to reduce the area needed to verify if a fractal has GLP. We also show that further reduction is impossible. We prove that a nested fractal can have a complex located in its center only in case of 3, 4 or 6 essential fixed points. If complexes are triangular or square, we always have GLP regardless of the structure of complexes, but in case of hexagonal complexes having a central complex ensures that the fractal does not have GLP.

Next, we present Theorem \ref{thm:2^n} which generalizes Corollary 3.1 from \cite{bib:KOPP} stating that fractals with 4 essential fixed points must have GLP. In Theorem \ref{thm:2^n} we prove that if a number of essential fixed points is a power of 2, then the fractal has GLP. Finally we show that primes and powers of 2 are the only numbers with that property. For any other number of essential fixed points we can construct examples having and other not having GLP.


\section{Preliminaries}

\subsection{Simple nested fractals} \label{sec:snf}

We follow the preliminary section of \cite{bib:KOPP}.

\begin{definition}
Let $\Psi_i : \mathbb{R}^2 \to \mathbb{R}^2$ for $i \in \{1,...,N\}$ be a collection of similitudes given by a formula
$$
\Psi_i(x) =  \frac{x}{L} + \nu_i,
$$
where $L>1$ is a scaling factor and $\nu_i \in \mathbb{R}^2$. 
There exists a unique nonempty compact set $\mathcal{K}^{\left\langle 0\right\rangle}$ (called {\em the  fractal generated by the system} $(\Psi_i)_{i=1}^N$) such that $\mathcal{K}^{\left\langle 0\right\rangle} = \bigcup_{i=1}^{N} \Psi_i\left(\mathcal{K}^{\left\langle 0\right\rangle}\right)$. 
\end{definition}

\begin{definition}
Let $F$ be the collection of fixed points of the transformations $\Psi_1, ..., \Psi_N$. A fixed point $x \in F$ is an essential fixed point if there exists another fixed point $y \in F$ and two different similitudes $\Psi_i$, $\Psi_j$ such that $\Psi_i(x)=\Psi_j(y)$.
The set of all essential fixed points for transformations $\Psi_1, ..., \Psi_N$ is denoted by $V_{0}^{\left\langle 0\right\rangle}$. Moreover, let $k=\# V^{\left\langle 0\right\rangle}_{0}$.
\end{definition}

For example let us look at the Vicsek Cross. Its vertices (denoted as $ v_i $) are vertices of square with side $ a $. Here the similitudes $ \Psi_{i} $ are given by following formulas:
\begin{eqnarray*}
	\Psi_{1}(x)&=&\frac{1}{3} x+\left( \frac{2}{3}a,\frac{2}{3}a\right),\\
	\Psi_{2}(x)&=&\frac{1}{3} x+\left( 0,\frac{2}{3}a\right),\\
	\Psi_{3}(x)&=&\frac{1}{3} x,\\
	\Psi_{4}(x)&=&\frac{1}{3} x+\left( \frac{2}{3}a,0\right),\\
	\Psi_{5}(x)&=&\frac{1}{3} x+\left( \frac{1}{3}a,\frac{1}{3}a\right).
\end{eqnarray*}
The point $ v_1 $ is an essential fixed point, because $ \Psi_{3}(v_1)=\Psi_{5}(v_3) $. In the Figure \ref{Vicsek} their image is denoted by $ w $.
\begin{figure}[H]
\centering
\begin{tikzpicture}
	\coordinate (A) at (0,0) ;
	\foreach \x in {1,2,3,4}{
		\path (A) ++(\x*90-45:1cm) coordinate (\x);
	}
	\filldraw[fill=lightgray,draw=black] (1) -- (2) -- (3) -- (4) -- cycle;
	
	\path (A) ++(90-45:2cm) coordinate (a);
	\foreach \x in {1,2,3,4}{
		\path (a) ++(\x*90-45:1cm) coordinate (a\x);
	}
	\filldraw[fill=lightgray,draw=black] (a1) -- (a2) -- (a3) -- (a4) -- cycle;
	
	\path (A) ++(90+45:2cm) coordinate (b);
	\foreach \x in {1,2,3,4}{
		\path (b) ++(\x*90-45:1cm) coordinate (b\x);
	}
	\filldraw[fill=lightgray,draw=black] (b1) -- (b2) -- (b3) -- (b4) -- cycle;
	
	\path (A) ++(-90-45:2cm) coordinate (c);
	\foreach \x in {1,2,3,4}{
		\path (c) ++(\x*90-45:1cm) coordinate (c\x);
	}
	\filldraw[fill=lightgray,draw=black] (c1) -- (c2) -- (c3) -- (c4) -- cycle;
	
	\path (A) ++(-90+45:2cm) coordinate (d);
	\foreach \x in {1,2,3,4}{
		\path (d) ++(\x*90-45:1cm) coordinate (d\x);
	}
	\filldraw[fill=lightgray,draw=black] (d1) -- (d2) -- (d3) -- (d4) -- cycle;
	
	\foreach \y in {a,b,c,d}{
		\foreach \x in {1,2,3,4}{
			\filldraw [black] (\y\x) circle (1.5pt);
		}
	}
	\node[above right] at (a1){$ v_1 $};
	\node[above left] at (b2){$ v_2 $};
	\node[below left] at (c3){$ v_3 $};
	\node[above right] at (c1){$ w $};
	\node[below right] at (d4){$ v_4 $};
\end{tikzpicture}
\caption{Vicsek Cross.}\label{Vicsek}
\end{figure}
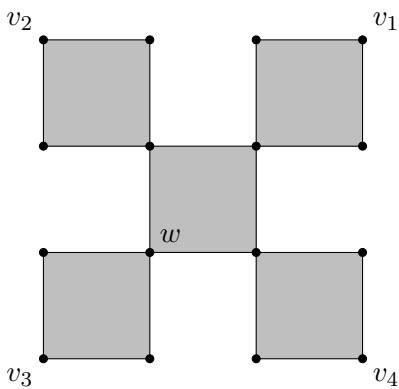

\begin{definition}[\textbf{Simple nested fractal}]
\label{def:snf}
 The fractal $\mathcal{K}^{\left\langle 0 \right\rangle}$ generated by the system $(\Psi_i)_{i=1}^N$ is called a \emph{simple nested fractal (SNF)} if the following conditions are met.
\begin{enumerate}
\item $\# V_{0}^{\left\langle 0\right\rangle} \geq 2.$
\item (Open Set Condition) There exists an open set $U \subset \mathbb{R}^2$ such that for $i\neq j$ one has\linebreak $\Psi_i (U) \cap \Psi_j (U)= \emptyset$ and $\bigcup_{i=1}^N \Psi_i (U) \subseteq U$.
\item (Nesting) $\Psi_i\left(\mathcal{K}^{\left\langle 0 \right\rangle}\right) \cap \Psi_j \left(\mathcal{K}^{\left\langle 0 \right\rangle}\right) = \Psi_i \left(V_{0}^{\left\langle 0\right\rangle}\right) \cap \Psi_j \left(V_{0}^{\left\langle 0\right\rangle}\right)$ for $i \neq j$.
\item (Symmetry) For $x,y \in V_{0}^{\left\langle 0\right\rangle},$ let $S_{x,y}$ denote the symmetry with respect to the line bisecting the segment $\left[x,y\right]$. Then
\begin{equation*}
\forall i \in \{1,...,M\} \ \forall x,y \in V_{0}^{\left\langle 0\right\rangle} \ \exists j \in \{1,...,M\} \ S_{x,y} \left( \Psi_i \left(V_{0}^{\left\langle 0\right\rangle} \right) \right) = \Psi_j \left(V_{0}^{\left\langle 0\right\rangle} \right).
\end{equation*}
\item (Connectivity) On the set $V_{-1}^{\left\langle 0\right\rangle}:= \bigcup_i \Psi_i \left(V_{0}^{\left\langle 0\right\rangle}\right)$ we define graph structure $E_{-1}$ as follows:\\
$(x,y) \in E_{-1}$ if and only if $x, y \in \Psi_i\left(\mathcal{K}^{\left\langle 0 \right\rangle}\right)$ for some $i$.\\
Then the graph $(V_{-1}^{\left\langle 0\right\rangle},E_{-1} )$ is required to be connected.
\end{enumerate}
\end{definition}

If  $\mathcal{K}^{\left\langle 0 \right\rangle}$ is a simple nested fractal, then we let
\begin{align} \label{eq:Kn}
\mathcal{K}^{\left\langle M\right\rangle} = L^M \mathcal{K}^{\left\langle 0\right\rangle}, \quad M \in \mathbb{Z},
\end{align}
and
\begin{align} \label{eq:Kinfty}
\mathcal{K}^{\left\langle \infty \right\rangle} = \bigcup_{M=0}^{\infty} \mathcal{K}^{\left\langle M\right\rangle}.
\end{align}
The set $\mathcal{K}^{\left\langle \infty \right\rangle}$ is the \textbf{unbounded simple nested fractal (USNF)}.

The remaining notions are collected in a single definition.
\begin{definition} Let $M\in\mathbb Z.$
\begin{itemize}
\item[(1)] $M$-complex: \label{def:Mcomplex}
every set $\Delta_M \subset \mathcal{K}^{\left\langle \infty \right\rangle}$ of the form
\begin{equation} \label{eq:Mcompl}
\Delta_M  = \mathcal{K}^{\left\langle M \right\rangle} + \nu_{\Delta_M},
\end{equation}
where $\nu_{\Delta_M}=\sum_{j=M+1}^{J} L^{j} \nu_{i_j},$ for some $J \geq M+1$, $\nu_{i_j} \in \left\{\nu_1,...,\nu_N\right\}$, is called an \emph{$M$-complex}.
\item[(2)] Vertices of the $M$-complex \eqref{eq:Mcompl}: the set $V\left(\Delta_M\right) =L^MV_0^{\langle 0 \rangle}+\nu_{\Delta_M}= L^{M} V^{\left\langle 0 \right\rangle}_0 + \sum_{j=M+1}^{J} L^{j} \nu_{i_j}$.
\item[(3)] Vertices of $\mathcal{K}^{\left\langle M \right\rangle}$:
$$
V^{\left\langle M\right\rangle}_{M} = V\left(\mathcal{K}^{\left\langle M \right\rangle}\right) = L^M V^{\left\langle 0\right\rangle}_{0}.
$$
\item[(4)] Vertices of all $M$-complexes inside a $(M+m)$-complex for $m>0$:
$$
V_M^{\langle M+m\rangle}= \bigcup_{i=1}^{N} V_M^{\langle M+m-1\rangle} + L^M \nu_i.
$$
\item[(5)] Vertices of all 0-complexes inside the unbounded nested fractal:
$$
V^{\left\langle \infty \right\rangle}_{0} = \bigcup_{M=0}^{\infty} V^{\left\langle M\right\rangle}_{0}.
$$
\item[(6)] Vertices of $M$-complexes from the unbounded fractal:
$$
V^{\left\langle \infty \right\rangle}_{M} = L^{M} V^{\left\langle \infty \right\rangle}_{0}
$$
\item[(7)] $\cH(\Delta_M) = \conv(V(\Delta_M))$ - the convex hull of the set of vertices of $\Delta_M$. It is a regular polygon with vertices being the vertices of the complex (see Proposition \ref{wielokat_foremny}).
\item[(8)] $ \mathcal{B}_{\mathcal{K}^{\langle M \rangle}} $ - the barycenter of $\mathcal{K}^{\langle M \rangle}$. Analogously we denote the barycenter of $\Delta_M$ as $ \mathcal{B}_{\Delta_M}$.
\end{itemize}
\end{definition}

In the Figure \ref{t.S.} we show $\mathcal{K}^{\langle M \rangle}$ for $ M\in\{0,1,2\} $ in case of the Sierpi\'nski gasket. We also show exemplary complexes $ \Delta_{0} $ and $ \Delta_{1} $.
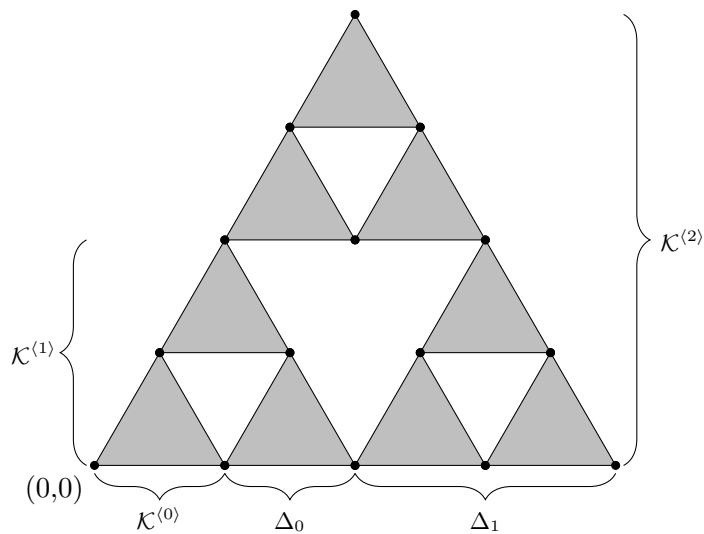
\begin{figure}[ht]
\centering
\begin{tikzpicture}
			\coordinate (A) at (0,0) ;
	\foreach \x in {1,2,3}{
		\path (A) ++(\x*120+90:1cm) coordinate (\x);
	}
	\filldraw[fill=lightgray,draw=black] (1) -- (2) -- (3) -- cycle;
	\foreach \x in {1,2,3}{
		\filldraw [black] (\x) circle (1.5pt);
	}
	\node[below left] at (1) {(0,0)};
	\path (A) ++(-30:1cm)++(30:1cm) coordinate (B);
	\foreach \x in {1,2,3}{
		\path (B) ++(\x*120+90:1cm) coordinate (B\x);
	}
	\filldraw[fill=lightgray,draw=black] (B1) -- (B2) -- (B3) -- cycle;
	\foreach \x in {1,2,3}{
		\filldraw [black] (B\x) circle (1.5pt);
	}
	\path (B) ++(-30:1cm)++(30:1cm) coordinate (C);
	\foreach \x in {1,2,3}{
		\path (C) ++(\x*120+90:1cm) coordinate (C\x);
	}
	\filldraw[fill=lightgray,draw=black] (C1) -- (C2) -- (C3) -- cycle;
	\foreach \x in {1,2,3}{
		\filldraw [black] (C\x) circle (1.5pt);
	}
	\path (C) ++(-30:1cm)++(30:1cm) coordinate (D);
	\foreach \x in {1,2,3}{
		\path (D) ++(\x*120+90:1cm) coordinate (D\x);
	}
	\filldraw[fill=lightgray,draw=black] (D1) -- (D2) -- (D3) -- cycle;
	\foreach \x in {1,2,3}{
		\filldraw [black] (D\x) circle (1.5pt);
	}

	\path (A) ++(90:1cm)++(30:1cm) coordinate (E);
	\foreach \x in {1,2,3}{
		\path (E) ++(\x*120+90:1cm) coordinate (E\x);
	}
	\filldraw[fill=lightgray,draw=black] (E1) -- (E2) -- (E3) -- cycle;
	\foreach \x in {1,2,3}{
		\filldraw [black] (E\x) circle (1.5pt);
	}
	\path (E) ++(-30:2cm)++(30:2cm) coordinate (F);
	\foreach \x in {1,2,3}{
		\path (F) ++(\x*120+90:1cm) coordinate (F\x);
	}
	\filldraw[fill=lightgray,draw=black] (F1) -- (F2) -- (F3) -- cycle;
	\foreach \x in {1,2,3}{
		\filldraw [black] (F\x) circle (1.5pt);
	}

	\path (E) ++(90:1cm)++(30:1cm) coordinate (G);
	\foreach \x in {1,2,3}{
		\path (G) ++(\x*120+90:1cm) coordinate (G\x);
	}
	\filldraw[fill=lightgray,draw=black] (G1) -- (G2) -- (G3) -- cycle;
	\foreach \x in {1,2,3}{
		\filldraw [black] (G\x) circle (1.5pt);
	}
	\path (G) ++(-30:1cm)++(30:1cm) coordinate (H);
	\foreach \x in {1,2,3}{
		\path (H) ++(\x*120+90:1cm) coordinate (H\x);
	}
	\filldraw[fill=lightgray,draw=black] (H1) -- (H2) -- (H3) -- cycle;
	\foreach \x in {1,2,3}{
		\filldraw [black] (H\x) circle (1.5pt);
	}

	\path (H) ++(90:1cm)++(150:1cm) coordinate (I);
	\foreach \x in {1,2,3}{
		\path (I) ++(\x*120+90:1cm) coordinate (I\x);
	}
	\filldraw[fill=lightgray,draw=black] (I1) -- (I2) -- (I3) -- cycle;
	\foreach \x in {1,2,3}{
		\filldraw [black] (I\x) circle (1.5pt);
	}

	\path (1) ++(0,3) coordinate(1');
	\draw [decorate,decoration={brace,amplitude=10pt,raise=3pt},xshift=-4pt,yshift=0pt]
	(1) -- (1') node [black,midway,xshift=-0.8cm] 
	{\footnotesize $\mathcal{K}^{\langle 1 \rangle}$};
	
	\draw [decorate,decoration={brace,amplitude=10pt,mirror,raise=3pt},yshift=0pt]
	(1) -- (2) node [black,midway,yshift=-0.7cm] {\footnotesize
		$\mathcal{K}^{\langle 0 \rangle}$};
	
	\draw [decorate,decoration={brace,amplitude=10pt,mirror,raise=3pt},yshift=0pt]
	(2) -- (B2) node [black,midway,yshift=-0.75cm] {\footnotesize
		$\Delta_{0}$};
	
	\draw [decorate,decoration={brace,amplitude=10pt,mirror,raise=3pt},yshift=0pt]
	(C1) -- (D2) node [black,midway,yshift=-0.75cm] {\footnotesize
		$\Delta_{1}$};
	
	\path (D2) ++(0,6) coordinate(D2');
	\draw [decorate,decoration={brace,amplitude=10pt,mirror,raise=3pt},yshift=0pt]
	(D2) -- (D2') node [black,midway,xshift=0.9cm] {\footnotesize
		$\mathcal{K}^{\langle 2 \rangle}$};
\end{tikzpicture}
\caption{The Sierpi\'nski Gasket.}\label{t.S.}
\end{figure}

We now recall the proposition, which is crucial for the next section.
	\begin{proposition}{(\cite{bib:KOPP}, Proposition 2.1.)}\label{wielokat_foremny}
		If $k \geq 3$, then points from $V_{0}^{\left\langle 0\right\rangle}$ are the vertices of a regular polygon.
		
If $k=2$, then $\mathcal{K}^{\left\langle 0 \right\rangle}$ is just a segment connecting $x_1$ and $x_2$.
	\end{proposition}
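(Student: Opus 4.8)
The plan is to derive the statement from the Symmetry axiom, item~(4) of Definition~\ref{def:snf}, via two facts: (A) all essential fixed points are concyclic, and (B) every bisector reflection $S_{x,y}$ with $x,y\in V_0^{\langle 0\rangle}$ maps $V_0^{\langle 0\rangle}$ onto itself. Granting (A) and (B), the case $k\ge 3$ will follow from a short computation with arcs, and the case $k=2$ from a connectedness argument.

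For (A), fix $x,y\in V_0^{\langle 0\rangle}$ and put $S:=S_{x,y}$, an involutive affine isometry of $\R^2$. By the Symmetry axiom $S$ sends each $\Psi_i(V_0^{\langle 0\rangle})$ onto some $\Psi_j(V_0^{\langle 0\rangle})$; since the $N$ sets $\Psi_i(V_0^{\langle 0\rangle})=L^{-1}V_0^{\langle 0\rangle}+\nu_i$ are pairwise distinct, $i\mapsto j$ is a permutation $\sigma$ of $\{1,\dots,N\}$, so $S$ permutes the family $\{\Psi_i(V_0^{\langle 0\rangle})\}_{i=1}^N$ and, in particular, $S(V_{-1}^{\langle 0\rangle})=V_{-1}^{\langle 0\rangle}$. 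Writing $\beta$ for the barycentre of $V_0^{\langle 0\rangle}$, the point $\Psi_i(\beta)$ is the barycentre of $\Psi_i(V_0^{\langle 0\rangle})$; as $S$ is affine and permutes these sets, it permutes the $\Psi_i(\beta)$ and therefore fixes their average $c:=\frac1N\sum_{i=1}^N\Psi_i(\beta)$. An isometry that fixes $c$ and swaps $x$ with $y$ satisfies $|x-c|=|y-c|$, so, letting $x,y$ range over $V_0^{\langle 0\rangle}$, all essential fixed points lie on one circle $C$, of radius $\rho$ say, centred at $c$.

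For (B): the map $S:=S_{x,y}$ also fixes $V_{-1}^{\langle 0\rangle}$, and since it fixes $c$ it preserves $C$, so $S(V_0^{\langle 0\rangle})\subseteq S(V_{-1}^{\langle 0\rangle})\cap S(C)=V_{-1}^{\langle 0\rangle}\cap C$; hence it suffices to prove $V_{-1}^{\langle 0\rangle}\cap C=V_0^{\langle 0\rangle}$, as then equality in (B) follows because $S$ is an involution. Because $\nu_j=(1-L^{-1})x_j$, where $x_j$ is the fixed point of $\Psi_j$, one has $\Psi_j(c)-c=(1-L^{-1})(x_j-c)$; thus the circle $\Psi_j(C)$ of radius $\rho/L$ about $\Psi_j(c)$ is internally tangent to $C$ precisely at $x_j$ when $x_j\in V_0^{\langle 0\rangle}$ (then $|x_j-c|=\rho$), and --- provided $|x_j-c|\le\rho$, i.e.\ provided $\cK^{\langle 0\rangle}$ is contained in the closed disc bounded by $C$, which is the fact $\conv(\cK^{\langle 0\rangle})=\conv(V_0^{\langle 0\rangle})$ --- lies inside that disc and meets $C$ at most at $x_j$ when $x_j$ is inessential. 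In the inessential case $x_j\notin\Psi_j(V_0^{\langle 0\rangle})$ (from $\Psi_j(v)=x_j$ one would get $v=x_j\notin V_0^{\langle 0\rangle}$), and since $\Psi_j(V_0^{\langle 0\rangle})\subseteq\Psi_j(C)$ we conclude that $\Psi_j(V_0^{\langle 0\rangle})\cap C$ equals $\{x_j\}$ if $x_j$ is essential and $\emptyset$ otherwise; therefore $V_{-1}^{\langle 0\rangle}\cap C=\bigcup_j\bigl(\Psi_j(V_0^{\langle 0\rangle})\cap C\bigr)=V_0^{\langle 0\rangle}$. This is the only non-elementary step, and I expect the input $\conv(\cK^{\langle 0\rangle})=\conv(V_0^{\langle 0\rangle})$ --- standard for simple nested fractals --- to be where the real content lies.

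To finish, suppose $k\ge 3$ and list $V_0^{\langle 0\rangle}=\{p_0,\dots,p_{k-1}\}$ in cyclic order around $C$, with $\alpha_i$ the length of the arc from $p_i$ to $p_{i+1}$ (indices mod $k$). Each $S_{p_i,p_{i+1}}$ has its axis through $c$ ($c$ being equidistant from $p_i$ and $p_{i+1}$), hence is an orientation-reversing isometry of $C$, and by (B) it permutes the $k$ vertices; being orientation-reversing on $C$ and swapping the adjacent pair $p_i\leftrightarrow p_{i+1}$, it must act as $p_{i+1+t}\mapsto p_{i-t}$, so it maps the arc $p_{i+1}p_{i+2}$ isometrically onto the arc $p_ip_{i-1}$, whence $\alpha_{i+1}=\alpha_{i-1}$. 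Thus $\alpha_i=\alpha_{i+2}$ for all $i$: if $k$ is odd this already forces $\alpha_0=\dots=\alpha_{k-1}$, while if $k$ is even it leaves at most two values, and then $S_{p_0,p_2}$ --- an orientation-reversing isometry of $C$ that swaps $p_0\leftrightarrow p_2$ and so fixes $p_1$ --- maps the arc $p_0p_1$ isometrically onto $p_1p_2$, giving $\alpha_0=\alpha_1$ and again all $\alpha_i$ equal. Hence $V_0^{\langle 0\rangle}$ is the set of vertices of a regular $k$-gon. Finally, when $k=2$, say $V_0^{\langle 0\rangle}=\{x_1,x_2\}$: Connectivity together with self-similarity makes $\cK^{\langle 0\rangle}$ connected, while $\conv(\cK^{\langle 0\rangle})=\conv\{x_1,x_2\}=[x_1,x_2]$ confines $\cK^{\langle 0\rangle}$ to that segment, and a compact connected subset of a segment containing both of its endpoints is the whole segment, so $\cK^{\langle 0\rangle}=[x_1,x_2]$.
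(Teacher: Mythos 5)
The paper itself gives no proof of this proposition --- it is imported verbatim from \cite{bib:KOPP} (Proposition 2.1) --- so I can only judge your argument on its own merits. Your overall architecture is the right one: step (A) (all reflections $S_{x,y}$ fix the common point $c=\frac1N\sum_i\Psi_i(\beta)$, hence $V_0^{\langle 0\rangle}$ is concyclic) is correct and cleanly executed, and the final arc-length computation deducing regularity from (A) together with (B) is also correct, including the even-$k$ case where you bring in $S_{p_0,p_2}$.

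The genuine gap is exactly where you suspected it: step (B) rests on the unproven claim $\conv(\cK^{\langle 0\rangle})=\conv(V_0^{\langle 0\rangle})$, used to guarantee $|x_j-c|\le\rho$ for \emph{inessential} fixed points $x_j$, and this claim is not an axiom of Definition~\ref{def:snf}, is not established anywhere in the paper, and is not a harmless ``standard fact'' at this point of the development --- it is of essentially the same depth as the proposition itself, and the natural attempts to prove it are circular. Concretely: if $m=\max_{z\in\cK^{\langle 0\rangle}}|z-c|$ is attained at $z^*=\Psi_j(w)$, then $|z^*-c|\le L^{-1}|w-c|+(1-L^{-1})|x_j-c|$, which yields $m=\max_j|x_j-c|$; so ``$\cK^{\langle 0\rangle}$ lies in the closed disc bounded by $C$'' is \emph{equivalent} to ``every fixed point, essential or not, lies in that disc'', which is precisely the statement you need as input. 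Thus the disc-invariance route cannot break the circle, and ruling out an inessential fixed point outside $C$ requires a separate argument from the Open Set, Nesting, and Connectivity axioms that your proof does not supply. Note also that the $k=2$ case inherits the same dependency (there you need the even stronger containment $\cK^{\langle 0\rangle}\subseteq[x_1,x_2]$). Everything else --- the permutation of the cells under $S_{x,y}$, the fixed point $c$, the internal tangency of $\Psi_j(C)$ to $C$ at $x_j$ when $|x_j-c|=\rho$, and the combinatorics of the arcs --- is fine; the proof would be complete once the location of the inessential fixed points is actually controlled.
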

From now on we assume that $k \geq 3.$

\subsection{Good labelling property}\label{sec:glp}
In this section we will present basic definitions and theorems related to the good labeling property (GLP) of the USNF. The results are cited from \cite{bib:KOPP}.

  Consider the alphabet of $k$ symbols $\cA:=\left\{a_1, a_2,a_3,...,a_k\right\}$,  where  $ k=\# V^{\left\langle 0\right\rangle}_{0}\geq 3.$ The elements of $\cA$ will be called labels.
 \begin{definition}\label{def:labeling}
  Let $M \in \mathbb{Z}$. A \emph{labelling function of order $M$} is any map $l_M: V^{\left\langle \infty \right\rangle}_{M} \to \cA.$
  \end{definition}

Since the vertices of every $M$-complex $\Delta_M$ are the vertices of a regular polygon with $k$ vertices, there exist exactly $k$ different rotations around the barycenter of $\mathcal K^{\langle M \rangle}$,  mapping $V^{\left\langle M \right\rangle}_{M}$ onto $V^{\left\langle M \right\rangle}_{M}$. They will be denoted by  $\{R_1, ..., R_k\}=: \mathcal{R}_M $ (the rotations are ordered in such a way that for $i=1,2,...,k,$ the rotation $R_i$ rotates by angle $\frac{2\pi i}{k})$.

\begin{definition}{\cite[Definition 3.2]{bib:KOPP}}[\textbf{Good labelling function of order $M$}] 
\label{def:glp} Let $M \in \mathbb{Z}$.  A function $\ell_M: V^{\left\langle \infty \right\rangle}_{M} \to \cA$  is called a \emph{good labelling function of order $M$} if the following conditions are met.
\begin{itemize}
\item[(1)] The restriction of $\ell_M$ to $V^{\left\langle M \right\rangle}_{M}$ is a bijection onto $\cA$.
\item[(2)] For every $M$-complex $\Delta_M$ represented as
$$
\Delta_M  = \mathcal{K}^{\left\langle M \right\rangle} +\nu_{\Delta_M},
$$
where $\nu_{\Delta_M}=  \sum_{j=M+1}^{J} L^{j} \nu_{i_j},$  with some $J \geq M+1$ and $\nu_{i_j} \in \left\{\nu_1,...,\nu_N\right\}$ (cf. Def. \ref{def:Mcomplex}), there exists a rotation $R_{\Delta_M} \in \mathcal{R}_M$ such that
\begin{align}\label{eq:rot}
\ell_M(v)=\ell_M\left(R_{\Delta_M}\left(v -\nu_{\Delta_M}\right)\right) , \quad v \in V\left(\Delta_M\right).
\end{align}
\end{itemize}
An USNF $\mathcal{K}^{\left\langle \infty \right\rangle}$ is said to have the  \emph{good labelling property of order $M$} if a good labelling function of order $M$ exists.
\end{definition}

Note that for every $M$-complex $\Delta_M$  the restriction of a good labelling function to $V\left(\Delta_M\right)$ is a bijection onto $\cA$.

Thanks to the selfsimilar structure of $\mathcal{K}^{\left\langle \infty \right\rangle},$ the \emph{good labelling property of order $M$} for some $M \in \mathbb{Z}$ is equivalent to this property of any other order $\widetilde{M} \in \mathbb{Z}$. This gives rise to the following general definition.

\begin{definition}{\cite[Definition 3.3]{bib:KOPP}}[\textbf{Good labelling property}] \label{def:glp_gen} 
An USNF $\mathcal{K}^{\left\langle \infty \right\rangle}$ is said to have the \emph{good labelling property (GLP in short)} if it has the {good labelling property of order $M$} for some $M \in \mathbb{Z}$.
\end{definition}

\begin{proposition}{\cite[Proposition 3.1]{bib:KOPP}}
For USNF's with the GLP, for any $M\in \mathbb Z$ the good labelling of order $M$ is unique up to a permutation of the alphabet set $\cA.$ In particular,
if $\mathcal{K}^{\left\langle \infty \right\rangle}$ has  the GLP and
a bijection $\widetilde{\ell}_M: V^{\left\langle M \right\rangle}_{M} \to \mathcal{A}$ is given, then there exists a unique
good labelling function $\ell_M:V_M^{\langle \infty\rangle} \to \cA$ such that $\ell_M|_{V_{M}^{\langle M\rangle}}=\widetilde \ell_M.$
\end{proposition}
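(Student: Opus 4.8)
The plan is to reduce everything to order $0$ and then spread the equality of two good labellings outward from the central complex along chains of complexes, using the fact that a rotation of a regular $k$-gon is pinned down by the image of a single vertex. First I would observe that multiplication by $L^{-M}$ identifies $V_M^{\langle\infty\rangle}$ with $V_0^{\langle\infty\rangle}$ and sends $M$-complexes to $0$-complexes while preserving all the data in Definition~\ref{def:glp}, so good labelling functions of order $M$ are in bijection with those of order $0$; hence it suffices to treat $M=0$. Given two good labelling functions $\ell_0,\ell_0'$ of order $0$, condition~(1) makes both restrict to bijections $V_0^{\langle0\rangle}\to\cA$, so $\ell_0'=\sigma\circ\ell_0$ on $V_0^{\langle0\rangle}$ for some permutation $\sigma$ of $\cA$. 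Since post-composing a good labelling with a permutation of $\cA$ is again a good labelling (conditions (1)--(2) survive, with the same associated rotations $R_{\Delta_0}$), after replacing $\ell_0$ by $\sigma\circ\ell_0$ I may assume $\ell_0=\ell_0'$ on $V_0^{\langle0\rangle}$; the goal then becomes to show $\ell_0=\ell_0'$ on all of $V_0^{\langle\infty\rangle}$, which yields the uniqueness-up-to-permutation claim and, by choosing $\ell_0$ to match a prescribed bijection $\widetilde\ell_M$ after such a permutation, the ``in particular'' part as well.

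The core is a propagation step: if $\ell_0$ and $\ell_0'$ already agree on the vertices $V(\Delta_0)$ of a $0$-complex $\Delta_0$, and $\Delta_0'$ is a $0$-complex sharing a vertex with $\Delta_0$, then they agree on $V(\Delta_0')$ too. To see this I would write $\Delta_0'=\mathcal{K}^{\langle0\rangle}+\nu_{\Delta_0'}$, pick via Definition~\ref{def:glp}(2) rotations $R,R'\in\mathcal{R}_0$ with $\ell_0(v)=\ell_0(R(v-\nu_{\Delta_0'}))$ and $\ell_0'(v)=\ell_0'(R'(v-\nu_{\Delta_0'}))$ for $v\in V(\Delta_0')$, and then evaluate at a common vertex $w\in V(\Delta_0)\cap V(\Delta_0')$: using $\ell_0(w)=\ell_0'(w)$, the standing identity $\ell_0=\ell_0'$ on $V_0^{\langle0\rangle}$, and injectivity of $\ell_0'$ on $V_0^{\langle0\rangle}$, one gets $R(w-\nu_{\Delta_0'})=R'(w-\nu_{\Delta_0'})$. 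Since $w-\nu_{\Delta_0'}$ is a vertex of the regular $k$-gon $\cH(\mathcal{K}^{\langle0\rangle})$ (Proposition~\ref{wielokat_foremny}) and the only rotation in the cyclic group $\mathcal{R}_0$ fixing a vertex is the identity, this forces $R=R'$; plugging back in, $\ell_0(v)=\ell_0'(v)$ for every $v\in V(\Delta_0')$.

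To conclude I would start from $\mathcal{K}^{\langle0\rangle}$ itself --- a $0$-complex on whose vertices $\ell_0=\ell_0'$ by the normalization --- and propagate. The Connectivity axiom of Definition~\ref{def:snf}, rescaled by the maps $\Psi_i$ and iterated, shows that for each $M$ the $0$-complexes inside $\mathcal{K}^{\langle M\rangle}$ form a graph that is connected under the ``share a vertex'' adjacency; since the sets $\mathcal{K}^{\langle M\rangle}$ increase to $\mathcal{K}^{\langle\infty\rangle}$ and all contain $\mathcal{K}^{\langle0\rangle}$, the adjacency graph of all $0$-complexes of the \emph{unbounded} fractal is connected. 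Thus every $0$-complex is linked to $\mathcal{K}^{\langle0\rangle}$ by a finite chain of pairwise-adjacent $0$-complexes, and iterating the propagation step along it gives $\ell_0=\ell_0'$ on that complex; as $V_0^{\langle\infty\rangle}$ is the union of the vertex sets of all $0$-complexes, $\ell_0=\ell_0'$ everywhere. For the ``in particular'' clause, existence of $\ell_M$ restricting to $\widetilde\ell_M$ follows by taking any good labelling of order $M$ (one exists by the GLP) and post-composing with the permutation of $\cA$ that matches it to $\widetilde\ell_M$ on $V_M^{\langle M\rangle}$; its uniqueness follows since any two such functions agree on $V_M^{\langle M\rangle}$, so the permutation relating them is the identity and they coincide.

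I expect the only genuine obstacle to be the bookkeeping around the Connectivity axiom: it is stated only for the $(-1)$-complexes inside $\mathcal{K}^{\langle0\rangle}$, so I would need a short self-similarity induction to upgrade it to connectivity of the $0$-complex adjacency graph of the whole unbounded fractal, while keeping track of the standard normalization placing an essential fixed point at the origin (so that $(\mathcal{K}^{\langle M\rangle})_{M\ge0}$ is increasing and $\mathcal{K}^{\langle0\rangle}$ is an honest $0$-complex). The rest --- the reduction to $M=0$, the rotation argument, and the final chain argument --- is routine.
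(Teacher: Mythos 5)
Your argument is correct: the key observation that the rotation $R_{\Delta_0}$ attached to a complex is pinned down by the image of a single shared vertex (since no nontrivial rotation of a regular $k$-gon about its barycenter fixes a vertex), combined with connectivity of the vertex-sharing adjacency graph of $0$-complexes, is exactly the standard rigidity-plus-propagation proof. Note that the present paper does not reprove this proposition but imports it from the cited reference, where the argument is essentially the one you give; the only step you leave as a sketch, upgrading the Connectivity axiom to connectivity of the $0$-complex adjacency graph of the whole unbounded fractal, is indeed a routine self-similarity induction and poses no obstacle.
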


\begin{proposition}{\cite[Proposition 3.2]{bib:KOPP}}
Let $\mathcal{K}^{\left\langle \infty \right\rangle}$ be a planar USNF, $M\in \mathbb Z$ and let ${\ell}_{M,0}: V^{\left\langle M \right\rangle}_{M} \to \mathcal{A}$ be a bijection. Then $\mathcal{K}^{\left\langle \infty \right\rangle}$ has the GLP if and only if there exists an extension of ${\ell}_{M,0}$ to $\widetilde{\ell}_{M,0}: V^{\left\langle M+1 \right\rangle}_{M} \to \mathcal{A}$ such that for every $M$-complex $\Delta_M \subset \mathcal{K}^{\left\langle M+1 \right\rangle}$ represented as (cf. Definition \ref{def:Mcomplex})
\begin{equation*}
\Delta_M = \mathcal{K}^{\left\langle M \right\rangle} + L^{M+1} \nu_{i_{M+1}}, \quad \nu_{i_{M+1}} \in \left\{\nu_1,...,\nu_N\right\},
\end{equation*}
there exists a rotation $R_{\Delta_M} \in \mathcal{R}_M$ such that
\begin{equation} \label{eq:aux_agr}
\widetilde{\ell}_{M,0}\left(R_{\Delta_M}\left(v - L^{M+1} \nu_{i_{M+1}}\right)\right) = \widetilde{\ell}_{M,0}(v), \quad v \in V\left(\Delta_M\right).
\end{equation}
\end{proposition}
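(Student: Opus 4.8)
The plan is to prove the two implications separately; the ``only if'' direction is immediate from the uniqueness statement recalled just above, since if $\mathcal{K}^{\langle \infty \rangle}$ has the GLP that statement extends the prescribed bijection $\ell_{M,0}$ to a good labelling function $\ell_M\colon V_M^{\langle \infty \rangle}\to\mathcal{A}$ of order $M$, and then $\widetilde{\ell}_{M,0}:=\ell_M|_{V_M^{\langle M+1 \rangle}}$ works, the required rotations being those from condition~(2) of Definition~\ref{def:glp} applied to the $M$-complexes inside $\mathcal{K}^{\langle M+1 \rangle}$. All the content is in the converse, which I would prove by extending $\widetilde{\ell}_{M,0}$ outward one scale at a time.

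After a harmless translation assume one essential fixed point is the origin and is fixed by $\Psi_1$, so $\nu_1=0$ and $\mathcal{K}^{\langle M \rangle}\subseteq\mathcal{K}^{\langle M+1 \rangle}\subseteq\cdots$; write $\delta_i:=\mathcal{K}^{\langle M \rangle}+L^{M+1}\nu_i$ for the $M$-complexes of $\mathcal{K}^{\langle M+1 \rangle}$, with rotations $R_{\delta_i}\in\mathcal{R}_M$ from the hypothesis (note $R_{\delta_1}=\mathrm{id}$, forced because $\widetilde{\ell}_{M,0}$ is a bijection on $V(\mathcal{K}^{\langle M \rangle})$). The inductive claim, for $m\geq 1$, is that $\widetilde{\ell}_{M,0}$ extends to $\ell^{(m)}\colon V_M^{\langle M+m \rangle}\to\mathcal{A}$ such that every $M$-complex contained in $\mathcal{K}^{\langle M+m \rangle}$ obeys~\eqref{eq:rot}; the base case $m=1$ is the hypothesis. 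For the step I would write $\mathcal{K}^{\langle M+m+1 \rangle}=\bigcup_{i=1}^{N}P_i$ with $P_i:=\mathcal{K}^{\langle M+m \rangle}+L^{M+m+1}\nu_i=L^{m}\delta_i$ (so $P_1=\mathcal{K}^{\langle M+m \rangle}$), take $\widehat{R}_{\delta_i}\in\mathcal{R}_{M+m}$ to be the rotation about $\mathcal{B}_{\mathcal{K}^{\langle M+m \rangle}}$ through the same angle as $R_{\delta_i}$, and define
\[
\ell^{(m+1)}(v):=\ell^{(m)}\!\left(\widehat{R}_{\delta_i}\!\left(v-L^{M+m+1}\nu_i\right)\right),\qquad v\in V_M(P_i).
\]
Because $\widehat{R}_{\delta_i}$ maps $\mathcal{K}^{\langle M+m \rangle}$ onto itself (Symmetry condition of Definition~\ref{def:snf}), the right-hand side is meaningful; on $P_1$ it is $\ell^{(m)}$, so $\ell^{(m+1)}$ extends $\ell^{(m)}$.

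It then remains to check two things. First, compatibility on overlaps: by the Nesting condition two pieces $P_i,P_j$ meet only in shared vertices, which are $L^{m}$ times the shared vertices of $\delta_i,\delta_j$, and for such a vertex $q=L^{m}p$ the identity $\widetilde{\ell}_{M,0}(R_{\delta_i}(p-L^{M+1}\nu_i))=\widetilde{\ell}_{M,0}(p)=\widetilde{\ell}_{M,0}(R_{\delta_j}(p-L^{M+1}\nu_j))$ holds just because $\widetilde{\ell}_{M,0}$ is single-valued; since $\ell_{M,0}$ is a \emph{bijection} of the $k$ vertices this forces the \emph{points} $R_{\delta_i}(p-L^{M+1}\nu_i)$ and $R_{\delta_j}(p-L^{M+1}\nu_j)$ in $V_M^{\langle M \rangle}$ to coincide, and rescaling by $L^{m}$ shows the two formulas for $\ell^{(m+1)}(q)$ agree. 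Second, the rotation property for new complexes: any $M$-complex $\Delta_M\subset\mathcal{K}^{\langle M+m+1 \rangle}$ lies in a single $P_i$, on which $\ell^{(m+1)}$ is $\ell^{(m)}$ precomposed with a fixed rigid motion, and composing $\widehat{R}_{\delta_i}$ with the rotation the induction hypothesis gives for the $M$-complex $\widehat{R}_{\delta_i}(\Delta_M-L^{M+m+1}\nu_i)\subset\mathcal{K}^{\langle M+m \rangle}$ yields an orientation-preserving self-isometry of the regular $k$-gon $V_M^{\langle M \rangle}$, hence an element of $\mathcal{R}_M$ by Proposition~\ref{wielokat_foremny}, witnessing~\eqref{eq:rot} for $\Delta_M$.

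Finally I would set $\ell_M:=\bigcup_{m\geq 1}\ell^{(m)}$ on $V_M^{\langle \infty \rangle}=\bigcup_m V_M^{\langle M+m \rangle}$ (well defined, the $\ell^{(m)}$ being nested); then $\ell_M|_{V_M^{\langle M \rangle}}=\ell_{M,0}$ is a bijection and every $M$-complex, being one of the pieces in the iterated self-similar decomposition of some $\mathcal{K}^{\langle J \rangle}$, sits inside some $\mathcal{K}^{\langle M+m \rangle}$ and so inherits~\eqref{eq:rot}, giving the GLP of order $M$. I expect the overlap compatibility in the inductive step to be the only genuine obstacle; the observation that unlocks it is that bijectivity of $\ell_{M,0}$ promotes ``equal labels'' to ``equal points'' at shared vertices --- a scale-invariant metric statement, which is exactly why the one-level hypothesis propagates to all higher levels. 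The remaining care is the bookkeeping that compositions of rotations about the various polygon barycenters (which, by the Symmetry condition and Proposition~\ref{wielokat_foremny}, are the centres of the corresponding regular $k$-gons) stay inside $\mathcal{R}_M$.
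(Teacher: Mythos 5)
Your argument is correct. Note that this proposition is quoted from \cite{bib:KOPP} (Proposition 3.2 there) and the present paper gives no proof of it, so there is no in-paper argument to compare against; your inductive propagation from level $M+1$ to level $M+m+1$ is the natural one, and you correctly isolate the two points that need care: consistency at the shared vertices of the pieces $P_i$, which you resolve by using bijectivity of $\ell_{M,0}$ on $V_M^{\langle M \rangle}$ to upgrade equality of labels to equality of points, and the fact that the composition of the scaled-up rotation $\widehat{R}_{\delta_i}$ with the rotation supplied by the inductive hypothesis induces an orientation-preserving self-map of the regular $k$-gon $V_M^{\langle M \rangle}$ and hence lies in $\mathcal{R}_M$.
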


This proposition means that if a labeling on $V_M^{\langle M\rangle}$ can be extended in a `good' way to $V_M^{\langle M+1\rangle}$,
then it can be extended as a good labelling also to $V_M^{\langle\infty\rangle}$.

Equivalently, due to self-similarity, it is sufficient to show that the labeling on $V_0^{\langle 0\rangle}$ can be extended to $V_0^{\langle 1\rangle}$.
In order to simplify the proofs in the next section, we will describe further results in terms of labeling of $V_0^{\langle 1\rangle}$.

Checking manually if the fractal has GLP may not be easy. In \cite{bib:KOPP} the following theorems were proved.

\begin{theorem} \cite[Theorems 3.1, 3.2]{bib:KOPP}
\label{thm:prime}
If $\# V_{0}^{\left\langle 0\right\rangle} = p$, where $p\geq 3$ is a prime number, then $\mathcal{K}^{\left\langle \infty \right\rangle}$ has the GLP.
\end{theorem}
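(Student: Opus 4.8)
The plan is to reduce the claim to a purely arithmetic/combinatorial statement about assigning labels to the $k$ vertices of a regular $k$-gon that appear as vertices of the $0$-complexes inside $\mathcal{K}^{\langle 1\rangle}$, and then exploit that $k=p$ is prime. By Proposition~\ref{wielokat_foremny} the vertices $V_0^{\langle 0\rangle}$ form a regular $k$-gon, and by the discussion following Definition~\ref{def:glp_gen} (Proposition 3.2 of \cite{bib:KOPP}), it suffices to fix a bijection $\ell_{0,0}\colon V_0^{\langle 0\rangle}\to\cA$ and produce an extension to $V_0^{\langle 1\rangle}$ such that on every $0$-complex $\Delta_0\subset\mathcal{K}^{\langle 1\rangle}$ the induced labelling agrees with $\ell_{0,0}\circ(\text{translation})$ up to one of the $k$ rotations $\mathcal R_0$ about the barycenter. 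So the whole problem is: can one consistently label all vertices of all $0$-complexes in the $1$-complex?

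First I would set up the ``label along edges'' viewpoint. Think of the labelling as a function on $V_{-1}^{\langle 0\rangle}=\bigcup_i\Psi_i(V_0^{\langle 0\rangle})$ (rescaled to $V_0^{\langle 1\rangle}$). Two neighbouring $0$-complexes in $\mathcal K^{\langle 1\rangle}$ meet, by the Nesting axiom, exactly in a shared set of vertices; the Symmetry axiom forces that when two $0$-complexes share vertices, the shared configuration is compatible with a reflection, and hence (composing two reflections) the ``transition'' from the rotation assigned to one complex to the rotation assigned to an adjacent complex is itself a rotation in the cyclic group $\mathbb Z/k\mathbb Z$ determined by the geometry of the shared vertices. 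The key structural fact I would isolate is: \emph{going around a closed cycle of adjacent $0$-complexes, the net rotation is trivial}; equivalently, the obstruction to a consistent choice of $\{R_{\Delta_0}\}$ lives in $H^1$ of the adjacency graph with coefficients in $\mathbb Z/k\mathbb Z$, and the only constraints come from the cycles, each contributing a relation of the form ``sum of edge-increments $\equiv 0 \pmod{\text{something dividing }k}$.'' Since $k=p$ is prime, $\mathbb Z/p\mathbb Z$ is a field with no proper nontrivial subgroups, so each such cyclic constraint either is automatically satisfied or forces a single increment to be $0$; propagating this along the connected graph (Connectivity axiom) yields a globally consistent assignment, hence a good labelling. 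I would phrase this as: start at one $0$-complex with the identity rotation, extend along a spanning tree of the adjacency graph (always possible, one complex at a time), and then check that every non-tree edge closes up — this last check is where primality of $k$ is used.

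The cleanest way to organize the primality argument is probably the one already hinted at by the paper's keywords (``cyclotomic polynomial''): encode a vertex of the $k$-gon by a $k$-th root of unity $\zeta^j$, encode the labelling of a complex by the ``offset'' $j$, and observe that the coincidence relations among shared vertices of adjacent complexes translate into linear relations among roots of unity; the set of integers $m$ for which $\zeta^m=1$ is forced is an ideal $(d)$ of $\mathbb Z$ with $d\mid k$, and the geometric rigidity (that complexes are genuine regular $k$-gons glued along vertices, with the edge graph connected) rules out $d<k$ unless $d=1$, which is the non-degenerate good-labelling case — and when $k=p$ this dichotomy is forced immediately since the only divisors are $1$ and $p$. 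I would do this computation once, carefully, for a single pair of adjacent $0$-complexes, extract the statement ``the rotation linking two adjacent complexes is well-defined in $\mathbb Z/p\mathbb Z$,'' and then the global statement follows by the spanning-tree/cycle argument of the previous paragraph.

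\textbf{Main obstacle.} The serious point is \emph{not} the algebra over $\mathbb Z/p\mathbb Z$, which is soft; it is establishing rigorously that the only relations imposed on the rotations $\{R_{\Delta_0}\}$ are the ``abelian'' cycle relations, i.e.\ that when several $0$-complexes meet at a single vertex the pairwise compatibility conditions do not secretly encode a non-abelian or stronger constraint. Concretely, one must show that the shared-vertex structure between adjacent $0$-complexes (governed by Nesting together with Symmetry) is exactly a reflection-type overlap, so that the composite transformations one accumulates around a loop land in the rotation subgroup and multiply commutatively; I expect this geometric lemma — essentially that the face-to-face gluing of the regular $k$-gons $\mathcal H(\Delta_0)$ inside $\mathcal H(\mathcal K^{\langle 1\rangle})$ is ``reflection-coherent'' — to be the technical heart, after which primality finishes the argument in a line.
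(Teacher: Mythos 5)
This theorem is not proved in the paper at all --- it is quoted from \cite{bib:KOPP} (Theorems 3.1, 3.2) --- so I am measuring your proposal against the natural proof that the paper's own machinery suggests (Theorem \ref{thm:odd} for odd $k$, combined with the root-of-unity cycle analysis used in the proof of Theorem \ref{thm:2^n}). Your overall framing is sound: reduce to extending a labeling from $V_0^{\langle 0\rangle}$ to $V_0^{\langle 1\rangle}$; observe that two adjacent $0$-complexes share a single vertex, so the relative rotation of labels between them is \emph{forced} (exactly one rotation of the label pattern on the neighbour gives the shared vertex a consistent label); and note that the only thing left to verify is that these forced increments sum to $0\bmod p$ around every cycle of the adjacency graph.

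The genuine gap is in how you claim primality closes that last step. Your argument that ``$\mathbb{Z}/p\mathbb{Z}$ has no proper nontrivial subgroups, so each cyclic constraint is either automatically satisfied or forces a single increment to be $0$, and one propagates along a spanning tree'' is a non sequitur: the edge increments are not free variables to be solved for, they are completely determined by the geometry, so there is nothing to choose or propagate --- each cycle relation simply holds or fails, and no subgroup structure of $\mathbb{Z}/p$ decides which. The ``ideal $(d)$ with $d\mid k$'' dichotomy in your cyclotomic paragraph is likewise not coherent as written ($d=1$, i.e.\ $\zeta=1$ forced, would be a contradiction, not the good case). What primality actually buys --- and what your proposal names but never carries out --- is an arithmetic fact about realizable cycles: a closed cycle of $0$-complexes forces the sum of the center-to-center displacement vectors to vanish, and by Fact 3.1 of \cite{bib:KOPP} these displacements are, up to a common positive scalar, roots of unity; for $k=p$ prime the minimal polynomial $\Phi_p(x)=1+x+\dots+x^{p-1}$ (degree $p-1$) pins down all vanishing integer combinations as multiples of the full symmetric sum, for which the corresponding sum of label increments is $\sum_{j=0}^{p-1} j = p(p-1)/2\equiv 0 \pmod p$ (using that $p$ is odd). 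Equivalently, in the language of Theorem \ref{thm:odd}, one must prove $p\mid(c-d)$ for every cycle, and that is exactly this computation. This, rather than the worry about non-abelian monodromy at a shared vertex (orientation-preserving isometries fixing a regular $p$-gon automatically lie in the cyclic rotation group), is the technical heart; until it is done the proof is incomplete.
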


	\begin{figure}[ht]
	\begin{center}
		\begin{tikzpicture}
			\coordinate (A) at (0,0);
			\path (A) ++(18:1cm) coordinate (0) node[above]{A};
			\path (A) ++(1*72+18:1cm) coordinate (1) node[below]{B};
			\path (A) ++(2*72+18:1cm) coordinate (2) node[left]{C};
			\path (A) ++(3*72+18:1cm) coordinate (3) node[below]{D};
			\path (A) ++(4*72+18:1cm) coordinate (4) node[below]{E};
			\filldraw[fill=lightgray]  (0) -- (1) -- (2) -- (3) -- (4) -- cycle;
			\path (A) ++(1*72+18:1cm) coordinate (1) node[below, yshift=-0.1cm]{B};
			\path (A) ++(-18:1cm)++(18:1cm) coordinate (B);
			\path (B) ++(1*72+18:1cm) coordinate (B1) node[above]{E};
			\path (B) ++(2*72+18:1cm) coordinate (B2);
			\path (B) ++(3*72+18:1cm) coordinate (B3) node[below]{B};
			\path (B) ++(4*72+18:1cm) coordinate (B4) node[below]{C};
			\path (B) ++(5*72+18:1cm) coordinate (B5) node[above]{D};
			\filldraw[fill=lightgray] (B1) -- (B2) -- (B3) -- (B4) -- (B5) -- cycle;
			\path (B) ++(-18:1cm)++(18:1cm) coordinate (C);
			\path (C) ++(1*72+18:1cm) coordinate (C1) node[below]{C};
			\path (C) ++(2*72+18:1cm) coordinate (C2);
			\path (C) ++(3*72+18:1cm) coordinate (C3) node[below]{E};
			\path (C) ++(4*72+18:1cm) coordinate (C4) node[below]{A};
			\path (C) ++(5*72+18:1cm) coordinate (C5) node[right]{B};
			\filldraw[fill=lightgray] (C1) -- (C2) -- (C3) -- (C4) -- (C5) -- cycle;
			\path (C) ++(1*72+18:1cm) coordinate (C1) node[below, yshift=-0.1cm]{C};
			\path (C) ++(90:1cm)++(54:1cm) coordinate (D);
			\path (D) ++(1*72+18:1cm) coordinate (D1);
			\path (D) ++(2*72+18:1cm) coordinate (D2) node[left]{B};
			\path (D) ++(3*72+18:1cm) coordinate (D3);
			\path (D) ++(4*72+18:1cm) coordinate (D4) node[right]{D};
			\path (D) ++(5*72+18:1cm) coordinate (D5) node[right]{E};
			\filldraw[fill=lightgray] (D1) -- (D2) -- (D3) -- (D4) -- (D5) -- cycle;
			\path (D) ++(1*72+18:1cm) coordinate (D1) node[below, yshift=-0.1cm]{A};
			\path (D) ++(90:1cm)++(54:1cm) coordinate (E);
			\path (E) ++(1*72+18:1cm) coordinate (E1) node[above]{D};
			\path (E) ++(2*72+18:1cm) coordinate (E2);
			\path (E) ++(3*72+18:1cm) coordinate (E3);
			\path (E) ++(4*72+18:1cm) coordinate (E4) node[right]{B};
			\path (E) ++(5*72+18:1cm) coordinate (E5) node[right]{C};
			\filldraw[fill=lightgray] (E1) -- (E2) -- (E3) -- (E4) -- (E5) -- cycle;
			\path (E) ++(2*72+18:1cm) coordinate (E2) node[below, xshift=-0.2cm]{E};
			\path (E) ++(162:1cm)++(126:1cm) coordinate (F);
			\path (F) ++(1*72+18:1cm) coordinate (F1) node[above]{B};
			\path (F) ++(2*72+18:1cm) coordinate (F2);
			\path (F) ++(3*72+18:1cm) coordinate (F3) node[below]{D};
			\path (F) ++(4*72+18:1cm) coordinate (F4);
			\path (F) ++(5*72+18:1cm) coordinate (F5) node[above]{A};
			\filldraw[fill=lightgray] (F1) -- (F2) -- (F3) -- (F4) -- (F5) -- cycle;
			\path (F) ++(2*72+18:1cm) coordinate (F2) node[below, xshift=-0.2cm]{C};
			\path (F) ++(162:1cm)++(126:1cm) coordinate (G);
			\path (G) ++(1*72+18:1cm) coordinate (G1) node[above]{E};
			\path (G) ++(2*72+18:1cm) coordinate (G2) node[above]{A};
			\path (G) ++(3*72+18:1cm) coordinate (G3);
			\path (G) ++(4*72+18:1cm) coordinate (G4);
			\path (G) ++(5*72+18:1cm) coordinate (G5) node[above]{D};
			\filldraw[fill=lightgray] (G1) -- (G2) -- (G3) -- (G4) -- (G5) -- cycle;
			\path (G) ++(18+3*72:1cm)++(198:1cm) coordinate (H);
			\path (H) ++(1*72+18:1cm) coordinate (H1) node[above]{C};
			\path (H) ++(2*72+18:1cm) coordinate (H2) node[above]{D};
			\path (H) ++(3*72+18:1cm) coordinate (H3);
			\path (H) ++(4*72+18:1cm) coordinate (H4)node[below]{A};
			\path (H) ++(5*72+18:1cm) coordinate (H5);
			\filldraw[fill=lightgray] (H1) -- (H2) -- (H3) -- (H4) -- (H5) -- cycle;
			\path (G) ++(3*72+18:1cm) coordinate (G3) node[below, xshift=0.2cm]{B};						
			\path (H) ++(18+3*72:1cm)++(198:1cm) coordinate (I);
			\path (I) ++(1*72+18:1cm) coordinate (I1) node[above]{A};
			\path (I) ++(2*72+18:1cm) coordinate (I2) node[left]{B};
			\path (I) ++(3*72+18:1cm) coordinate (I3) node[left]{C};
			\path (I) ++(4*72+18:1cm) coordinate (I4);
			\path (I) ++(5*72+18:1cm) coordinate (I5);
			\filldraw[fill=lightgray] (I1) -- (I2) -- (I3) -- (I4) -- (I5) -- cycle;
			\path (H) ++(3*72+18:1cm) coordinate (H3) node[below, xshift=0.2cm]{E};
			\path (I) ++(18-72:1cm)++(-90:1cm) coordinate (J);
			\path (J) ++(1*72+18:1cm) coordinate (J1);
			\path (J) ++(2*72+18:1cm) coordinate (J2) node[left]{E};
			\path (J) ++(3*72+18:1cm) coordinate (J3) node[left]{A};
			\path (J) ++(4*72+18:1cm) coordinate (J4);
			\path (J) ++(5*72+18:1cm) coordinate (J5) node[right]{C};
			\filldraw[fill=lightgray] (J1) -- (J2) -- (J3) -- (J4) -- (J5) -- cycle;
			\path (I) ++(4*72+18:1cm) coordinate (I4)node[below, yshift=-0.1cm]{D};
			\filldraw [black] (0) circle (1.5pt);
			\filldraw [black] (1) circle (1.5pt);
			\filldraw [black] (2) circle (1.5pt);
			\filldraw [black] (3) circle (1.5pt);
			\filldraw [black] (4) circle (1.5pt);
			\filldraw [black] (B1) circle (1.5pt);
			\filldraw [black] (B2) circle (1.5pt);
			\filldraw [black] (B3) circle (1.5pt);
			\filldraw [black] (B4) circle (1.5pt);
			\filldraw [black] (B5) circle (1.5pt);
			\filldraw [black] (C1) circle (1.5pt);
			\filldraw [black] (C2) circle (1.5pt);
			\filldraw [black] (C3) circle (1.5pt);
			\filldraw [black] (C4) circle (1.5pt);
			\filldraw [black] (C5) circle (1.5pt);
			\filldraw [black] (D1) circle (1.5pt);
			\filldraw [black] (D2) circle (1.5pt);
			\filldraw [black] (D3) circle (1.5pt);
			\filldraw [black] (D4) circle (1.5pt);
			\filldraw [black] (D5) circle (1.5pt);
			\filldraw [black] (E1) circle (1.5pt);
			\filldraw [black] (E2) circle (1.5pt);
			\filldraw [black] (E3) circle (1.5pt);
			\filldraw [black] (E4) circle (1.5pt);
			\filldraw [black] (E5) circle (1.5pt);
			\filldraw [black] (F1) circle (1.5pt);
			\filldraw [black] (F2) circle (1.5pt);
			\filldraw [black] (F3) circle (1.5pt);
			\filldraw [black] (F4) circle (1.5pt);
			\filldraw [black] (F5) circle (1.5pt);
			\filldraw [black] (G1) circle (1.5pt);
			\filldraw [black] (G2) circle (1.5pt);
			\filldraw [black] (G3) circle (1.5pt);
			\filldraw [black] (G4) circle (1.5pt);
			\filldraw [black] (G5) circle (1.5pt);
			\filldraw [black] (H1) circle (1.5pt);
			\filldraw [black] (H2) circle (1.5pt);
			\filldraw [black] (H3) circle (1.5pt);
			\filldraw [black] (H4) circle (1.5pt);
			\filldraw [black] (H5) circle (1.5pt);
			\filldraw [black] (I1) circle (1.5pt);
			\filldraw [black] (I2) circle (1.5pt);
			\filldraw [black] (I3) circle (1.5pt);
			\filldraw [black] (I4) circle (1.5pt);
			\filldraw [black] (I5) circle (1.5pt);
			\filldraw [black] (J1) circle (1.5pt);
			\filldraw [black] (J2) circle (1.5pt);
			\filldraw [black] (J3) circle (1.5pt);
			\filldraw [black] (J4) circle (1.5pt);
			\filldraw [black] (J5) circle (1.5pt);
		\end{tikzpicture}
	\end{center}
	\caption{Well labeled $\cK^{\langle 1 \rangle}$ in case of pentagonal complexes.}
	\end{figure}

\begin{theorem} \cite[Theorem 3.4]{bib:KOPP}
\label{thm:even}
If $\# V_{0}^{\left\langle 0\right\rangle}=k$, $k \geq 3$, is an even number, then $\mathcal{K}^{\left\langle \infty \right\rangle}$ has the  GLP if and only if the $0$-complexes inside the $1$-complex $\mathcal{K}^{\left\langle 1 \right\rangle}$ can be split into two disjoint classes such that each complex from one of the classes intersects only complexes from the other class.
\end{theorem}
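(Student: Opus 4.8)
The plan is to recast the existence of a good labelling as a $2$-colouring problem for a finite graph; the crux will be a rigidity statement describing how two neighbouring $0$-complexes of $\mathcal{K}^{\langle 1 \rangle}$ can touch when $k$ is even. By the extension criterion \cite[Proposition~3.2]{bib:KOPP}, $\mathcal{K}^{\langle \infty \rangle}$ has the GLP if and only if a fixed bijection $\ell_{0,0}\colon V_0^{\langle 0\rangle}\to\cA$ admits an extension $\widetilde\ell\colon V_0^{\langle 1\rangle}\to\cA$ whose restriction to every $0$-complex $\Delta\subset\mathcal{K}^{\langle 1 \rangle}$ is a rotation of $\ell_{0,0}$ about $\mathcal B_\Delta$. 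Number the $k$ vertices of the regular polygon $\mathcal{H}(\mathcal{K}^{\langle 0 \rangle})$ counter-clockwise by $\mathbb{Z}/k$. Since each $\Psi_i$ is an orientation-preserving homothety, every $0$-complex $\Delta\subset\mathcal{K}^{\langle 1 \rangle}$ is an equally oriented translate of $\mathcal{K}^{\langle 0 \rangle}$ and thus carries the same cyclic vertex numbering; transporting a rotation of $\mathcal{R}_0$ to $\Delta$ amounts to adding a fixed $c_\Delta\in\mathbb{Z}/k$ to all vertex numbers. Hence a candidate $\widetilde\ell$ is the same thing as a choice of a shift $c_\Delta\in\mathbb{Z}/k$ for every such $\Delta$, subject only to the requirement that the labels agree at vertices belonging to two complexes.

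The geometric heart of the proof is the following lemma, which I would establish first: \emph{if two distinct $0$-complexes $A,B\subset\mathcal{K}^{\langle 1 \rangle}$ intersect, then $A\cap B$ is a single point, which is a vertex of both, and it is vertex $p$ of $A$ and vertex $p+\tfrac{k}{2}$ of $B$ for some $p\in\mathbb{Z}/k$ (equivalently, it is the midpoint of $[\mathcal B_A,\mathcal B_B]$); in particular no three $0$-complexes meet at a point.} For the proof: Nesting forces $A\cap B$ to consist of common vertices, and $\mathcal{H}(A),\mathcal{H}(B)$ are equally oriented congruent regular $k$-gons with non-overlapping interiors; writing $\mathcal{H}(B)=\mathcal{H}(A)+t$, the fact that the hulls meet but have disjoint interiors gives $t\in\partial\big(\mathcal{H}(A)-\mathcal{H}(A)\big)$. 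As $k$ is even, $\mathcal{H}(\mathcal{K}^{\langle 0 \rangle})$ is centrally symmetric, so $\mathcal{H}(A)-\mathcal{H}(A)$ is $\mathcal{H}(A)$ dilated by $2$ about $\mathcal B_A$, whence $\mathcal B_A+\tfrac12 t=\tfrac12(\mathcal B_A+\mathcal B_B)$ is the midpoint of two vertices of $\mathcal{H}(A)$, say the $p$-th and the $(q+\tfrac{k}{2})$-th, and this midpoint lies on $\partial\mathcal{H}(A)$ only when those vertices coincide or are adjacent, i.e. $p-q\in\{\tfrac{k}{2}-1,\tfrac{k}{2},\tfrac{k}{2}+1\}$. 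Finally, in the two cases $p-q=\tfrac{k}{2}\pm1$ one checks that $A$ and $B$ are then forced to share a second vertex (an edge of the polygon is common to both complexes), which is impossible because distinct $0$-complexes of a simple nested fractal meet in at most one point. I expect this lemma to be the main obstacle: making the ``non-overlapping convex hulls'' and ``at most one common point'' inputs precise and running the polygon computation needs care, and it is exactly here that the evenness of $k$ is used --- for odd $k$ there is no antipodal vertex and $p-q$ is not determined, which is why the odd case requires a separate argument.

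With the lemma available, agreement of $\widetilde\ell$ at a vertex shared by $A$ and $B$ reads $p+c_A\equiv q+c_B\pmod{k}$, that is $c_B-c_A\equiv p-q\equiv\tfrac{k}{2}\pmod{k}$, a relation symmetric in $A,B$ since $-\tfrac{k}{2}\equiv\tfrac{k}{2}$. Let $G$ be the finite graph with one vertex per $0$-complex of $\mathcal{K}^{\langle 1 \rangle}$ and an edge for each intersecting pair; $G$ is connected by the Connectivity axiom. Then a good $\widetilde\ell$ exists if and only if there is a map $c\colon V(G)\to\mathbb{Z}/k$ with $c_B-c_A\equiv\tfrac{k}{2}$ along every edge. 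Because $\tfrac{k}{2}$ has order $2$ in $\mathbb{Z}/k$, summing this relation around a cycle of length $\ell$ yields $\ell\cdot\tfrac{k}{2}\pmod{k}$, which vanishes exactly when $\ell$ is even; hence such a $c$ exists iff $G$ has no odd cycle, i.e. iff $G$ is bipartite (when it is, fix a spanning tree, choose $c$ arbitrarily on it, and propagate, with consistency on the remaining edges guaranteed by bipartiteness). Any such $c$ determines a single-valued $\widetilde\ell$ on $V_0^{\langle 1\rangle}$ that restricts to a rotation of $\ell_{0,0}$ on the base complex, hence to a bijection onto $\cA$, so \cite[Proposition~3.2]{bib:KOPP} gives the GLP; conversely, the GLP produces via that proposition shifts $c_\Delta$ obeying the edge relations, which forces $G$ bipartite. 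Since bipartiteness of $G$ is precisely the possibility of splitting the $0$-complexes of $\mathcal{K}^{\langle 1 \rangle}$ into two classes with all intersections occurring between the two classes, which is the condition in the statement, both implications follow.
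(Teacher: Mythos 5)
This statement is quoted from \cite[Theorem 3.4]{bib:KOPP} and the present paper gives no proof of it, so there is no in-paper argument to compare against; judged on its own, your proposal is correct and is essentially the intended argument. Your reduction matches exactly how the paper itself later handles the even case: Lemma \ref{lem:cykle} recasts the two-class condition as bipartiteness of the adjacency graph of $0$-complexes, and the antipodal-contact geometry you derive (the shared point is vertex $p$ of one complex and vertex $p+\tfrac{k}{2}$ of its neighbour, i.e.\ the translation between barycenters is a longest diagonal) is precisely what the paper imports from \cite[Fact 3.1]{bib:KOPP}. The shift bookkeeping $c_B-c_A\equiv\tfrac{k}{2}$, the order-two observation, and the spanning-tree propagation are all sound, and your midpoint characterization does yield, as a free byproduct, that no three $0$-complexes share a vertex, so the resulting $\widetilde\ell$ is single-valued. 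The one soft spot is the last step of your geometric lemma: you rule out the cases $p-q=\tfrac{k}{2}\pm 1$ by invoking that distinct $0$-complexes meet in at most one point, but that fact is itself the nontrivial content of \cite[Fact 3.1]{bib:KOPP} (it does not follow immediately from the Nesting axiom, which only says the intersection is a set of common vertices), so as written this step is close to circular. This must genuinely be excluded --- if two complexes could share an edge, the two shift constraints at its endpoints would be incompatible and the equivalence in the theorem would fail --- so you should either cite \cite[Fact 3.1]{bib:KOPP} outright (as this paper does) or supply the symmetry-axiom argument that eliminates the edge-sharing configuration; with that reference in hand, your proof is complete.
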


	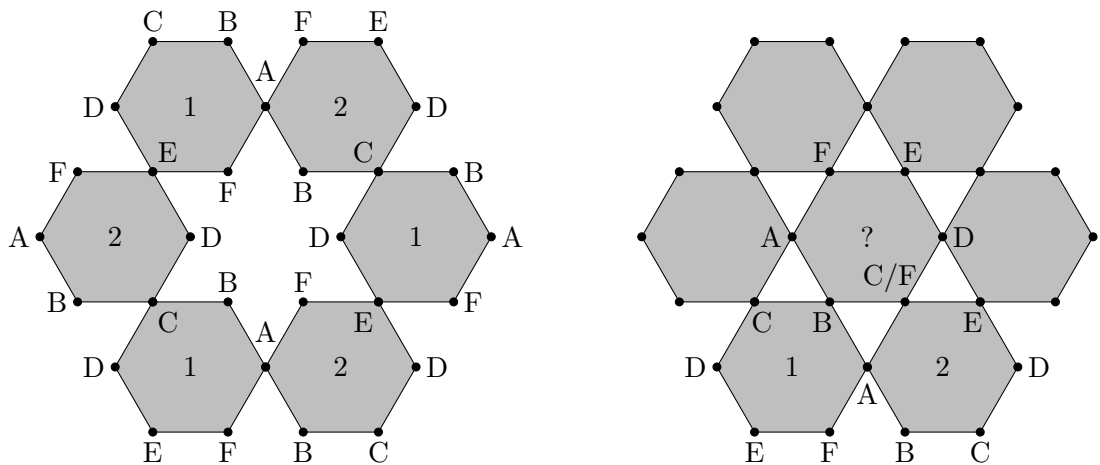
\begin{figure}[ht]
	\centering
	\begin{tikzpicture}
		\coordinate (A) at (0,0) node{1};
		\path (A) ++(0:1cm) coordinate (0) node[above,yshift=0.2cm]{A};
		\path (A) ++(1*60:1cm) coordinate (1) node[above]{B};
		\path (A) ++(2*60:1cm) coordinate (2) node[below,xshift=0.2cm]{C};
		\path (A) ++(3*60:1cm) coordinate (3) node[left]{D};
		\path (A) ++(4*60:1cm) coordinate (4) node[below]{E};
		\path (A) ++(5*60:1cm) coordinate (5) node[below]{F};
		\filldraw[fill=lightgray,draw=black] (0) -- (1) -- (2) -- (3) -- (4) -- (5) -- cycle;
		\coordinate (A) at (0,0) node{1};
		\path (A) ++(2*60:1cm) coordinate (2) node[below,xshift=0.2cm]{C};
		\path (A) ++(0:2cm) coordinate (B) node{2};
		\path (B) ++(0:1cm) coordinate (B0) node[right]{D};
		\path (B) ++(1*60:1cm) coordinate (B1) node[below,xshift=-0.2cm]{E};
		\path (B) ++(2*60:1cm) coordinate (B2) node[above]{F};
		\path (B) ++(3*60:1cm) coordinate (B3);
		\path (B) ++(4*60:1cm) coordinate (B4) node[below]{B};
		\path (B) ++(5*60:1cm) coordinate (B5) node[below]{C};
		\filldraw[fill=lightgray,draw=black] (B0) -- (B1) -- (B2) -- (B3) -- (B4) -- (B5) -- cycle;
		\path (B) ++(1*60:1cm) coordinate (B1) node[below,xshift=-0.2cm]{E};
		\path (A) ++(0:2cm) coordinate (B) node{2};
		\path (B) ++(60:2cm) coordinate (C) node{1};
		\path (C) ++(0:1cm) coordinate (C0) node[right]{A};
		\path (C) ++(1*60:1cm) coordinate (C1) node[right]{B};
		\path (C) ++(2*60:1cm) coordinate (C2) node[above,xshift=-0.2cm]{C};
		\path (C) ++(3*60:1cm) coordinate (C3)  node[left]{D};
		\path (C) ++(4*60:1cm) coordinate (C4);
		\path (C) ++(5*60:1cm) coordinate (C5) node[right]{F};
		\filldraw[fill=lightgray,draw=black] (C0) -- (C1) -- (C2) -- (C3) -- (C4) -- (C5) -- cycle;
		\path (B) ++(60:2cm) coordinate (C) node{1};
		\path (C) ++(120:2cm) coordinate (D) node{2};
		\path (D) ++(0:1cm) coordinate (D0) node[right]{D};
		\path (D) ++(1*60:1cm) coordinate (D1) node[above]{E};
		\path (D) ++(2*60:1cm) coordinate (D2) node[above]{F};
		\path (D) ++(3*60:1cm) coordinate (D3)  node[above,yshift=0.2cm]{A};
		\path (D) ++(4*60:1cm) coordinate (D4) node[below]{B};
		\path (D) ++(5*60:1cm) coordinate (D5);
		\filldraw[fill=lightgray,draw=black] (D0) -- (D1) -- (D2) -- (D3) -- (D4) -- (D5) -- cycle;
		\path (C) ++(2*60:1cm) coordinate (C2) node[above,xshift=-0.2cm]{C};
		\path (C) ++(120:2cm) coordinate (D) node{2};
		\path (D) ++(180:2cm) coordinate (E) node{1};
		\path (E) ++(0:1cm) coordinate (E0);
		\path (E) ++(1*60:1cm) coordinate (E1) node[above]{B};
		\path (E) ++(2*60:1cm) coordinate (E2) node[above]{C};
		\path (E) ++(3*60:1cm) coordinate (E3)  node[left]{D};
		\path (E) ++(4*60:1cm) coordinate (E4) node[above,xshift=0.2cm]{E};
		\path (E) ++(5*60:1cm) coordinate (E5) node[below]{F};
		\filldraw[fill=lightgray,draw=black] (E0) -- (E1) -- (E2) -- (E3) -- (E4) -- (E5) -- cycle;
		\path (D) ++(180:2cm) coordinate (E) node{1};
		\path (E) ++(4*60:1cm) coordinate (E4) node[above,xshift=0.2cm]{E};
		\path (E) ++(240:2cm) coordinate (F) node{2};
		\path (F) ++(0:1cm) coordinate (F0) node[right]{D};
		\path (F) ++(1*60:1cm) coordinate (F1);
		\path (F) ++(2*60:1cm) coordinate (F2) node[left]{F};
		\path (F) ++(3*60:1cm) coordinate (F3)  node[left]{A};
		\path (F) ++(4*60:1cm) coordinate (F4) node[left]{B};
		\path (F) ++(5*60:1cm) coordinate (F5);
		\filldraw[fill=lightgray,draw=black] (F0) -- (F1) -- (F2) -- (F3) -- (F4) -- (F5) -- cycle;		
		\path (E) ++(240:2cm) coordinate (F) node{2};
		\filldraw [black] (0) circle (1.5pt);
		\filldraw [black] (1) circle (1.5pt);
		\filldraw [black] (2) circle (1.5pt);
		\filldraw [black] (3) circle (1.5pt);
		\filldraw [black] (4) circle (1.5pt);
		\filldraw [black] (5) circle (1.5pt);
		\filldraw [black] (B1) circle (1.5pt);
		\filldraw [black] (B2) circle (1.5pt);
		\filldraw [black] (B3) circle (1.5pt);
		\filldraw [black] (B4) circle (1.5pt);
		\filldraw [black] (B0) circle (1.5pt);
		\filldraw [black] (B5) circle (1.5pt);
		\filldraw [black] (C0) circle (1.5pt);
		\filldraw [black] (C1) circle (1.5pt);
		\filldraw [black] (C2) circle (1.5pt);
		\filldraw [black] (C3) circle (1.5pt);
		\filldraw [black] (C4) circle (1.5pt);
		\filldraw [black] (C5) circle (1.5pt);
		\filldraw [black] (D0) circle (1.5pt);
		\filldraw [black] (D1) circle (1.5pt);
		\filldraw [black] (D2) circle (1.5pt);
		\filldraw [black] (D3) circle (1.5pt);
		\filldraw [black] (D4) circle (1.5pt);
		\filldraw [black] (D5) circle (1.5pt);
		\filldraw [black] (E0) circle (1.5pt);
		\filldraw [black] (E1) circle (1.5pt);
		\filldraw [black] (E2) circle (1.5pt);
		\filldraw [black] (E3) circle (1.5pt);
		\filldraw [black] (E4) circle (1.5pt);
		\filldraw [black] (E5) circle (1.5pt);
		\filldraw [black] (F0) circle (1.5pt);
		\filldraw [black] (F1) circle (1.5pt);
		\filldraw [black] (F2) circle (1.5pt);
		\filldraw [black] (F3) circle (1.5pt);
		\filldraw [black] (F4) circle (1.5pt);
		\filldraw [black] (F5) circle (1.5pt);	
		
		\coordinate (1A) at (8,0);
		\path (1A) ++(0:1cm) ++(180:1cm) node{1};
		\path (1A) ++(0:1cm) coordinate (10) node[below, yshift=-0.1cm]{A};
		\path (1A) ++(1*60:1cm) coordinate (11);
		\path (1A) ++(2*60:1cm) coordinate (12) node[below, xshift=0.1cm]{C};
		\path (1A) ++(3*60:1cm) coordinate (13) node[left]{D};
		\path (1A) ++(4*60:1cm) coordinate (14) node[below]{E};
		\path (1A) ++(5*60:1cm) coordinate (15) node[below]{F};
		\filldraw[fill=lightgray,draw=black] (10) -- (11) -- (12) -- (13) -- (14) -- (15) -- cycle;
		\path (1A) ++(0:1cm) ++(180:1cm) node{1};
		\path (1A) ++(2*60:1cm) coordinate (12) node[below, xshift=0.1cm]{C};
		\path (1A) ++(0:2cm) coordinate (1B) node{2};
		\path (1B) ++(0:1cm) coordinate (1B0) node[right]{D};
		\path (1B) ++(1*60:1cm) coordinate (1B1) node[below, xshift=-0.1cm]{E};
		\path (1B) ++(2*60:1cm) coordinate (1B2) node[above, xshift=-0.2cm]{C/F};
		\path (1B) ++(3*60:1cm) coordinate (1B3);
		\path (1B) ++(4*60:1cm) coordinate (1B4) node[below]{B};
		\path (1B) ++(5*60:1cm) coordinate (1B5) node[below]{C};
		\filldraw[fill=lightgray,draw=black] (1B0) -- (1B1) -- (1B2) -- (1B3) -- (1B4) -- (1B5) -- cycle;
		\path (1B) ++(1*60:1cm) coordinate (1B1) node[below, xshift=-0.1cm]{E};
		\path (1A) ++(0:2cm) coordinate (1B) node{2};
		\path (1B) ++(60:2cm) coordinate (1C);
		\path (1C) ++(0:1cm) coordinate (1C0);
		\path (1C) ++(1*60:1cm) coordinate (1C1) ;
		\path (1C) ++(2*60:1cm) coordinate (1C2) ;
		\path (1C) ++(3*60:1cm) coordinate (1C3)  ;
		\path (1C) ++(4*60:1cm) coordinate (1C4);
		\path (1C) ++(5*60:1cm) coordinate (1C5) ;
		\filldraw[fill=lightgray,draw=black] (1C0) -- (1C1) -- (1C2) -- (1C3) -- (1C4) -- (1C5) -- cycle;
		\path (1C) ++(120:2cm) coordinate (1D) ;
		\path (1D) ++(0:1cm) coordinate (1D0) ;
		\path (1D) ++(1*60:1cm) coordinate (1D1) ;
		\path (1D) ++(2*60:1cm) coordinate (1D2) ;
		\path (1D) ++(3*60:1cm) coordinate (1D3)  ;
		\path (1D) ++(4*60:1cm) coordinate (1D4) ;
		\path (1D) ++(5*60:1cm) coordinate (1D5);
		\filldraw[fill=lightgray,draw=black] (1D0) -- (1D1) -- (1D2) -- (1D3) -- (1D4) -- (1D5) -- cycle;
		\path (1D) ++(180:2cm) coordinate (1E) ;
		\path (1E) ++(0:1cm) coordinate (1E0);
		\path (1E) ++(1*60:1cm) coordinate (1E1) ;
		\path (1E) ++(2*60:1cm) coordinate (1E2) ;
		\path (1E) ++(3*60:1cm) coordinate (1E3);
		\path (1E) ++(4*60:1cm) coordinate (1E4) ;
		\path (1E) ++(5*60:1cm) coordinate (1E5) ;
		\filldraw[fill=lightgray,draw=black] (1E0) -- (1E1) -- (1E2) -- (1E3) -- (1E4) -- (1E5) -- cycle;
		\path (1E) ++(240:2cm) coordinate (1F) ;
		\path (1F) ++(0:1cm) coordinate (1F0) ;
		\path (1F) ++(1*60:1cm) coordinate (1F1);
		\path (1F) ++(2*60:1cm) coordinate (1F2) ;
		\path (1F) ++(3*60:1cm) coordinate (1F3)  ;
		\path (1F) ++(4*60:1cm) coordinate (1F4) ;
		\path (1F) ++(5*60:1cm) coordinate (1F5);
		\filldraw[fill=lightgray,draw=black] (1F0) -- (1F1) -- (1F2) -- (1F3) -- (1F4) -- (1F5) -- cycle;
		\path (1F) ++(0:2cm) coordinate (1G) node{?};
		\path (1G) ++(0:1cm) coordinate (1G0) node[right]{D};
		\path (1G) ++(1*60:1cm) coordinate (1G1) node[above, xshift=0.1cm]{E};
		\path (1G) ++(2*60:1cm) coordinate (1G2) node[above, xshift=-0.1cm]{F};
		\path (1G) ++(3*60:1cm) coordinate (1G3)  node[left]{A};
		\path (1G) ++(4*60:1cm) coordinate (1G4) node[below, xshift=-0.1cm]{B};
		\path (1G) ++(5*60:1cm) coordinate (1G5) ;
		\filldraw[fill=lightgray,draw=black] (1G0) -- (1G1) -- (1G2) -- (1G3) -- (1G4) -- (1G5) -- cycle;
		\path (1F) ++(0:2cm) coordinate (1G) node{?};	
		\path (1B) ++(2*60:1cm) coordinate (1B2) node[above, xshift=-0.2cm]{C/F};			
		\filldraw [black] (10) circle (1.5pt);
		\filldraw [black] (11) circle (1.5pt);
		\filldraw [black] (12) circle (1.5pt);
		\filldraw [black] (13) circle (1.5pt);
		\filldraw [black] (14) circle (1.5pt);
		\filldraw [black] (15) circle (1.5pt);
		\filldraw [black] (1B1) circle (1.5pt);
		\filldraw [black] (1B2) circle (1.5pt);
		\filldraw [black] (1B3) circle (1.5pt);
		\filldraw [black] (1B4) circle (1.5pt);
		\filldraw [black] (1B0) circle (1.5pt);
		\filldraw [black] (1B5) circle (1.5pt);
		\filldraw [black] (1C0) circle (1.5pt);
		\filldraw [black] (1C1) circle (1.5pt);
		\filldraw [black] (1C2) circle (1.5pt);
		\filldraw [black] (1C3) circle (1.5pt);
		\filldraw [black] (1C4) circle (1.5pt);
		\filldraw [black] (1C5) circle (1.5pt);
		\filldraw [black] (1D0) circle (1.5pt);
		\filldraw [black] (1D1) circle (1.5pt);
		\filldraw [black] (1D2) circle (1.5pt);
		\filldraw [black] (1D3) circle (1.5pt);
		\filldraw [black] (1D4) circle (1.5pt);
		\filldraw [black] (1D5) circle (1.5pt);
		\filldraw [black] (1E0) circle (1.5pt);
		\filldraw [black] (1E1) circle (1.5pt);
		\filldraw [black] (1E2) circle (1.5pt);
		\filldraw [black] (1E3) circle (1.5pt);
		\filldraw [black] (1E4) circle (1.5pt);
		\filldraw [black] (1E5) circle (1.5pt);
		\filldraw [black] (1F0) circle (1.5pt);
		\filldraw [black] (1F1) circle (1.5pt);
		\filldraw [black] (1F2) circle (1.5pt);
		\filldraw [black] (1F3) circle (1.5pt);
		\filldraw [black] (1F4) circle (1.5pt);
		\filldraw [black] (1F5) circle (1.5pt);					
	\end{tikzpicture}
	\caption{Example of labeling and division into classes for the Sierpi\'nski Hexagon and an attempt of labeling of the Lindstr\o m Snowflake.}\label{fig:hexagon}
	\end{figure}

The Sierpi\'nski Hexagon has the GLP. Its vertices can be labeled $A$-$F$ as in the Figure \ref{fig:hexagon} and its complexes can be divided into classes $1$-$2$. The labeling of the Lindstr\o m Snowflake is impossible. In the Figure \ref{fig:hexagon} the middle complex cannot be in the first, nor in the second class. The vertex $C/F$ can have only one label, hence the good labeling function satisfying the Definition \ref{def:glp} does not exist.

\begin{theorem} \cite[Theorem 3.3]{bib:KOPP}
\label{thm:cycle}
If {$k \geq 3$} and there are no inessential fixed points of the similitudes generating $\mathcal{K}^{\langle 0\rangle},$ i.e. $k = N$,
then $\mathcal{K}^{\left\langle \infty \right\rangle}$ has the GLP.
\end{theorem}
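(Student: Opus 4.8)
The plan is to verify the GLP through the order‑zero reduction recorded just after Proposition 3.2: by self‑similarity it is enough to extend a good labelling from the vertices of $\mathcal{K}^{\langle 0 \rangle}$ to $V_0^{\langle 1 \rangle}$, i.e.\ to label the vertices of the $0$‑complexes contained in $\mathcal{K}^{\langle 1 \rangle}$ so that on each of them the labelling is the base labelling composed with one of the $k$ rotations of $\mathcal{R}_0$, while the restriction to $V(\mathcal{K}^{\langle 1 \rangle})$ remains a bijection. First I would unpack $k=N$. Each $\Psi_i(x)=x/L+\nu_i$ is a homothety of ratio $1/L$ with a unique fixed point $v_i$ (one has $\Psi_i(x)-v_i=(x-v_i)/L$ and $L\nu_i=(L-1)v_i$), and, since there are no inessential fixed points, $i\mapsto v_i$ is a bijection onto $V_0^{\langle 0 \rangle}=\{v_1,\dots,v_k\}$. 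Scaling $\mathcal{K}^{\langle 0 \rangle}=\bigcup_i\Psi_i(\mathcal{K}^{\langle 0 \rangle})$ by $L$, the $0$‑complexes inside $\mathcal{K}^{\langle 1 \rangle}$ are precisely $\mathcal{C}_i:=\mathcal{K}^{\langle 0 \rangle}+L\nu_i$, $i=1,\dots,k$, each a \emph{pure translate} of $\mathcal{K}^{\langle 0 \rangle}$, so $V(\mathcal{C}_i)=\{v_m+L\nu_i:m=1,\dots,k\}$. Placing the barycentre of $\mathcal{K}^{\langle 0 \rangle}$ at the origin, all $v_m$ have the same length, and $Lv_m=v_p+L\nu_i$ forces $|Lv_m-(L-1)v_i|=|v_p|$, hence $L^2+(L-1)^2-2L(L-1)\cos\angle(v_m,v_i)=1$, hence $\cos\angle(v_m,v_i)=1$ and $m=i$; thus $Lv_i=v_i+L\nu_i$ is the only vertex of $\mathcal{K}^{\langle 1 \rangle}$ among the vertices of $\mathcal{C}_i$ (it is the ``$i$‑th'' vertex of $\mathcal{C}_i$), so the $0$‑complexes inside $\mathcal{K}^{\langle 1 \rangle}$ are canonically indexed by the vertices of $\mathcal{K}^{\langle 1 \rangle}$, one apiece.

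Identify $\mathcal{A}$ with $\mathbb{Z}/k\mathbb{Z}$ and fix $\ell_0(v_m)=m$. Since each $\mathcal{C}_i$ is an unrotated translate, a good extension is exactly a choice of rotation index $c_i\in\mathbb{Z}/k\mathbb{Z}$ for each $i$, with $\ell(v_m+L\nu_i)=m+c_i$; then $\ell(Lv_i)=i+c_i$, the top‑level labelling is a bijection iff $i\mapsto i+c_i$ is a permutation, and $\ell$ is well defined iff $m+c_i=n+c_j$ whenever $\mathcal{C}_i,\mathcal{C}_j$ share a vertex $v_m+L\nu_i=v_n+L\nu_j$. To analyse the shared vertices I would use the Symmetry and Nesting axioms: the perpendicular bisector of $[v_i,v_j]$ is a symmetry axis of $\mathcal{K}^{\langle 1 \rangle}$ interchanging $\mathcal{C}_i$ and $\mathcal{C}_j$ and acting on the underlying regular $k$‑gon by $v_l\mapsto v_{i+j-l}$, while Nesting forces $\mathcal{C}_i\cap\mathcal{C}_j$ to be a set of common vertices permuted by that reflection. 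A geometric analysis (the hard part, see below) should show that $\mathcal{C}_i$ meets exactly two other complexes, namely $\mathcal{C}_{i\pm d_0}$ for a fixed step $d_0$ with $\gcd(d_0,k)=1$ (the coprimality being what Connectivity provides), in a single vertex each, and that this vertex lies on the corresponding bisector; then $m+n\equiv i+j$ for it, and invariance of everything under the rotation by $2\pi/k$ makes its position inside $\mathcal{C}_i$ equal to $i+\mu$ for a \emph{constant} $\mu$. The compatibility condition then collapses to $c_i-c_{i+d_0}\equiv d_0-2\mu$, the same for every $i$; since the $0$‑complexes form a single $k$‑cycle under neighbour adjacency this is automatically consistent (the sum around the cycle is $-2k\mu\equiv 0$), and solving it gives, because $\gcd(d_0,k)=1$, that $i\mapsto\ell(Lv_i)$ is an affine function of $i$ whose linear part is $2\mu$ times the unit $d_0^{-1}$ of $\mathbb{Z}/k\mathbb{Z}$.

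It remains to see that this affine map is a bijection, equivalently that $\gcd(2\mu,k)=1$. Writing out that the shared vertex $(L-1)v_i+v_m$ lies on the bisector of $[v_i,v_j]$ yields the identity $\sin\!\big(\tfrac{\pi(2\mu-d_0)}{k}\big)=(L-1)\sin\!\big(\tfrac{\pi d_0}{k}\big)$, which pins $L$ down; moreover $Lv_i$ is a vertex of no complex other than $\mathcal{C}_i$, so the shared vertex differs from $Lv_i$ and $\mu\not\equiv 0$. If $k$ is even, then $d_0$ is odd, the $0$‑complexes two‑colour by the parity of their index, and $\mathcal{K}^{\langle \infty \rangle}$ has the GLP by Theorem \ref{thm:even}. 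If $k$ is odd, then $\gcd(2,k)=1$, so it suffices to show $\gcd(\mu,k)=1$, which I would read off from the displayed identity together with the size constraints that make the corner‑homothety configuration a genuine SNF. In either case the affine labelling exists, and by the order‑zero reduction $\mathcal{K}^{\langle \infty \rangle}$ has the GLP.

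The step I expect to be the main obstacle is the geometric–combinatorial analysis invoked above: proving that the $0$‑complexes inside $\mathcal{K}^{\langle 1 \rangle}$ meet pairwise only as consecutive members of a single $k$‑cycle, only in one vertex, and that this vertex lies on a symmetry axis (ruling out in particular two complexes meeting in two vertices both fixed by the reflection, which is the one way consistency could fail), and then extracting $\gcd(\mu,k)=1$ for odd $k$. This is precisely where the Open Set Condition, Nesting, Symmetry and Connectivity must all be used together on the rigid but not entirely transparent structure of a fractal whose generators are all corner homotheties.
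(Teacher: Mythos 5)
There is a genuine gap here, and also one step that is outright false. The false step is your requirement that the induced labelling of the corners of $\mathcal{K}^{\langle 1\rangle}$, namely $i\mapsto\ell(Lv_i)=i+c_i$, be a bijection, together with the ensuing goal $\gcd(2\mu,k)=1$. Neither Definition \ref{def:glp} nor the extension criterion quoted from \cite{bib:KOPP} (Proposition 3.2 there) imposes any condition on $V_1^{\langle 1\rangle}$: a good labelling of order $0$ only has to restrict to a bijection on $V_0^{\langle 0\rangle}$ and to be a rotated copy of the base labelling on every $0$-complex. Worse, the condition you set out to prove fails in examples covered by the theorem: for the Sierpi\'nski hexagon ($k=N=6$, $L=3$, $d_0=1$) the shared vertex of $\mathcal{C}_i$ and $\mathcal{C}_{i+1}$ is $v_i+v_{i+1}=v_{i+2}+2v_i=v_{i+2}+L\nu_i$, so $\mu=2$ and $\gcd(2\mu,k)=2$; correspondingly, in Figure \ref{fig:hexagon} the corner labels of $\mathcal{K}^{\langle 1\rangle}$ repeat with period three, and yet this fractal has the GLP. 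So the entire last third of your argument (in particular the plan to ``read off'' $\gcd(\mu,k)=1$ for odd $k$ from the trigonometric identity) is aimed at a statement that is both unnecessary and, in its even-$k$ form, false; the top-level bijectivity condition should simply be dropped.

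The genuine gap is the geometric--combinatorial core, which you explicitly defer: that the $k$ translated complexes $\mathcal{C}_i$ meet pairwise only as consecutive members of a single $k$-cycle with step $d_0$ coprime to $k$, in exactly one common vertex, and that this vertex lies on the perpendicular bisector of $[v_i,v_j]$. Everything else you do (consistency of the shifts $c_i$ around the cycle, the two-colouring for even $k$) hinges on this, and it is precisely where the Open Set, Nesting, Symmetry and Connectivity axioms must be combined; as written the proposal is therefore a plan rather than a proof. For comparison: the present paper does not reprove this theorem (it is imported from \cite{bib:KOPP}), but it points out that for odd $k$ it is a direct consequence of Theorem \ref{thm:odd} --- once the single-cycle structure is granted, invariance under rotation by $2\pi/k$ forces all $k$ steps of a traversal of the ring to be rotations by the same angle $\tfrac{k\pm1}{k}\pi$, so every cycle has $(c,d)$ a multiple of $(k,0)$ or $(0,k)$ and $k\mid(c-d)$ holds trivially; for even $k$ the alternating two-colouring of the ring plus Theorem \ref{thm:even} finishes. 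That route needs no modular bookkeeping with $\mu$ at all.
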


Summing up, it was known that all fractals with $k$ prime or $k=N$ have GLP. Moreover we knew the characterization of GLP for fractals with $k$ even.

We still have to analyze fractals with $k$ odd, composite, and $N>k$. We will focus on them in the next chapter.

\section{New results}

\subsection{Characterization of GLP for fractals with odd $k$}
	 
	 Theorem \ref{thm:odd} is the counterpart of Theorem \ref{thm:even} for fractals with odd $k$. It gives characterization of GLP and conditions this property on the geometric structure of complexes. For odd $k$ we can get Theorem \ref{thm:cycle} as a special case of Theorem \ref{thm:odd}.
	 
	 In the next theorem as a cycle of 0-complexes we will understand a sequence of adjacent 0-complexes $\Delta_{0,1},\Delta_{0,2},\dots,\Delta_{0,j}$, for some $j$, (all distinct) such that the last $ \Delta_{0,j}$ is adjacent to the first $\Delta_{0,1}$.
	 
		We say that a rotation by angle $\alpha$ (counter-clockwise) occurs in a cycle if the next complex arises from the previous one by rotation by the angle $\alpha$ around their common vertex (Figure \ref{obrót}). From Fact 3.1 in \cite{bib:KOPP} we know that for odd $k$ only rotations by $\frac{k+1}{k} \pi$ and $\frac{k-1}{k} \pi$ may occur in the cycle.

	\begin{center}
		\begin{figure}[h]
		\centering
		\begin{tikzpicture}
			\coordinate (A) at (0,0);
			\path (A) ++(18:1cm) coordinate (0) node[above]{A};
			\path (A) ++(1*72+18:1cm) coordinate (1) node[above]{B};
			\path (A) ++(2*72+18:1cm) coordinate (2) node[left]{C};
			\path (A) ++(3*72+18:1cm) coordinate (3) node[below]{D};
			\path (A) ++(4*72+18:1cm) coordinate (4) node[below]{E};
			\filldraw[fill=lightgray] (0) -- (1) -- (2) -- (3) -- (4) -- cycle;
			\path (0) ++(-18:1cm) coordinate (B);
			\path (B) ++(1*72+18:1cm) coordinate (5) node[above]{E};
			\path (B) ++(2*72+18:1cm) coordinate (6);
			\path (B) ++(3*72+18:1cm) coordinate (7) node[below]{B};
			\path (B) ++(4*72+18:1cm) coordinate (8) node[below]{C};
			\path (B) ++(5*72+18:1cm) coordinate (9) node[right]{D};
			\filldraw[fill=lightgray] (5) -- (6) -- (7) -- (8) --  (9) -- cycle;
			\draw[->] (0) + (-108:0.5cm) arc (-108:36:0.5cm) node[pos=0.5,right]{$ \frac{4}{5} \pi$};
			\filldraw [black] (0) circle (1.5pt);
			\filldraw [black] (1) circle (1.5pt);
			\filldraw [black] (2) circle (1.5pt);
			\filldraw [black] (3) circle (1.5pt);
			\filldraw [black] (4) circle (1.5pt);
			\filldraw [black] (5) circle (1.5pt);
			\filldraw [black] (6) circle (1.5pt);
			\filldraw [black] (7) circle (1.5pt);
			\filldraw [black] (8) circle (1.5pt);
			\filldraw [black] (9) circle (1.5pt);
			
			\coordinate (C) at (6,0);
			\path (C) ++(18:1cm) coordinate (B0) node[left]{A};
			\path (C) ++(1*72+18:1cm) coordinate (B1) node[above]{B};
			\path (C) ++(2*72+18:1cm) coordinate (B2) node[left]{C};
			\path (C) ++(3*72+18:1cm) coordinate (B3) node[below]{D};
			\path (C) ++(4*72+18:1cm) coordinate (B4) node[below]{E};
			\filldraw[fill=lightgray] (B0) -- (B1) -- (B2) -- (B3) -- (B4) -- cycle;
			
			\path (B0) ++(54:1cm) coordinate (D);
			\path (D) ++(18:1cm) coordinate (C0) node[right]{C};
			\path (D) ++(1*72+18:1cm) coordinate (C1) node[above]{D};
			\path (D) ++(2*72+18:1cm) coordinate (C2) node[above]{E};
			\path (D) ++(3*72+18:1cm) coordinate (C3);
			\path (D) ++(4*72+18:1cm) coordinate (C4) node[below]{B};
			\filldraw[fill=lightgray] (C0) -- (C1) -- (C2) -- (C3) -- (C4) -- cycle;
			\draw[->] (B0) + (-108:0.5cm) arc (-108:108:0.5cm) node[pos=0.7,right]{$ \frac{6}{5} \pi$};
			\filldraw [black] (B0) circle (1.5pt);
			\filldraw [black] (B1) circle (1.5pt);
			\filldraw [black] (B2) circle (1.5pt);
			\filldraw [black] (B3) circle (1.5pt);
			\filldraw [black] (B4) circle (1.5pt);
			\filldraw [black] (C0) circle (1.5pt);
			\filldraw [black] (C1) circle (1.5pt);
			\filldraw [black] (C2) circle (1.5pt);
			\filldraw [black] (C3) circle (1.5pt);
			\filldraw [black] (C4) circle (1.5pt);
			\path (C) ++(18:1cm) coordinate (B0) node[left]{A};
		\end{tikzpicture}
		\caption{Rotation of 0-complex by two possible angles when $ k = 5$.}\label{obrót}
		\end{figure}
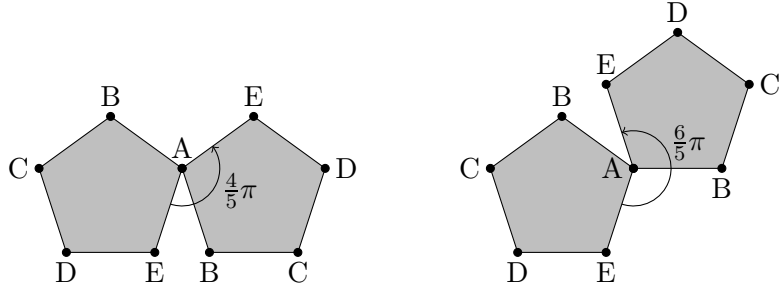
	\end{center}
	\begin{theorem}
	\label{thm:odd}
		For $\mathcal{K}^{\langle\infty\rangle}$ with an odd number of essential fixed points $k$, the following conditions are equivalent. 
		\begin{itemize}
			\item[(A)] The fractal $\mathcal{K}^{\langle\infty\rangle}$ has GLP.
			\item[(B)] Any cycle of complexes $\Delta_0$ in $\mathcal{K}^{\langle 1\rangle}$, consisting of $c$ rotations by the angle $\frac{k+1}{k}\pi$ and $d$ times by the angle $\frac{k-1}{k}\pi$ satisfies $k|(c-d)$. 
		\end{itemize} 
	\end{theorem}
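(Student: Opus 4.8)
\emph{Proof plan.}
The plan is to pass, through the reduction to $V_0^{\langle 1\rangle}$ recalled above (see \cite[Proposition 3.2]{bib:KOPP}), from the existence of a good labelling to the solvability of a linear system over $\mathbb Z/k\mathbb Z$ on the adjacency graph of the $0$-complexes of $\cK^{\langle 1\rangle}$, and then to identify (B) as exactly the cycle condition governing that solvability.

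First, I would fix once and for all a cyclic numbering of $V_0^{\langle 0\rangle}$ compatible with $\mathcal R_0$ (so $R_\ell$ sends the vertex of index $j$ to the one of index $j+\ell$), and a bijection $\phi\colon V_0^{\langle 0\rangle}\to\cA$. Since each $\Psi_i$ is a homothety followed by a translation, every $0$-complex $\Delta\subset\cK^{\langle 1\rangle}$ is a translate $\cK^{\langle 0\rangle}+L\nu_i$ of $\cK^{\langle 0\rangle}$, so its vertices inherit the numbering (the vertex of index $j$ being $u_j+L\nu_i$, with $u_j$ the $j$-th vertex of $\cK^{\langle 0\rangle}$), and a rotation $R_\Delta\in\mathcal R_0$ is just an element $s(\Delta)\in\mathbb Z/k\mathbb Z$. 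By \cite[Proposition 3.2]{bib:KOPP}, $\cK^{\langle\infty\rangle}$ has the GLP iff the prescription $v\mapsto\phi\big(\operatorname{ind}_\Xi(v)+s(\Xi)\big)$, made complex by complex, defines a well-defined map on $V_0^{\langle 1\rangle}$ for a suitable choice of offsets $s(\Xi)\in\mathbb Z/k\mathbb Z$; and since two $0$-complexes $\Delta,\Delta'$ meeting at a common vertex $p$ force
\[
s(\Delta')-s(\Delta)\ \equiv\ \operatorname{ind}_\Delta(p)-\operatorname{ind}_{\Delta'}(p)\pmod k,
\]
the GLP is equivalent to the existence of $s$ realising these prescribed differences along the edges of the adjacency graph $G$ of the $0$-complexes of $\cK^{\langle 1\rangle}$. (One should check here that, $k$ being odd, two equally oriented regular $k$-gons share at most one vertex, so that ``the common vertex'' and $\operatorname{ind}_{\Delta'}(p)$ make sense: they cannot share an edge without reversing orientation, and, by Nesting, their intersection lies in their vertex sets.) As $G$ is connected (the Connectivity axiom, rescaled by $L$), the standard coboundary criterion applies: such an $s$ exists iff the prescribed differences sum to $0$ in $\mathbb Z/k\mathbb Z$ around every closed walk in $G$, and since $\mathbb Z/k\mathbb Z$ is abelian it is enough to test the simple cycles, which are exactly the cycles in the statement of the theorem.

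It then remains to evaluate the cycle sums. When $\Delta'$ arises from $\Delta$ by the rotation about $p$ of angle $\tfrac{k\pm1}{k}\pi$ --- the only possibilities by Fact~3.1 of \cite{bib:KOPP} --- I would write that angle as $\tfrac{2\pi}{k}\cdot\tfrac{k\pm1}{2}$, an integer multiple of the symmetry angle $\tfrac{2\pi}{k}$ because $k$ is odd, and then compute that $\Delta'=\Delta+(w-w')$, where $w=p$ and $w'$ is the vertex of $\Delta$ of index $\operatorname{ind}_\Delta(p)+\tfrac{k\pm1}{2}$; hence $\operatorname{ind}_{\Delta'}(p)=\operatorname{ind}_\Delta(p)+\tfrac{k\pm1}{2}$ and the edge $\Delta\to\Delta'$ contributes $-\tfrac{k\pm1}{2}$ to the cycle sum. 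Using $\tfrac{k-1}{2}\equiv-\tfrac{k+1}{2}\pmod k$, a cycle with $c$ rotations by $\tfrac{k+1}{k}\pi$ and $d$ by $\tfrac{k-1}{k}\pi$ has cycle sum $\equiv\tfrac{k+1}{2}\,(d-c)\pmod k$. Finally, $k$ odd gives $2\cdot\tfrac{k+1}{2}\equiv1\pmod k$, so $\tfrac{k+1}{2}$ is a unit modulo $k$, and therefore $\tfrac{k+1}{2}(d-c)\equiv 0\pmod k$ is equivalent to $k\mid(c-d)$. Concatenating the equivalences proves (A)$\iff$(B): the direction (A)$\Rightarrow$(B) uses only that the offsets of a good labelling telescope around a cycle, and (B)$\Rightarrow$(A) uses the connectedness of $G$ to reconstruct $s$ from the vanishing of the cycle sums.

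The main obstacle I anticipate is the geometric dictionary ``rotation about $p$ by $\tfrac{k\pm1}{k}\pi$ $\longleftrightarrow$ index shift by $\tfrac{k\pm1}{2}$ at $p$'', together with the bookkeeping that makes the final count orientation-free (reversing the traversal of a cycle swaps the two rotation types and the counts $c,d$, hence negates $c-d$, so $k\mid(c-d)$ is unambiguous), and the verification that adjacent $0$-complexes meet in exactly one point for odd $k$. The remaining ingredients --- the coboundary criterion for $\mathbb Z/k\mathbb Z$-valued prescribed differences on a connected graph, and the unit computation modulo $k$ --- are routine.
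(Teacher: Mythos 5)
Your proposal is correct and follows essentially the same route as the paper: both reduce GLP to a consistency condition for the rotation of labels around cycles of adjacent $0$-complexes and identify that condition with $k\mid(c-d)$ (your unit computation with $\tfrac{k+1}{2}$ modulo $k$ is just the multiplicative form of the paper's check that $c\tfrac{k+1}{k}\pi+d\tfrac{k-1}{k}\pi\in 2\pi\mathbb{Z}$). Your reformulation via $\mathbb{Z}/k\mathbb{Z}$-valued offsets and the coboundary criterion on the connected adjacency graph is a cleaner, more rigorous rendering of the paper's informal ``propagate the labels along adjacencies and check that cycles close up'' argument, but it is not a genuinely different proof.
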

	\begin{proof}
		$(A)\Rightarrow(B)$ Since $\mathcal{K}^{\langle\infty\rangle}$ has GLP, then of course $\mathcal{K}^{\langle1\rangle}$ and any its subset can be well labelled. Hence, any cycle of rotation of labels "going back" to the starting 0-complex must coincide with it, otherwise $ \mathcal{K}^{\langle \infty \rangle}$ would not have GLP.
		
		That is, for any cycle with a fixed number of rotations $ c $ and $ d $, there is an $ n $ such that holds
		$$
		c\frac{k+1}{k}\pi+d\frac{k-1}{k}\pi=2\pi n.
		$$
		After few simple transformations, we get
		$$
		c+d+\frac{c-d}{k}=2n.
		$$
		$(c+d)$ and $ 2n $ are integers, therefore the left side is also an integer if and only if  $k|(c-d)$.\\
		 $(A)\Leftarrow(B)$ Suppose the following condition holds for a given cycle of complexes
		$$
		c\frac{k+1}{k}\pi+d\frac{k-1}{k}\pi=2\pi n.
		$$
		This cycle can be well labeled - we label vertices of the complex $\Delta_{0,1}$ and then recursively having labeled vertices of $\Delta_{0,i}$ for some $ i $ we can rotate the labels to $\Delta_{0,i+1}$ analogously to the rotation of the whole complex. Thanks to the assumption, we know that the labels after the rotation of $\Delta_{0,j}$ on $\Delta_{0,1}$ will match those originally assigned to the vertices of $\Delta_{0,1}$. Thus, indeed the complex vertices in the cycle can be well labeled.
		
		In the next step, we label the vertices of the remaining complexes as follows: select the complex $\Delta_0 \subset \mathcal{K}^{\langle 1\rangle}$ with unlabeled vertices, which is adjacent to some labeled complex $\Delta_0^{'}$. $\Delta_0$ is created by rotating $\Delta_0^{'}$ by the angle $\frac{k+1}{k} \pi$ or $\frac{k-1}{k} \pi$ and analogously to this rotation, we rewrite the labels from $\Delta_0^{'}$ on $\Delta_0$. Meeting the assumption guarantees that each vertex will receive exactly one label (because whole cycles can be labeled in one way). Moreover, since the labels on each cell are arranged in the same orientation, the condition of good labeling properties will be met, that is the fractal has GLP.
	\end{proof}

	\subsection{Reduction of the test area for GLP verification}
	For larger number of 0-complexes in $\mathcal{K}^{\langle 1\rangle}$ the verification of GLP on $\mathcal{K}^{\langle 1\rangle}$ using the characterization theorems can be time-consuming. We will show that the area of GLP verification can be reduced to "two $ k $-th" of $\mathcal{K}^{\langle 1\rangle}$.

	\begin{definition}
		Let $\mathcal{K}^{\langle 1\rangle}$ have $k$ essential fixed points.
		The bounded area $U_i$ of the fractal $\mathcal{K}^{\langle 1\rangle}$ is the sum of sets
		\begin{itemize}
		\item the angle (without rays) of measure $\frac{2\pi}{k}$ and vertex $ \mathcal{B}_{\mathcal{K}^{\langle 1\rangle}} $, where rays are the half-lines included in axes of symmetry of $\mathcal{K}^{\langle 1\rangle}$ which pass through vertices of the complex,
		\item half-line without the initial point being the left edge (from $ \mathcal{B}_{\mathcal{K}^{\langle 1\rangle}}$ point of view) of the angle above, that is half of one of two axes of symmetry bounding that angle
		\end{itemize}
		intersected with $\mathcal{K}^{\langle 1 \rangle}$.

		The area $U_i$ does not contain $\mathcal{B}_{\mathcal{K}^{\langle 1\rangle}}$, therefore $\bigcup_{j=1}^{k} U_j= \mathcal{K}^{\langle 1\rangle}\backslash \mathcal{B}_{\mathcal{K}^{\langle 1\rangle}}$ and that sum is disjoint.
	
	The fractal slice $W_i$ is defined as the sum of those cells $\Delta_0 \subset \mathcal{K}^{\langle 1 \rangle}$ which centers $\mathcal{B}_{\Delta_0}$ lie in the area $ U_i $.
	\end{definition}
	\begin{figure}[H]
	\begin{center}
	\begin{tikzpicture}[scale=2]
		\coordinate (A) at (0,0);
		\path (A) ++(0:1cm) coordinate (0);
		\path (0) ++(1*2*180/7:1cm) coordinate (1);
		\path (1) ++(2*2*180/7:1cm) coordinate (2) ;
		\path (2) ++(3*2*180/7:1cm) coordinate (3);
		\path (3) ++(4*2*180/7:1cm) coordinate (4) ;
		\path (4) ++(5*2*180/7:1cm) coordinate (5) ;
		\path (5) ++(6*2*180/7:1cm) coordinate (6) ;
		
		\path (A) ++(5*180/14:1.152382435481243252620575111773427556722250943803160581031553148cm) coordinate (B);
		\path (B) ++(90:1.7cm) coordinate (B1);
		\path (B) ++(90+ 1*180/7:1.7cm) coordinate (B2);
		\path (B) ++(90+ 2*180/7:1.7cm) coordinate (B3);
		\path (B) ++(90+ 3*180/7:1.7cm) coordinate (B4);
		\path (B) ++(90+ 4*180/7:1.7cm) coordinate (B5);
		\path (B) ++(90+ 5*180/7:1.7cm) coordinate (B6);
		\path (B) ++(90+ 6*180/7:1.7cm) coordinate (B7);
		\path (B) ++(90+ 7*180/7:1.7cm) coordinate (B8) node[below]{$U_1$};
		\path (B) ++(90+ 8*180/7:1.7cm) coordinate (B9);
		\path (B) ++(90+ 9*180/7:1.7cm) coordinate (B10);
		\path (B) ++(90+ 10*180/7:1.7cm) coordinate (B11);
		\path (B) ++(90+ 11*180/7:1.7cm) coordinate (B12);
		\path (B) ++(90+ 12*180/7:1.7cm) coordinate (B13);
		\path (B) ++(90+ 13*180/7:1.7cm) coordinate (B14);
		
		\foreach \x in {0,1,2,3,4,5,6}
		{
			\filldraw [black] (\x) circle (1pt);
		}

		\draw[color=orange] (B9)--(B);
		\filldraw[fill=orange!20,draw=orange!20] (B) -- (B7) arc (90+ 6*180/7:90+ 8*180/7:1.7cm) -- cycle; 
		\draw[orange] (B9)--(B);
		\draw[dashed] (B9)--(B);
		
		\draw[dashed] (B1)--(B);
		\draw[dashed] (B2)--(B);
		\draw[dashed] (B3)--(B);
		\draw[dashed] (B4)--(B);
		\draw[dashed] (B5)--(B);
		\draw[dashed] (B6)--(B);
		\draw[white] (B7)--(B);
		\draw[dashed] (B7)--(B);
		\draw[dashed] (B8)--(B);
		\draw[dashed] (B10)--(B);
		\draw[dashed] (B11)--(B);
		\draw[dashed] (B12)--(B);
		\draw[dashed] (B13)--(B);
		\draw[dashed] (B14)--(B);
		\draw (0) -- (1) -- (2) -- (3) -- (4) -- (5) -- (6) --cycle;
		\filldraw[fill=white,draw=black] (B)++(.4mm,0) arc (0:360:0.4mm) ;
		\node[right,yshift=0.4cm] at (B) {\contour{white}{$ \mathcal{B}_{\mathcal{K}^{\langle 1\rangle}} $}};
	\end{tikzpicture}
	\end{center}
	\caption{Area $ U_1 $ in $ \mathcal{K}^{\langle 1\rangle} $ for $ k$ equal 7.}\label{fig:obszarui}
	\end{figure}
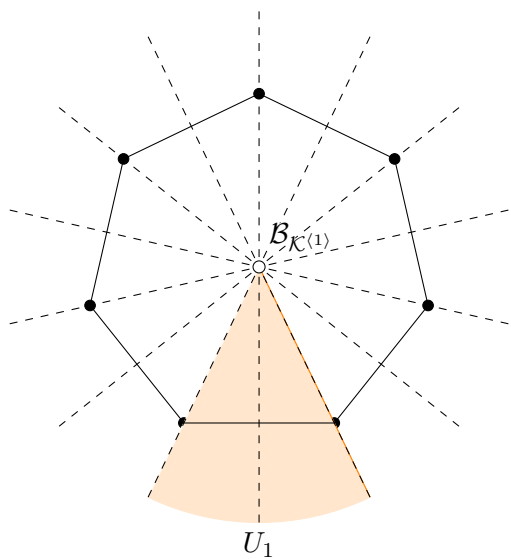
	
	It can be shown that $ \bigcup_{j = 1}^{k}W_j = \mathcal{K}^{\langle 1 \rangle} $ holds for $ k> 4 $ and $ k \neq 6 $, because in this case $ \mathcal{B}_{\mathcal{K}^{\langle 1 \rangle}} $ does not belong to $ \mathcal{K}^{\langle 1 \rangle} $. This fact results from the Lemma \ref{D0 w BK1} in the further part of the work.

	\begin{theorem}\label{tw:niep}
	If 
	\begin{enumerate}
	\item[1)] $k\neq 6$

	or

	\item[2)] $k=6$, but $\mathcal{B}_{\mathcal{K}^{\langle 1\rangle}} \notin \mathcal{K}^{\langle 1\rangle} $,
	\end{enumerate}
	then $ \mathcal{K}^{\langle \infty \rangle} $ has GLP if and only if it is possible to well label vertices in two neighboring slices $ W_i $, $ W_{i+1}$ of the fractal $ \mathcal{K}^{\langle 1 \rangle}. $
	\end{theorem}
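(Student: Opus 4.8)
\emph{Plan.} The reverse implication is immediate: if $\mathcal{K}^{\langle\infty\rangle}$ has GLP, then by \cite[Proposition~3.2]{bib:KOPP} there is a good labelling of $\mathcal{K}^{\langle 1\rangle}$, and its restriction to the vertices of the complexes lying in $W_i\cup W_{i+1}$ well labels those two slices. Moreover, if $k\in\{3,4\}$ the fractal has GLP unconditionally (Theorems~\ref{thm:prime} and \ref{thm:2^n}), so both sides of the asserted equivalence hold automatically; hence from now on I would assume $k\geq 5$, in which case --- whether $k\neq 6$, or $k=6$ with no central complex --- Lemma~\ref{D0 w BK1} gives $\mathcal{B}_{\mathcal{K}^{\langle 1\rangle}}\notin\mathcal{K}^{\langle 1\rangle}$, so $\bigcup_{j}W_{j}=\mathcal{K}^{\langle 1\rangle}$ and no $0$-complex is centred at $\mathcal{B}_{\mathcal{K}^{\langle 1\rangle}}$. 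It remains to deduce GLP from a well labelling of $W_i\cup W_{i+1}$.

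I would first recast ``well labelling'' algebraically. Every $0$-complex inside $\mathcal{K}^{\langle 1\rangle}$ is of the form $\mathcal{K}^{\langle 0\rangle}+L\nu_i$, i.e.\ a pure translate of $\mathcal{K}^{\langle 0\rangle}$, so the $k$ vertices of each such complex carry a canonical cyclic numbering $0,1,\dots,k-1$ transported from a fixed counter-clockwise numbering of $V_0^{\langle 0\rangle}$. In the light of Definition~\ref{def:glp}, a well labelling of a family $\mathcal{C}$ of such complexes is then exactly an assignment $s\colon\mathcal{C}\to\mathbb{Z}/k\mathbb{Z}$ such that whenever $\Delta,\Delta'\in\mathcal{C}$ share a vertex that is the $m$-th vertex of $\Delta$ and the $m'$-th of $\Delta'$, one has $s(\Delta')-s(\Delta)\equiv m-m'=:c(\Delta,\Delta')\pmod k$; such $s$ exists iff the (antisymmetric) weight $c$ has vanishing sum around every cycle of the adjacency graph of $\mathcal{C}$, and it is unique up to an additive constant when $\mathcal{C}$ is connected. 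For $\mathcal{C}$ the set of all $0$-complexes of $\mathcal{K}^{\langle 1\rangle}$ the existence of $s$ is, by \cite[Proposition~3.2]{bib:KOPP}, equivalent to GLP. The key geometric remark is that the rotation $\rho$ by $2\pi/k$ about $\mathcal{B}_{\mathcal{K}^{\langle 1\rangle}}$ --- a symmetry of $\mathcal{K}^{\langle 1\rangle}$, since $\mathcal{K}^{\langle 0\rangle}$ is invariant under all the reflections $S_{x,y}$ of Definition~\ref{def:snf} and hence under the rotations these generate --- permutes the $0$-complexes, sends $W_j$ onto $W_{j+1}$, and, being a rotation by exactly one ``vertex step'', carries the $m$-th vertex of $\Delta$ to the $(m{+}1)$-st vertex of $\rho(\Delta)$; consequently $c(\rho\Delta,\rho\Delta')=c(\Delta,\Delta')$.

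Now start from a well labelling of $W_i\cup W_{i+1}$, recorded as $s_0$. Using $\rho$-invariance of $c$ together with connectedness of $W_i$ (which I expect to read off from the Connectivity condition of Definition~\ref{def:snf} and Lemma~\ref{D0 w BK1}), the element $\varepsilon:=s_0(\rho\Delta)-s_0(\Delta)\in\mathbb{Z}/k\mathbb{Z}$ is independent of $\Delta\subset W_i$: join $\Delta$ to $\Delta'$ by an adjacency path inside $W_i$, push it forward by $\rho$ to a path inside $W_{i+1}\subset W_i\cup W_{i+1}$, and compare $c$-sums. Define $s$ on $\mathcal{K}^{\langle 1\rangle}$ by $s(\rho^{j}\Delta_a):=s_0(\Delta_a)+j\varepsilon$ for $\Delta_a\subset W_i$, $j\in\mathbb{Z}$; this is well defined because the slices cover $\mathcal{K}^{\langle 1\rangle}$, $\rho^{j}$ maps the complexes of $W_i$ bijectively onto those of $W_{i+j}$, and the value is insensitive to $j$ modulo $k$ since $k\varepsilon\equiv 0$. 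One then verifies $s(\Delta')-s(\Delta)\equiv c(\Delta,\Delta')$ for every adjacent pair: if $\Delta,\Delta'$ lie in a common slice $W_{i+j}$ it is the same statement for $s_0$ on $W_i$ transported by $\rho^{j}$; if they lie in cyclically consecutive slices $W_{i+j},W_{i+j+1}$ --- the wrap-around seam $W_{i-1}$--$W_i$ included --- one applies $\rho^{-j}$ to reduce to an adjacency between a complex of $W_i$ and a complex of $W_{i+1}$, which $s_0$ governs, the correction $\varepsilon$ accounting for the change of $s_0$ under $\rho$ across that seam and the identity $\varepsilon\equiv-(k-1)\varepsilon$ making the wrap-around seam behave like the others. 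Thus $s$ is a global potential for $c$, $\mathcal{K}^{\langle 1\rangle}$ is well labellable, and $\mathcal{K}^{\langle\infty\rangle}$ has GLP by \cite[Proposition~3.2]{bib:KOPP}.

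The step I expect to be the main obstacle is the geometric input used in the last verification: \emph{any two adjacent $0$-complexes of $\mathcal{K}^{\langle 1\rangle}$ lie in one slice or in two cyclically consecutive slices}, i.e.\ no adjacency skips a slice. This is precisely what makes the two cases exhaustive, and it is where $\mathcal{B}_{\mathcal{K}^{\langle 1\rangle}}\notin\mathcal{K}^{\langle 1\rangle}$ is genuinely used --- a central complex would be adjacent to complexes in \emph{all} slices simultaneously and would break the scheme, which is exactly the hexagonal case the hypothesis excludes. I would derive this claim from the description of the sectors $U_i$ via the symmetry axes of $\mathcal{K}^{\langle 1\rangle}$ together with the location statements of Lemma~\ref{D0 w BK1}. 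A subsidiary point is the connectedness of an individual slice $W_i$ used for $\varepsilon$; should some slice fail to be connected, one runs the $\varepsilon$-computation with adjacency paths inside $W_i\cup W_{i+1}$, keeping track of how often such a path crosses the $W_i$--$W_{i+1}$ seam.
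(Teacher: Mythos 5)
Your proposal is correct and follows essentially the same route as the paper: the reverse direction is restriction, and the forward direction exploits the $2\pi/k$-rotation symmetry of $\mathcal{K}^{\langle 1\rangle}$ (Lemma \ref{lem:niezmienniczosc na obroty}) to show that the labelling of $W_i\cup W_{i+1}$ determines a single shift ($\varepsilon$ in your notation, $r$ in the paper's) which propagates the labelling slice by slice and closes up because $k\varepsilon\equiv 0\pmod k$. Your cocycle/potential formulation on the adjacency graph of complexes is just a repackaging of the paper's argument with label differences $r_i$ at seam vertices, and the two geometric facts you flag as the main obstacles (no adjacency skips a slice; no complex sits at or touches the barycenter) are exactly what the paper supplies via Lemmas \ref{a} and \ref{D0 w BK1}.
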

	For $ k = 6 $ we can construct $ \mathcal{K}^{\langle 1 \rangle} $ such that for some 0-complex $ \Delta_0 $ its center $ \mathcal {B}_{\Delta_ {0 }} $ would be the point $ \mathcal{B}_{\mathcal{K}^{\langle 1 \rangle}}$, which would mean that the selected $\Delta_{0} $ would not be in any slice $ W_i $.
	In this case, $ \mathcal{K}^{\langle \infty \rangle} $ has no GLP, because the connectivity condition forces the 0-complex in $ \mathcal{B}_{\mathcal{K}^{ \langle 1 \rangle}} $ to be adjacent to some other, and the symmetry condition forces it to have exactly six neighbors.
	Using the Theorem \ref {thm:even} we show that $\mathcal{K}^{\langle 1 \rangle} $ cannot be well labeled. The Figure \ref{ziel} shows our reasoning.
	  \begin{figure}[H]
	  	\centering
	  	\begin{tikzpicture}[scale=0.93]
	  		\coordinate (00) at (0,0);
	  		\path (00)++(-120:2cm) coordinate(AA);
	  		\foreach \x in {0,1,2,3,4,5}
	  		{
	  			\path (00) ++(\x*60 :1cm) coordinate (\x);
	  		}
	  		\filldraw[fill=lightgray,draw=black] (0) -- (1) -- (2) -- (3) -- (4) -- (5) -- cycle;
  		
	  		\foreach \x in {0,1,2,3,4,5}
	  		{
	  			\path (AA) ++(\x*60 :1cm) coordinate (A\x);
	  		}
	  		\filldraw[fill=brown,draw=black] (A0) -- (A1) -- (A2) -- (A3) -- (A4) -- (A5) -- cycle;
	  		
	  		\foreach \x in {0,1,2,3,4,5}
	  		{
	  			\filldraw [black] (\x) circle (1.5pt);
	  		}
  			\foreach \x in {0,2,3,4,5}
  			{
  				\filldraw [black] (A\x) circle (1.5pt);
  			}
  			
  			\foreach \x in {0,1,2,3,...,11}
  			{
  				\draw[dashed] (00)--(\x*30:3.5cm);
  			}
  			\node[right,yshift=0.2cm,xshift=-0.05cm, xscale=0.8, yscale=0.8] at (00) {\contour{lightgray}{$ \mathcal{B}_{\mathcal{K}^{\langle 1\rangle}} $}};
  			\filldraw [black] (00) circle (2.5pt);
  		
	  		\coordinate (1A) at (8,-sqrt(3);
	  		\path (1A) ++(0:1cm) ++(180:1cm) node{1};
	  		\path (1A) ++(0:1cm) coordinate (10);
	  		\path (1A) ++(1*60:1cm) coordinate (11);
	  		\path (1A) ++(2*60:1cm) coordinate (12) ;
	  		\path (1A) ++(3*60:1cm) coordinate (13);
	  		\path (1A) ++(4*60:1cm) coordinate (14) ;
	  		\path (1A) ++(5*60:1cm) coordinate (15);
	  		\filldraw[fill=brown,draw=black] (10) -- (11) -- (12) -- (13) -- (14) -- (15) -- cycle;
	  		\path (1A) ++(0:1cm) ++(180:1cm) node[left]{1};
	  		\path (1A) ++(2*60:1cm) coordinate (12) ;
	  		\path (1A) ++(0:2cm) coordinate (1B) ;
	  		\path (1B) ++(0:1cm) coordinate (1B0) ;
	  		\path (1B) ++(1*60:1cm) coordinate (1B1) ;
	  		\path (1B) ++(2*60:1cm) coordinate (1B2);
	  		\path (1B) ++(3*60:1cm) coordinate (1B3);
	  		\path (1B) ++(4*60:1cm) coordinate (1B4) ;
	  		\path (1B) ++(5*60:1cm) coordinate (1B5);
	  		\filldraw[fill=lightgray,draw=black] (1B0) -- (1B1) -- (1B2) -- (1B3) -- (1B4) -- (1B5) -- cycle;
	  		\path (1B) ++(1*60:1cm) coordinate (1B1) ;
	  		\path (1A) ++(0:2cm) coordinate (1B)  node[right]{2};
	  		\path (1B) ++(60:2cm) coordinate (1C);
	  		\path (1C) ++(0:1cm) coordinate (1C0);
	  		\path (1C) ++(1*60:1cm) coordinate (1C1) ;
	  		\path (1C) ++(2*60:1cm) coordinate (1C2) ;
	  		\path (1C) ++(3*60:1cm) coordinate (1C3)  ;
	  		\path (1C) ++(4*60:1cm) coordinate (1C4);
	  		\path (1C) ++(5*60:1cm) coordinate (1C5) ;
	  		\filldraw[fill=lightgray,draw=black] (1C0) -- (1C1) -- (1C2) -- (1C3) -- (1C4) -- (1C5) -- cycle;
	  		\path (1B) ++(60:2cm) coordinate (1C) node[above]{1};
	  		\path (1C) ++(120:2cm) coordinate (1D) ;
	  		\path (1D) ++(0:1cm) coordinate (1D0) ;
	  		\path (1D) ++(1*60:1cm) coordinate (1D1) ;
	  		\path (1D) ++(2*60:1cm) coordinate (1D2) ;
	  		\path (1D) ++(3*60:1cm) coordinate (1D3)  ;
	  		\path (1D) ++(4*60:1cm) coordinate (1D4) ;
	  		\path (1D) ++(5*60:1cm) coordinate (1D5);
	  		\filldraw[fill=lightgray,draw=black] (1D0) -- (1D1) -- (1D2) -- (1D3) -- (1D4) -- (1D5) -- cycle;
	  		\path (1C) ++(120:2cm) coordinate (1D) node[left]{2};
	  		\path (1D) ++(180:2cm) coordinate (1E) ;
	  		\path (1E) ++(0:1cm) coordinate (1E0);
	  		\path (1E) ++(1*60:1cm) coordinate (1E1) ;
	  		\path (1E) ++(2*60:1cm) coordinate (1E2) ;
	  		\path (1E) ++(3*60:1cm) coordinate (1E3);
	  		\path (1E) ++(4*60:1cm) coordinate (1E4) ;
	  		\path (1E) ++(5*60:1cm) coordinate (1E5) ;
	  		\filldraw[fill=lightgray,draw=black] (1E0) -- (1E1) -- (1E2) -- (1E3) -- (1E4) -- (1E5) -- cycle;
	  		\path (1D) ++(180:2cm) coordinate (1E) node[right]{1};
	  		\path (1E) ++(240:2cm) coordinate (1F) ;
	  		\path (1F) ++(0:1cm) coordinate (1F0) ;
	  		\path (1F) ++(1*60:1cm) coordinate (1F1);
	  		\path (1F) ++(2*60:1cm) coordinate (1F2) ;
	  		\path (1F) ++(3*60:1cm) coordinate (1F3)  ;
	  		\path (1F) ++(4*60:1cm) coordinate (1F4) ;
	  		\path (1F) ++(5*60:1cm) coordinate (1F5);
	  		\filldraw[fill=lightgray,draw=black] (1F0) -- (1F1) -- (1F2) -- (1F3) -- (1F4) -- (1F5) -- cycle;
	  		\path (1E) ++(240:2cm) coordinate (1F) node[above]{2};
	  		\path (1F) ++(0:2cm) coordinate (1G);
	  		\path (1G) ++(0:1cm) coordinate (1G0);
	  		\path (1G) ++(1*60:1cm) coordinate (1G1) ;
	  		\path (1G) ++(2*60:1cm) coordinate (1G2) ;
	  		\path (1G) ++(3*60:1cm) coordinate (1G3) ;
	  		\path (1G) ++(4*60:1cm) coordinate (1G4) ;
	  		\path (1G) ++(5*60:1cm) coordinate (1G5) ;
	  		\filldraw[fill=lightgray,draw=black] (1G0) -- (1G1) -- (1G2) -- (1G3) -- (1G4) -- (1G5) -- cycle;
	  		\path (1F) ++(0:2cm) coordinate (1G) node{?};	
	  		\path (1B) ++(2*60:1cm) coordinate (1B2) ;

	  		\foreach \x in {0,1,2,3,...,11}
	  		{
	  			\draw[dashed] (1G)--+(\x*30:3.5cm);
	  		}
  			
  			\filldraw [lightgray] (9,0) circle (0.15cm);
  			\path (1F) ++(0:2cm) coordinate (1G);
  			\path (1G) ++(0:1cm) coordinate (1G0);
  			\path (1G) ++(1*60:1cm) coordinate (1G1) ;
  			\path (1G) ++(2*60:1cm) coordinate (1G2) ;
  			\path (1G) ++(3*60:1cm) coordinate (1G3) ;
  			\path (1G) ++(4*60:1cm) coordinate (1G4) ;
  			\path (1G) ++(5*60:1cm) coordinate (1G5) ;
  			\draw[draw=black] (1G0) -- (1G1) -- (1G2) -- (1G3) -- (1G4) -- (1G5) -- cycle;
  			\path (1F) ++(0:2cm) coordinate (1G) node{?};	
  			\path (1B) ++(2*60:1cm) coordinate (1B2) ;
	  		\filldraw [black] (10) circle (1.5pt);
	  		\filldraw [black] (11) circle (1.5pt);
	  		\filldraw [black] (12) circle (1.5pt);
	  		\filldraw [black] (13) circle (1.5pt);
	  		\filldraw [black] (14) circle (1.5pt);
	  		\filldraw [black] (15) circle (1.5pt);
	  		\filldraw [black] (1B1) circle (1.5pt);
	  		\filldraw [black] (1B2) circle (1.5pt);
	  		\filldraw [black] (1B3) circle (1.5pt);
	  		\filldraw [black] (1B4) circle (1.5pt);
	  		\filldraw [black] (1B0) circle (1.5pt);
	  		\filldraw [black] (1B5) circle (1.5pt);
	  		\filldraw [black] (1C0) circle (1.5pt);
	  		\filldraw [black] (1C1) circle (1.5pt);
	  		\filldraw [black] (1C2) circle (1.5pt);
	  		\filldraw [black] (1C3) circle (1.5pt);
	  		\filldraw [black] (1C4) circle (1.5pt);
	  		\filldraw [black] (1C5) circle (1.5pt);
	  		\filldraw [black] (1D0) circle (1.5pt);
	  		\filldraw [black] (1D1) circle (1.5pt);
	  		\filldraw [black] (1D2) circle (1.5pt);
	  		\filldraw [black] (1D3) circle (1.5pt);
	  		\filldraw [black] (1D4) circle (1.5pt);
	  		\filldraw [black] (1D5) circle (1.5pt);
	  		\filldraw [black] (1E0) circle (1.5pt);
	  		\filldraw [black] (1E1) circle (1.5pt);
	  		\filldraw [black] (1E2) circle (1.5pt);
	  		\filldraw [black] (1E3) circle (1.5pt);
	  		\filldraw [black] (1E4) circle (1.5pt);
	  		\filldraw [black] (1E5) circle (1.5pt);
	  		\filldraw [black] (1F0) circle (1.5pt);
	  		\filldraw [black] (1F1) circle (1.5pt);
	  		\filldraw [black] (1F2) circle (1.5pt);
	  		\filldraw [black] (1F3) circle (1.5pt);
	  		\filldraw [black] (1F4) circle (1.5pt);
	  		\filldraw [black] (1F5) circle (1.5pt);					
	  	\end{tikzpicture}
  		\caption{Attaching a brown $ 0 $-complex to the one in $ \mathcal{B}_{\mathcal{K}^{\langle 1 \rangle}} $, in the left Figure, forces $\mathcal{K}^{\langle 1 \rangle} $ to be constructed as on the right.}\label{ziel}
  		\end{figure}
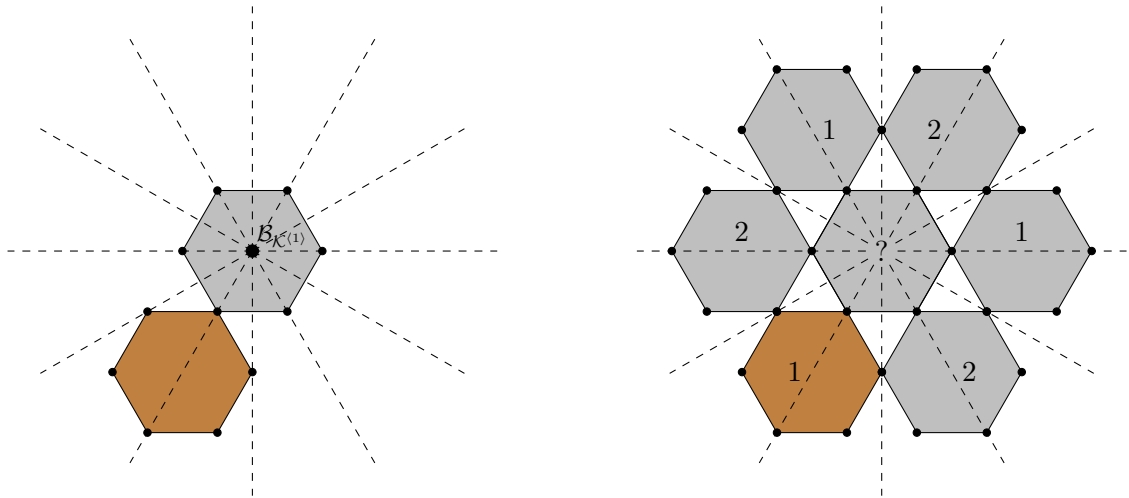
  	Before we move on to the proof of Theorem \ref {tw:niep}, we will prove the following lemmas.

	\begin{lemma}\label{lem:niezmienniczosc na obroty}
		Let $ \mathcal{K}^{\langle 1 \rangle} $ be a fractal with $ k $ essential fixed points. Then it is invariant under rotation by angle $ \frac{2 \pi}{k} $.
	\end{lemma}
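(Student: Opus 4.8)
The plan is to derive the rotational symmetry of $\mathcal{K}^{\langle 1\rangle}$ from the reflective symmetry postulated in Definition~\ref{def:snf}(4), first at scale $0$ and then rescaling by $L$ via \eqref{eq:Kn}.

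First I would record two elementary facts about the regular $k$-gon $V_0^{\langle 0\rangle}$ (Proposition~\ref{wielokat_foremny}), say with center $O$. For any two vertices $x,y\in V_0^{\langle 0\rangle}$ the line bisecting $[x,y]$ is one of the $k$ axes of symmetry of that polygon, it passes through $O$, and conversely every symmetry axis of the polygon arises this way as $\{x,y\}$ runs over pairs of vertices. In particular $S_{x,y}(V_0^{\langle 0\rangle})=V_0^{\langle 0\rangle}$, and one may pick two pairs whose bisectors meet at $O$ at angle $\tfrac{\pi}{k}$, so that the composition $\rho$ of the two corresponding reflections $S_{x,y}$ is the rotation by $\tfrac{2\pi}{k}$ about $O$.

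The core step is to show that each reflection $S_{x,y}$ with $x,y\in V_0^{\langle 0\rangle}$ maps $\mathcal{K}^{\langle 0\rangle}$ onto itself. Since $\Psi_i(z)=z/L+\nu_i$ has trivial rotational part, conjugating it by the reflection $S_{x,y}$ again produces a map of the form $z\mapsto z/L+\nu_i'$, and such a map is determined by its value at a single point. For every $i$, using $S_{x,y}(V_0^{\langle 0\rangle})=V_0^{\langle 0\rangle}$ together with the symmetry axiom,
\[
\bigl(S_{x,y}\Psi_i S_{x,y}^{-1}\bigr)\bigl(V_0^{\langle 0\rangle}\bigr)=S_{x,y}\bigl(\Psi_i(V_0^{\langle 0\rangle})\bigr)=\Psi_{\sigma(i)}\bigl(V_0^{\langle 0\rangle}\bigr)
\]
for some index $\sigma(i)$; as the two homotheties agree on the nonempty set $V_0^{\langle 0\rangle}$, we conclude $S_{x,y}\Psi_i S_{x,y}^{-1}=\Psi_{\sigma(i)}$. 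Because $S_{x,y}$ is injective and the maps $\Psi_1,\dots,\Psi_N$ are pairwise distinct (a consequence of the Open Set Condition), $\sigma$ is a permutation of $\{1,\dots,N\}$, hence $\{S_{x,y}\Psi_i S_{x,y}^{-1}\}_{i=1}^{N}=\{\Psi_i\}_{i=1}^{N}$. Consequently $S_{x,y}(\mathcal{K}^{\langle 0\rangle})$ is the unique nonempty compact attractor of the system $\{\Psi_i\}$, i.e. $S_{x,y}(\mathcal{K}^{\langle 0\rangle})=\mathcal{K}^{\langle 0\rangle}$. Therefore $\rho(\mathcal{K}^{\langle 0\rangle})=\mathcal{K}^{\langle 0\rangle}$, and $O$, being the common fixed point of all the reflections $S_{x,y}$, coincides with $\mathcal{B}_{\mathcal{K}^{\langle 0\rangle}}$.

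Finally, applying the dilation by $L$ and using $\mathcal{K}^{\langle 1\rangle}=L\,\mathcal{K}^{\langle 0\rangle}$ and $\mathcal{B}_{\mathcal{K}^{\langle 1\rangle}}=L\,\mathcal{B}_{\mathcal{K}^{\langle 0\rangle}}=LO$, the set $\mathcal{K}^{\langle 1\rangle}$ is invariant under $L\rho L^{-1}$, which is precisely the rotation by $\tfrac{2\pi}{k}$ about $\mathcal{B}_{\mathcal{K}^{\langle 1\rangle}}$. The step that needs care is the core one: the symmetry axiom concerns only the finite vertex sets $\Psi_i(V_0^{\langle 0\rangle})$, so the real work is to upgrade ``$S_{x,y}$ permutes this finite configuration'' to ``$S_{x,y}$ fixes the whole attractor'', which is exactly what the homothety-plus-attractor-uniqueness argument accomplishes; the planar geometry of the $k$-gon and the concluding rescaling are routine.
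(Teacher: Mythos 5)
Your proof is correct, and it takes a genuinely different route at the key step. Both arguments end the same way: two reflections in adjacent symmetry axes (meeting at angle $\tfrac{\pi}{k}$ at the barycenter) compose to the rotation by $\tfrac{2\pi}{k}$. The difference is in how the reflection-invariance of the fractal itself is obtained. The paper works directly with a point $x\in\mathcal{K}^{\langle 1\rangle}$ and simply asserts, ``due to the symmetry condition, there is a point inside the fractal symmetric to $x$ with respect to $L_2$''; but the symmetry axiom in Definition~\ref{def:snf}(4) only speaks about the finite vertex configurations $\Psi_i\bigl(V_0^{\langle 0\rangle}\bigr)$, so this step is taken for granted there. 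You identify exactly this as the real content and supply the missing argument: conjugating each similitude by $S_{x,y}$ yields another homothety with the same linear part, the symmetry axiom forces it to coincide with some $\Psi_{\sigma(i)}$, $\sigma$ is a permutation, and uniqueness of the attractor then gives $S_{x,y}\bigl(\mathcal{K}^{\langle 0\rangle}\bigr)=\mathcal{K}^{\langle 0\rangle}$; rescaling by $L$ via \eqref{eq:Kn} transfers everything to $\mathcal{K}^{\langle 1\rangle}$. What your approach buys is rigor at the one place where the paper's proof is hand-waving; what the paper's approach buys is brevity and a picture-level explanation. One small imprecision to fix: the symmetry axiom gives you only that $S_{x,y}\Psi_iS_{x,y}^{-1}$ and $\Psi_{\sigma(i)}$ have the \emph{same image} of $V_0^{\langle 0\rangle}$ as a set, not that they agree pointwise on it; to conclude that the two homotheties are equal, note that their composition $\Psi_{\sigma(i)}^{-1}\circ\bigl(S_{x,y}\Psi_iS_{x,y}^{-1}\bigr)$ is a translation mapping the finite nonempty set $V_0^{\langle 0\rangle}$ onto itself, hence is the identity. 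With that one-line repair the argument is complete.
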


	\begin{proof}
		It is enough to show that any point belonging to $ \mathcal{K}^{\langle 1 \rangle} $ after rotation by the angle $ \frac {2 \pi} {k} $ coincides with another. Let us take the point $x \in \mathcal{K}^{\langle 1 \rangle}$ being at distance $r$ from $ \mathcal{B}_{\mathcal{K}^{\langle 1 \rangle}}$, at distance $t_1$ from the axis of symmetry to the right of $x$ looking from $\mathcal{B}_{\mathcal{K}^{\langle 1 \rangle}}$ and $t_2$ from the axis of symmetry on the left. We will denote these axes of symmetry $L_1$ and $L_2$ respectively, and the next one lying to the left of $L_2$ will be denoted $L_3$ (as in the Figure \ref{obroty punktow}).
		Due to the symmetry condition, there is a point inside the fractal symmetric to $x$ with respect to $L_2$. Let us call it $y$. Just like $x$, it is at distance $t_2$ from $L_2$ and at distance $r$ from $ \mathcal{B}_{\mathcal{K}^{\langle 1 \rangle}}$. Moreover, since the image of $L_1$ in symmetry with respect to $L_2$ is $L_3$, then the distance of $y$ to $L_3$ is equal to $t_1$.
		Now let us take a look at the point symmetric to $y$ with respect to $L_3$, let us call it $z$. It is distant from $ \mathcal{B}_{\mathcal{K}^{\langle 1 \rangle}}$ by $r$ and distant from $L_3$ by $t_1$.
		The angle between consecutive fractal symmetry axes is $\frac{\pi}{k}$, so the angle between $L_1$ and $L_3$ is $\frac{2\pi}{k}$. The same is the angle between $x \mathcal{B}_{\mathcal{K}^{\langle 1 \rangle}}$ and $z \mathcal{B}_{\mathcal{K}^{\langle 1 \rangle}}$, because both segments form the same angle $ \gamma $ with the neighboring axes (here $ L_1 $ and $ L_3 $).
		So $z \in \mathcal{K}^{\langle 1 \rangle}$ is the image of the point $x$ by rotation around $\mathcal{B}_{\mathcal{K}^{\langle 1 \rangle}}$ by angle $\frac {2 \pi}{k}$.
	\end{proof}

		\begin{figure}[H]
			\centering
			\begin{tikzpicture}[scale=1.25]
				\draw[draw=black] (0,0) -- (-2.7cm,0) arc (-180:-90+atan(75/325):2.7cm);
				\fill[fill=white] (0,0) -- (-3.1cm,0) arc (180:180+atan(75/325):3.1cm) -- cycle;
				\fill[fill=white] (0,0) -- (-3.1cm,0) arc (180-atan(75/325):180:3.1cm) -- cycle;
				\draw[white,line width=10] (-4,0)--(4,0);
				
				\draw[dashed] (0.5,0)--(-4,0);
				\node at (-4.2,0) {$ L_1 $};
				
				\draw[dashed] (0,-4)--(0,0.5);
				\node at (0,-4.27) {$ L_3 $};
				\coordinate (00) at (0,0) node[right, xshift=2.7mm,yshift=2.3mm]{$\mathcal{B}_{\mathcal{K}^{\langle 1\rangle}}$ };
				\path (00) ++(45:0.5cm) coordinate (01);
				\path (00) ++(45+180:4cm) coordinate (02);
				\path (00) ++(45+90:0.5cm) coordinate (03);
				\path (00) ++(-45:0.5cm) coordinate (04);
				\draw[dashed] (01)--(02);
				\draw[dashed] (03)--(04);
				\node[xshift=-1.7mm,yshift=-1.7mm] at (02) {$ L_2 $};
				\coordinate (0x) at (-3.25,0);
				\path (0x) ++(0,-0.75) coordinate(x);
				\draw(0x)--(x) node[left,pos=0.5]{$ t_1 $};
				\draw(x)--(00) node[below,pos=0.5]{$ r $};
				\filldraw [black] (x) circle (1pt) node[left]{$ x $};
				\filldraw[fill=white,draw=black] (0,0) -- (-1cm,0) arc (180:270:1cm) -- cycle;
				\node at (-0.5,-0.5) {$ \frac{2\pi}{k} $};
				\node at (-2.2,-0.25) {$ \gamma $};
				\draw[draw=black] (0,0) -- (-2.5cm,0) arc (180:180+atan(75/325):2.5cm) -- cycle;
				\coordinate (0y) at (0,-3.25);
				\path (0y) ++(-0.75,0) coordinate(y);
				\path (0y) ++(0.75,0) coordinate(z);
				\draw(0y)--(y) node[below,pos=0.5]{$ t_1 $};
				\draw(y)--(00) node[left,pos=0.5]{$ r $};
				\filldraw [black] (y) circle (1pt) node[below,xshift=-0.5mm]{$ y $};
				\draw(0y)--(z) node[below,pos=0.5]{$ t_1 $};
				\filldraw [black] (0y) circle (0.75pt);
				\filldraw [black] (0x) circle (0.75pt);
				\draw(z)--(00) node[right,pos=0.5]{$ r $};
				\filldraw [black] (z) circle (1pt) node[below,xshift=0.5mm]{$ z $};
				\filldraw [black] (0,0) circle (2.3pt);
				\node at (0.25,-2.2) {$ \gamma $};
				\draw[draw=black] (0,0) -- (0,-2.5cm) arc (270:270+atan(75/325):2.5cm) -- cycle;
				
				\draw[draw=black] (x) -- (y)  node[pos=1/4, xshift=-5,yshift=-5]{$ t_2 $} node[pos=3/4, xshift=-5,yshift=-5]{$ t_2 $};
				\node at (-1.2,-2.1) {$ \frac{2\pi}{k} $};
				\filldraw [black] (x) ++(-45:1.77cm) circle (0.75pt);
			\end{tikzpicture}
			\caption{Illustration for the Lemma \ref{lem:niezmienniczosc na obroty} proof.}\label{obroty punktow}
		\end{figure}
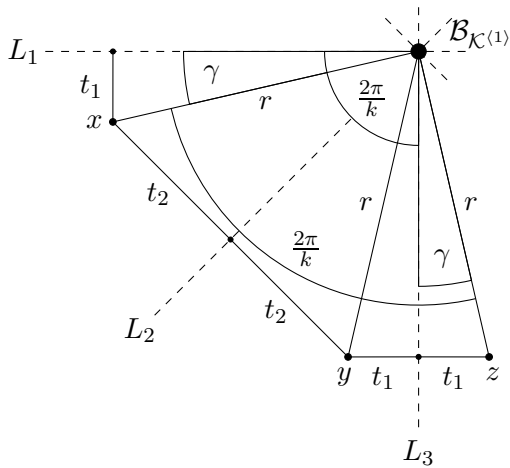
		
	The next two lemmas give us key information about the position of 0-complexes in $\mathcal{K}^{\langle 1 \rangle} $.

	\begin{lemma}\label{a}
		If $ k> 3 $, then there is no 0-complex $\Delta_{0}$ such that any of its vertices lies at the meeting point of all $U_i$ areas, i.e. in $ \mathcal{B}_{\mathcal{K}^{\langle 1 \rangle}}$.
	\end{lemma}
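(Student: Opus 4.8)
The plan is to argue by contradiction. Suppose some $0$-complex $\Delta_0 \subset \mathcal{K}^{\langle 1 \rangle}$ has a vertex at $\mathcal{B} := \mathcal{B}_{\mathcal{K}^{\langle 1 \rangle}}$. By Lemma~\ref{lem:niezmienniczosc na obroty}, the set $\mathcal{K}^{\langle 1 \rangle}$, and with it the family of all its $0$-complexes, is invariant under the rotation $R$ by angle $\tfrac{2\pi}{k}$ about $\mathcal{B}$. Since $R$ fixes $\mathcal{B}$, the sets $\Delta_0, R\Delta_0, \dots, R^{k-1}\Delta_0$ are $0$-complexes of $\mathcal{K}^{\langle 1 \rangle}$, each having $\mathcal{B}$ among its vertices; they are pairwise distinct, because the barycenter of $\Delta_0$ is at a positive distance from $\mathcal{B}$ (it equals the circumradius of $\mathcal{K}^{\langle 0 \rangle}$, as $\mathcal{B}$ is a vertex of $\Delta_0$) and is therefore moved by every nontrivial power of $R$.

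The first step is an angle count at $\mathcal{B}$. By Proposition~\ref{wielokat_foremny}, near $\mathcal{B}$ each $R^j \Delta_0$ occupies the angular sector at its vertex $\mathcal{B}$ of opening equal to the interior angle $\tfrac{(k-2)\pi}{k}$ of a regular $k$-gon. Distinct $0$-complexes have convex hulls with disjoint interiors (a standard consequence of the open set condition together with the nesting condition), so these $k$ sectors have pairwise disjoint interiors, forcing $k \cdot \tfrac{(k-2)\pi}{k} \leq 2\pi$, i.e.\ $k \leq 4$. Since $k > 3$, only $k = 4$ remains to be excluded.

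When $k = 4$, I note that the four sectors of opening $\tfrac{\pi}{2}$ have disjoint interiors and total measure $2\pi$, hence tile a neighbourhood of $\mathcal{B}$ exactly. Consequently two cyclically consecutive complexes, say $\Delta_0$ and $\Delta' := R\Delta_0$, share a whole side of their (square) convex hulls: both are translates of $\mathcal{K}^{\langle 0 \rangle}$, each with a vertex at $\mathcal{B}$ and a hull edge along the ray separating the two sectors, so the far endpoints of those two edges coincide at a point $w$; inspecting the remaining vertices one obtains $V(\Delta_0) \cap V(\Delta') = \{\mathcal{B}, w\}$. Along the segment $[\mathcal{B}, w]$, the traces $\Delta_0 \cap [\mathcal{B}, w]$ and $\Delta' \cap [\mathcal{B}, w]$ are congruent copies of the edge trace $E := \mathcal{K}^{\langle 0 \rangle} \cap s$, where $s$ is a side of $\conv(\mathcal{K}^{\langle 0 \rangle})$; since $\mathcal{K}^{\langle 0 \rangle}$ is symmetric in the perpendicular bisector of $s$ (the symmetry condition of Definition~\ref{def:snf}), $E$ is invariant under reversing $s$, so these two copies — both having extreme points $\mathcal{B}$ and $w$ — are literally the same subset of $[\mathcal{B}, w]$, and therefore $\Delta_0 \cap \Delta' \supseteq E$.

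Finally, the crucial observation is that $E$ always contains a point other than the two endpoints of $s$. Indeed, a vertex $v$ of $\conv(\mathcal{K}^{\langle 0 \rangle})$ is the fixed point of one of the generating maps $\Psi_i$, so $\Psi_i(\mathcal{K}^{\langle 0 \rangle}) \subseteq \mathcal{K}^{\langle 0 \rangle}$ has $v$ as an extreme point, hence as a vertex of the regular polygon $\conv(\Psi_i(\mathcal{K}^{\langle 0 \rangle}))$; its interior angle at $v$ equals that of $\conv(\mathcal{K}^{\langle 0 \rangle})$, which forces the two hull edges of $\Psi_i(\mathcal{K}^{\langle 0 \rangle})$ at $v$ to lie along the sides of $\conv(\mathcal{K}^{\langle 0 \rangle})$ through $v$. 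The vertex of $\Psi_i(\mathcal{K}^{\langle 0 \rangle})$ adjacent to $v$ along $s$ then lies strictly between the endpoints of $s$ and belongs to $\mathcal{K}^{\langle 0 \rangle}$. Hence $\Delta_0 \cap \Delta' \supseteq E \supsetneq \{\mathcal{B}, w\} = V(\Delta_0) \cap V(\Delta')$, contradicting the nesting condition. I expect the genuine obstacle to be the case $k = 4$: the angle count alone yields only $k \leq 4$, and eliminating $k = 4$ needs the tiling-plus-edge-trace argument above; a subsidiary point requiring care is the non-overlapping of the convex hulls of distinct $0$-complexes, used in the angle count.
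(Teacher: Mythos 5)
Your proof is correct, but it takes a genuinely different route from the paper's. The paper exploits the full dihedral symmetry of $\mathcal{K}^{\langle 1 \rangle}$: it considers the $2k$ symmetry half-axes through $\mathcal{B}_{\mathcal{K}^{\langle 1 \rangle}}$, spaced by $\pi/k$, and argues that no side of $\mathcal{H}(\Delta_0)$ can lie on such an axis (else a reflected neighbour would share a whole side, violating nesting) and that at most one axis can cross the interior of $\mathcal{H}(\Delta_0)$ (else the axis would have to be a bisector of the interior angle at $\mathcal{B}_{\mathcal{K}^{\langle 1 \rangle}}$, and there is only one such bisector). This forces the interior angle to satisfy $\frac{k-2}{k}\pi<\frac{2\pi}{k}$, i.e.\ the strict bound $k<4$ in a single step. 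You use only the cyclic rotational symmetry (Lemma \ref{lem:niezmienniczosc na obroty}), whose orbit argument packs $k$ sectors of opening $\frac{(k-2)\pi}{k}$ around $\mathcal{B}_{\mathcal{K}^{\langle 1 \rangle}}$ and yields only the non-strict bound $k\le 4$; you then must eliminate $k=4$ separately via the exact-tiling observation and the edge-trace argument. What your approach buys is a fully rigorous justification of the step the paper leaves informal --- that two complexes whose hulls share a side actually intersect in more than their common vertices: your observation that $\Psi_i(V_0^{\langle 0\rangle})$ contributes a non-vertex point of $\mathcal{K}^{\langle 0\rangle}$ on each side of $\conv(\mathcal{K}^{\langle 0\rangle})$, combined with the reversal symmetry of the edge trace, is a clean way to see this. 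What it costs is length and two auxiliary facts you invoke without proof: that the rotation permutes the family of $0$-complexes (so that nesting applies to $R\Delta_0$), and that distinct $0$-complexes have convex hulls with disjoint interiors; both are standard for nested fractals and are also used implicitly in the paper (e.g.\ via Fact 3.1 of the cited reference), so neither is a genuine gap, but in a self-contained write-up they would deserve a sentence each.
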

	\begin{proof}
		
		Since $\mathcal{K}^{\langle 1 \rangle}$ has $k$ axes of symmetry, the smaller of the angles between consecutive axes is $\frac{\pi}{k}$.
		
		It is known that the angle in the convex hull $\mathcal{H}(\Delta_0)$, which is a regular polygon with $k$ vertices, is equal to $\frac{k-2}{k}\pi$, for $k \in \{4,5,6, \dots \}$ (by assumption we exclude equilateral triangles).
		
		If any sides of $\mathcal {H}(\Delta_0)$ were on the axes of symmetry of $\mathcal{K}^{\langle 1 \rangle}$, we would find two neighbor 0-complexes with common sides (but we know that they can only have joint vertices). 
		Similarly, if the interior of $\mathcal{H}(\Delta_0)$ is crossed by the axis of symmetry of $\mathcal{K}^{\langle 1 \rangle}$, then that axis must also be the axis of symmetry of $\mathcal{H}(\Delta_0)$ (that is the bisector of the angle of the polygon $\mathcal{H}(\Delta_0)$ at the vertex $\mathcal{B} _{\mathcal{K}^{\langle 1 \rangle}} $).
		
		Thus, $\mathcal{H}(\Delta_0) $ can be intersected by at most one axis of symmetry of $\mathcal{K}^{\langle 1 \rangle}$ and the internal angle of $\mathcal{H}(\Delta_0) $ must be less than $\frac{2\pi}{k}$.
		
		It means that the following inequality holds
		$$
		\frac{k-2}{k}\pi<\frac{2\pi}{k},
		$$
		and equivalently $k<4$.
	\end{proof}

		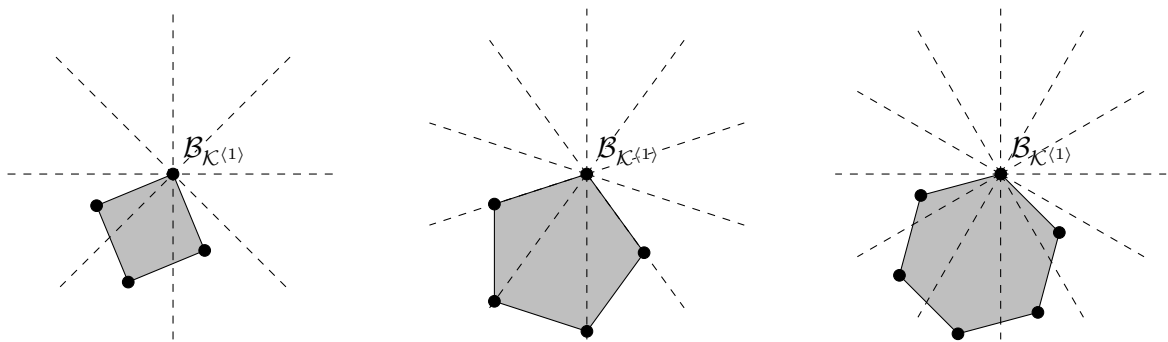
\begin{figure}[H]
		\begin{center}
		\begin{tikzpicture}[scale=1.1]
			\coordinate (00) at (0,0);
			\path (00) ++(45:2cm) coordinate (01);
			\path (00) ++(45+180:2cm) coordinate (02);
			\path (00) ++(45+90:2cm) coordinate (03);
			\path (00) ++(-45:2cm) coordinate (04);
			\path (00) ++(180+22.5:1cm) coordinate (101);
			\path (101) ++(-45-22.5:1cm) coordinate (102);
			\path (102) ++(22.5:1cm) coordinate (103);
			\filldraw[fill=lightgray,draw=black] (101) -- (102) -- (103) -- (00) -- cycle;
			\draw[dashed] (-2,0)--(2,0);
			\draw[dashed] (0,-2)--(0,2);
			\draw[dashed] (01)--(02);
			\draw[dashed] (03)--(04);
			\filldraw [black] (0,0) circle (2pt);
			\filldraw [black] (101) circle (2pt);
			\filldraw [black] (102) circle (2pt);
			\filldraw [black] (103) circle (2pt);
			\coordinate (A) at (5,0);
			\path (A) ++(3*72+36:1cm) coordinate(A2);
			\path (A2) ++(0:1cm) coordinate (0);
			\path (A2) ++(1*72:1cm) coordinate (1);
			\path (A2) ++(2*72:1cm) coordinate (2);
			\path (A2) ++(3*72:1cm) coordinate (3);
			\path (A2) ++(4*72:1cm) coordinate (4);
			\filldraw[fill=lightgray,draw=black] (0) -- (1) -- (2) -- (3) -- (4) -- cycle;
			\foreach \x in{0,1,2,3,...,9}{
				\path (A) ++(-18+\x*36:2cm) coordinate (P\x);
				\draw[dashed] (A)--(P\x);
			}
			\filldraw [black] (5,0) circle (2pt);
			\filldraw [black] (0) circle (2pt);
			\filldraw [black] (4) circle (2pt);
			\filldraw [black] (2) circle (2pt);
			\filldraw [black] (3) circle (2pt);
			
			\coordinate (B) at (10,0);
			\path (B) ++(-30:2cm) coordinate (P0b);
			\path (B) ++(30:2cm) coordinate (P1b);
			\path (B) ++(90:2cm) coordinate (P2b);
			\path (B) ++(150:2cm) coordinate (P3b);
			\path (B) ++(210:2cm) coordinate (P4b);
			\path (B) ++(270:2cm) coordinate (P5b);
			\path (B) ++(255:1cm) coordinate(B1);
			\path (B1) ++(15:1cm) coordinate (0b);
			\path (B1) ++(1*60+15:1cm) coordinate (1b);
			\path (B1) ++(2*60+15:1cm) coordinate (2b);
			\path (B1) ++(3*60+15:1cm) coordinate (3b);
			\path (B1) ++(4*60+15:1cm) coordinate (4b);
			\path (B1) ++(5*60+15:1cm) coordinate (5b);
			\filldraw[fill=lightgray,draw=black]  (0b) -- (1b) -- (2b) -- (3b) -- (4b) --(5b)-- cycle;
			
			\draw[dashed] (B)--(P0b);
			\draw[dashed] (B)--(P1b);
			\draw[dashed] (B)--(P2b);
			\draw[dashed] (B)--(P3b);
			\draw[dashed] (B)--(P4b);
			\draw[dashed] (B)--(P5b);
			\foreach \x in {1,2,3,...,6}{
				\path (B) ++(\x *60-60:2cm) coordinate (a\x );
				\draw[dashed] (B)--(a\x);
			}
			\foreach \x in {0,1,2,3,...,5}{
				\filldraw [black] (\x b) circle (2pt);
			}
			\node[right,yshift=0.3cm] at (00) {\contour{white}{$ \mathcal{B}_{\mathcal{K}^{\langle 1\rangle}} $}};
			\node[right,yshift=0.3cm] at (A) {\contour{white}{$ \mathcal{B}_{\mathcal{K}^{\langle 1\rangle}} $}};
			\node[right,yshift=0.3cm] at (B) {\contour{white}{$ \mathcal{B}_{\mathcal{K}^{\langle 1\rangle}} $}};
		\end{tikzpicture}
		\caption{Example of overlapping $ \mathcal{H}(\Delta_0)$ on other axes, looking from the left for $k$ equal to $4$, $5$ and $6$, respectively.}
		\end{center}
		\end{figure}
	
	Now we will show that for almost all values of $k$ a 0-complex cannot be located in the $\mathcal{K}^{\langle 1 \rangle}$ barycenter.

	\begin{lemma}\label{D0 w BK1}
		For $ k>4 $ and $ k \neq 6 $ we have $\mathcal{B}_{\mathcal{K}^{\langle 1\rangle}} \notin \mathcal{K}^{\langle 1\rangle}$.
	\end{lemma}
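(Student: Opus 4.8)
The plan is to argue by contradiction, in two stages: first reduce the statement to the non-existence of a \emph{central $0$-complex} (a $0$-complex $\Delta_0\subset\mathcal{K}^{\langle 1\rangle}$ with $\mathcal{B}_{\Delta_0}=\mathcal{B}_{\mathcal{K}^{\langle 1\rangle}}$), then rule such a complex out for $k>4$, $k\neq 6$.

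For the first stage, suppose $\mathcal{B}:=\mathcal{B}_{\mathcal{K}^{\langle 1\rangle}}\in\mathcal{K}^{\langle 1\rangle}$. Since $\mathcal{K}^{\langle 1\rangle}$ is a finite union of $0$-complexes, $\mathcal{B}$ lies in one of them; by Lemma~\ref{a} it is not a vertex of any $0$-complex, and by the Nesting condition distinct $0$-complexes meet only at common vertices, so $\mathcal{B}$ lies in a \emph{unique} $0$-complex $\Delta_0$, with $\mathcal{B}\in\mathcal{H}(\Delta_0)$ but $\mathcal{B}$ not one of its $k$ vertices. By Lemma~\ref{lem:niezmienniczosc na obroty} the rotation $\rho$ by $\tfrac{2\pi}{k}$ about $\mathcal{B}$ maps $\mathcal{K}^{\langle 1\rangle}$ onto itself; being an isometry of $\mathcal{K}^{\langle 1\rangle}$ it permutes the $0$-complexes, and since it fixes $\mathcal{B}$ it must fix the unique complex $\Delta_0$ through $\mathcal{B}$. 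A rotation by $\tfrac{2\pi}{k}$ preserving a regular $k$-gon is the rotation about its centre, so $\mathcal{B}=\mathcal{B}_{\Delta_0}$: $\Delta_0$ is a central $0$-complex. (Adjoining the reflections $S_{x,y}$ shows in addition that $\mathcal{K}^{\langle 1\rangle}$ is invariant under the full dihedral group $D_k$ acting about the common centre $\mathcal{B}=\mathcal{B}_{\Delta_0}$, which I would use freely below.)

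For the second, and crucial, stage, let $R$ be the circumradius of the regular $k$-gon on the vertices of a $0$-complex, so that $\mathcal{H}(\mathcal{K}^{\langle 1\rangle})$ is a regular $k$-gon of circumradius $LR$ with the same centre and the same axes of symmetry as $\mathcal{H}(\Delta_0)$. By the Connectivity condition $\Delta_0$ shares a vertex with some other $0$-complex, so by $D_k$-symmetry it has a neighbour at \emph{every} vertex. Fixing a vertex $v$ of $\Delta_0$, the line $\mathcal{B}v$ is a common axis of symmetry of $\Delta_0$ and of $\mathcal{K}^{\langle 1\rangle}$ and bisects the interior angle $\tfrac{(k-2)\pi}{k}$ of $\mathcal{H}(\Delta_0)$ at $v$; I would then analyse the angular sectors at $v$. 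Using the Open Set Condition (convex hulls of distinct $0$-complexes have disjoint interiors), the fact that each neighbour is again a regular $k$-gon of circumradius $R$ with a vertex at $v$, that the reflection in $\mathcal{B}v$ permutes the neighbours at $v$, and that every neighbour must lie inside $\mathcal{H}(\mathcal{K}^{\langle 1\rangle})$, one pins down the position and orientation of the whole ring of complexes around $\Delta_0$. This ring can close up $D_k$-symmetrically around $\mathcal{B}$ without overlaps and inside $\mathcal{H}(\mathcal{K}^{\langle 1\rangle})$ only when an elementary divisibility/angle condition on $k$ holds --- the same condition that lets congruent regular $k$-gons tile a neighbourhood of a point (e.g.\ $(k-2)\mid 4$), i.e.\ only for $k\in\{3,4,6\}$. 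As $k>4$ and $k\neq 6$ this fails, contradicting stage one; hence $\mathcal{B}_{\mathcal{K}^{\langle 1\rangle}}\notin\mathcal{K}^{\langle 1\rangle}$.

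The main obstacle is that last geometric step: the first ring of neighbours alone does not exclude $k=5,7,\dots$ (five congruent pentagons do fit, without overlap, around the vertices of a central pentagon), so one must also exploit that each ring complex is itself a vertex of further $0$-complexes and that everything is trapped inside the relatively small $k$-gon $\mathcal{H}(\mathcal{K}^{\langle 1\rangle})$ of circumradius only $LR$; keeping track of which vertex of each ring complex may point towards $\mathcal{B}$, and of the distances $|\mathcal{B}-\mathcal{B}_{\Delta_0'}|$ permitted by the Open Set Condition and by containment in $\mathcal{H}(\mathcal{K}^{\langle 1\rangle})$, is what ultimately forces $k\in\{3,4,6\}$. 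Conceptually this is a crystallographic-restriction phenomenon: a central $0$-complex would endow $\mathcal{K}^{\langle 1\rangle}$ with an order-$k$ rotational symmetry centred at a $0$-complex barycentre, which the lattice-like placement of $0$-complexes tolerates only for $k\in\{1,2,3,4,6\}$, and here $k\ge 3$.
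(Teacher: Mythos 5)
Your first stage is sound and matches the paper: by Lemma \ref{a} the barycenter cannot be a vertex, so it lies in a unique $0$-complex, and the rotation of Lemma \ref{lem:niezmienniczosc na obroty} forces that complex to be concentric with $\mathcal{K}^{\langle 1\rangle}$. The genuine gap is your second stage, which is the entire content of the lemma and which you do not carry out. The mechanism you propose --- that the ring of neighbours around the central complex can ``close up'' only when $(k-2)\mid 4$, as in the classification of tilings of a neighbourhood of a point --- is not the right criterion here, because adjacent $0$-complexes share only a vertex, never an edge; as you yourself observe, five congruent pentagons fit without overlap around the vertices of a central pentagon, so for $k=5,7,\dots$ the first ring yields no contradiction from overlap or angle-sum considerations. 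Your fallback --- tracking further rings and using containment in $\mathcal{H}(\mathcal{K}^{\langle 1\rangle})$ ``of circumradius only $LR$'' --- is never executed, and it is doubtful as stated: $L$ is not bounded, so $\mathcal{H}(\mathcal{K}^{\langle 1\rangle})$ need not be small relative to a single $0$-complex, and nothing in your sketch quantifies the obstruction.

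The paper closes this gap with a purely local argument whose key tool is already in the proof of Lemma \ref{a}: any axis of symmetry of $\mathcal{K}^{\langle 1\rangle}$ that meets the interior of $\mathcal{H}(\Delta_0)$ for some $0$-complex $\Delta_0$ must be an axis of symmetry of $\mathcal{H}(\Delta_0)$, since the corresponding reflection permutes the $0$-complexes and distinct hulls have disjoint interiors. A neighbour $\Delta_0$ of the concentric central complex shares a vertex $B$ lying on an axis $L_2$ of $\mathcal{K}^{\langle 1\rangle}$. For odd $k$, $\Delta_0$ arises by rotating the central complex about $B$ by $\tfrac{k\pm 1}{k}\pi\neq\pi$, so $L_2$ enters the interior of $\mathcal{H}(\Delta_0)$ without being one of its symmetry axes: contradiction. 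For even $k\geq 8$, $L_2$ is a symmetry axis of $\Delta_0$, but the two flanking axes $L_1,L_3$ (at angle $\pi/k$ from $L_2$) also enter the interior of $\mathcal{H}(\Delta_0)$ --- this is the computation $b=a/\sin\bigl(\tfrac{k-4}{2k}\pi\bigr)<2a$ --- and they cannot be symmetry axes of $\mathcal{H}(\Delta_0)$ because only $L_2$ passes through $\mathcal{B}_{\Delta_0}$: contradiction again. Hence the central complex has no neighbour, violating connectivity. If you want to repair your write-up, replace the crystallographic-restriction heuristic by this axis argument; the rest of your structure can stand.
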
 
	\begin{proof}
		On the contrary, let us suppose that there exists $\Delta_{0} \subset \mathcal{K}^{\langle 1 \rangle}$ containing $\mathcal{B}_{\mathcal{K}^{\langle 1 \rangle}}$. Let us denote it by $\Delta_{\mathcal {B}}$. We know from Lemma \ref{a} that $ \mathcal{B}_{\mathcal {K}^{\langle 1 \rangle}} $ cannot be a vertex of $ \Delta_{\mathcal {B}}$. The symmetry condition forces $\mathcal{B}_{\mathcal{K}^{\langle 1 \rangle}} $ to be also the barycenter of $ \Delta_{\mathcal {B}}$.

		We will show that such $\Delta_{\mathcal {B}}$ cannot be adjacent to any other $\Delta_{0}$. At first we will prove this for an odd $ k>4$, and later for even $ k>6 $ .

		For odd $k$ note that any 0-complex adjacent to $\Delta_{\mathcal {B}}$ must overlap it after rotation by the angle $\frac{k-1}{k}\pi$ or $\frac{k+1}{k}\pi$ around the common vertex (cf. Fact 3.1 in \cite{bib:KOPP}). Then the axis of symmetry of $\mathcal{K}^{\langle 1 \rangle}$, passing through the common vertex of $\Delta_{0}$ and $\Delta_{\mathcal{B}}$, will intersect $ \Delta_{0} $, but it will not be $ \Delta_{0}$'s axis of symmetry.

		The axis of symmetry of $\mathcal{K}^{\langle 1 \rangle}$ would be $\Delta_0 $'s axis of symmetry only when $\Delta_0$ originated from the rotation of $\Delta_{\mathcal {B}}$ by angle $\pi$, which is not allowed for odd $k$ (see Figure \ref{5i5}).

		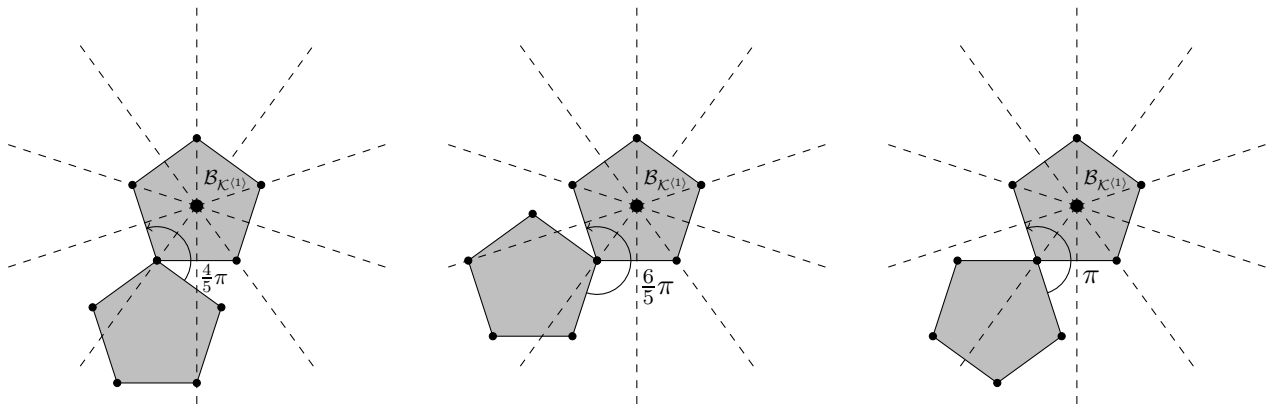
\begin{figure}[H]
		\centering
		\begin{tikzpicture}[scale=0.9]
			\coordinate (A) at (0,0);
			\foreach \x in {1,2,3,...,5}{
				\path (A) ++(\x*72+18:1cm) coordinate (a\x);
			}
			\filldraw[fill=lightgray,draw=black] (a1)--(a2)--(a3)--(a4)--(a5)--cycle;
			\foreach \x in {1,2,3,...,5}{
				\filldraw [black] (a\x) circle (1.5pt);
			}
			\node[right,yshift=0.34cm,xshift=0cm,yscale=0.7,xscale=0.7] at (A) {$\mathcal{B}_{\mathcal{K}^{\langle 1\rangle}}$};
			\path (A) ++(-126:1cm)++(0,-1) coordinate (B);
			\foreach \x in {1,2,3,...,5}{
				\path (B) ++(\x*72-54:1cm) coordinate (b\x);
			}
			\filldraw[fill=lightgray,draw=black] (b1)--(b2)--(b3)--(b4)--(b5)--cycle;
			\foreach \x in {1,2,3,...,5}{
				\filldraw [black] (b\x) circle (1.5pt);
			}
			\draw[->] (a3) + (-36:0.5cm) arc (-36:108:0.5cm) node[pos=0.5,right,yshift=-5mm,xshift=1mm, yscale=0.8, xscale=0.8]{$ \frac{4}{5} \pi$};

			\foreach \x in {2,3,...,10}{
				\path (A) ++(\x*36+18:3cm) coordinate (\x);
				\draw[dashed] (A)--(\x);
			}
			\path (A) ++(1*36+18:0.9cm) coordinate (1);
			\path (1) ++(1*36+18:2.1cm) coordinate (11);
			\draw[dashed] (1)--(11);
			\filldraw [black] (A) circle (2.5pt);
			
			\coordinate (J) at (6.5,0);
			\foreach \x in {1,2,3,...,5}{
				\path (J) ++(\x*72+18:1cm) coordinate (j\x);
			}
			\filldraw[fill=lightgray,draw=black] (j1)--(j2)--(j3)--(j4)--(j5)--cycle;
			\foreach \x in {1,2,3,...,5}{
				\filldraw [black] (j\x) circle (1.5pt);
			}
			\node[right,yshift=0.34cm,xshift=0cm,yscale=0.7,xscale=0.7] at (J) {$\mathcal{B}_{\mathcal{K}^{\langle 1\rangle}}$};
			\path (J) ++(-126:1cm)++(-162:1cm) coordinate (K);
			\foreach \x in {1,2,3,...,5}{
				\path (K) ++(\x*72+18:1cm) coordinate (k\x);
			}
			\filldraw[fill=lightgray,draw=black] (k1)--(k2)--(k3)--(k4)--(k5)--cycle;
			\foreach \x in {1,2,3,...,5}{
				\filldraw [black] (k\x) circle (1.5pt);
			}
			\draw[->] (k5) + (-108:0.5cm) arc (-108:108:0.5cm) node[pos=0.5,right,yshift=-3.5mm,xshift=-0.2mm]{$ \frac{6}{5} \pi$};

			\foreach \x in {2,3,...,10}{
				\path (J) ++(\x*36+18:3cm) coordinate (L\x);
				\draw[dashed] (J)--(L\x);
			}
			\path (J) ++(1*36+18:0.9cm) coordinate (tu);
			\path (tu) ++(1*36+18:2.1cm) coordinate (tam);
			\draw[dashed] (tu)--(tam);
			\filldraw [black] (J) circle (2.5pt);
			
			\coordinate (D) at (13,0);
			\foreach \x in {1,2,3,...,5}{
				\path (D) ++(\x*72+18:1cm) coordinate (d\x);
			}
			\filldraw[fill=lightgray,draw=black] (d1)--(d2)--(d3)--(d4)--(d5)--cycle;
			\foreach \x in {1,2,3,...,5}{
				\filldraw [black] (d\x) circle (1.5pt);
			}
			\node[right,yshift=0.34cm,xshift=0cm,yscale=0.7,xscale=0.7] at (D) {$\mathcal{B}_{\mathcal{K}^{\langle 1\rangle}}$};
			\path (D) ++(-126:2cm) coordinate (C);
			\foreach \x in {1,2,3,...,5}{
				\path (C) ++(\x*72+54:1cm) coordinate (c\x);
			}
			\filldraw[fill=lightgray,draw=black] (c1)--(c2)--(c3)--(c4)--(c5)--cycle;
			\foreach \x in {1,2,3,...,5}{
				\filldraw [black] (c\x) circle (1.5pt);
			}
			\draw[->] (c5) + (-72:0.5cm) arc (-72:108:0.5cm) node[pos=0.5,right,yshift=-3.5mm,xshift=0.4mm]{$\pi$};

			\foreach \x in {2,3,...,10}{
				\path (D) ++(\x*36+18:3cm) coordinate (A\x);
				\draw[dashed] (D)--(A\x);
			}
			\path (D) ++(1*36+18:0.9cm) coordinate (1111);
			\path (1111) ++(1*36+18:2.1cm) coordinate (111);
			\draw[dashed] (1111)--(111);
			\filldraw [black] (D) circle (2.5pt);
		\end{tikzpicture}
		\caption{Example for $k=5$. From the left side, two possible rotations and the impossible one forced by the symmetry.}
		\label{5i5}
		\end{figure}
		
		If $k>6$ is even, we will show that even though any 0-complex $\Delta_{0}$ adjacent to $\Delta_{\mathcal {B}} $ does not violate the symmetry with respect to the axis passing through the common vertex of $\Delta_{\mathcal {B}} $ and $\Delta_{0}$, at least two other symmetry axes of $\mathcal{K}^{\langle 1 \rangle}$ intersect $\Delta_0$, what contradicts the symmetry condition.
		
		We know that the internal angle of convex hulls $\mathcal{H}(\Delta_{\mathcal{B}})$ and $\mathcal{H}(\Delta_{0})$ is $\frac{k-2}{k} \pi$. They are regular polygons, and let us denote the length of their sides as $2a$. 
		
		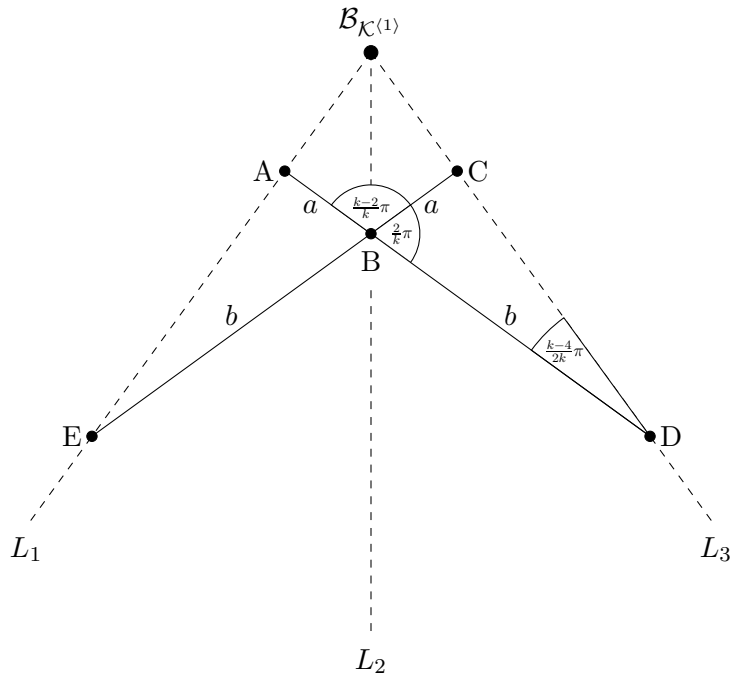
\begin{figure}[H]
		\centering
		\begin{tikzpicture}[scale=1.3]
			\coordinate (0) at (0,0) node[above, yshift=0.08cm]{$\mathcal{B}_{\mathcal{K}^{\langle 1\rangle}}$};
			\filldraw [black] (0) circle (2pt);
			\foreach \x in {1,2,3}{
				\path (0) ++(-162+36*\x:6cm) coordinate (L\x );
				\draw[dashed] (0) -- (L\x) node[below]{$ L_\x $};
			}
			\path (0,0)++(-162+36:1.5cm) node[left]{A} coordinate (A);
			\path (0,0)++(-54:1.5cm) node[right]{C} coordinate(C);
			\path(A) ++(-36:1.089813792008) node[below]{D} coordinate (D);
			\draw(C)--(D) node[below, pos=0.3]{$ a $};
			\draw(A) -- (D) node[below, pos=0.3]{$ a $};
			\path(A) ++(-36:4.616525305762879) node[right]{D} coordinate (B);
			\path(C) ++(180+36:4.616525305762879) node[left]{E} coordinate (E);
			
			\path(D)++(36:0.5cm) coordinate (D');
			\draw[fill=white] (D) --(D') arc (36:144:0.5cm) node[pos=0.5,xscale=0.6,yscale=0.6,yshift=-0.5cm]{$ \frac{k-2}{k} \pi$} --cycle;
			\path(D)++(36:0.5cm) coordinate (D'');
			\draw[fill=white] (D) --(D'') arc (36:-36:0.5cm) node[pos=0.5,xscale=0.6,yscale=0.6,xshift=-0.4cm]{$ \frac{2}{k} \pi$} --cycle;
			\path(D)++(-36:0.5cm) coordinate (D''');
			\filldraw[fill=white,draw=white] (D) --(D''') arc (-36:-144:0.5cm)  --cycle;
			\path(B)++(126:1.5cm) coordinate (B');
			\path(A) ++(-36:1.089813792008) node[below,yshift=-0.1cm]{B} coordinate (D);
			\draw[fill=white] (B) --(B') arc (126:144:1.5cm) node[pos=0.5,xscale=0.6,yscale=0.6,yshift=-0.4cm,xshift=0.4cm]{$ \frac{k-4}{2k} \pi$} --cycle;
			\draw(E)--(D) node[above, pos=0.5]{$ b $};
			\draw(D) -- (B) node[above, pos=0.5]{$ b $};
			
			\filldraw [black] (A) circle (1.5pt);
			\filldraw [black] (B) circle (1.5pt);
			\filldraw [black] (C) circle (1.5pt);
			\filldraw [black] (D) circle (1.5pt);
			\filldraw [black] (E) circle (1.5pt);
		\end{tikzpicture}
		\caption{We get $b<2a$, so the vertices of $\Delta_0$ lie outside the axes $L_1$ and $ L_3$.}\label{Styczne d0}
		\end{figure}
		Let us denote the vertices as shown in the Figure \ref{Styczne d0}. Let $B$ be the common vertex of $\mathcal{H}(\Delta_{\mathcal {B}})$ and $\mathcal{H}(\Delta_{0})$. Let us also denote the axis of symmetry of $\mathcal{K}^{\langle 1 \rangle} $ passing through $B$ as $L_2$. Let $A$ and $C$ be the midpoints of the sides of $\mathcal{H} (\Delta_{\mathcal {B}})$ sides emanating from $B$, and let the axes of symmetry passing through them be $L_1, L_3$ respectively. Then $AB = BC = a$.
				
		Let $D$ and $E$ be the intersection points of $L_3$ and $L_1$ with the sides of $\mathcal{H} (\Delta_0)$ emanating from $B$. Let us denote the length of $BD$ as $b$. We will prove that $b<2a $, and hence the vertex of $\Delta_{0}$ lying on the extension of $AD$ is not in the area between the axes denoted as $L_2$ and $L_3$.
		
		After simple calculations, we get $\angle BDC = \frac{k-4}{2k} \pi $. From the dependences in a right triangle $BCD$ we get
		$$
		 b=\frac{a}{\sin\left( \frac{k-4}{2k}\pi\right)}.
		$$ 
		We want to prove that $b<2a $. We make further transformations of our inequality
		\begin{eqnarray}
		\frac{a}{\sin\left( \frac{k-4}{2k}\pi\right) }&<&2a\\
		\frac{1}{\sin\left( \frac{k-4}{2k}\pi\right) }&<&2\\
		1&<&2\sin\left( \frac{k-4}{2k}\pi\right) .
		\end{eqnarray}
		It's easy to see that $ \sin \left( \frac {k-4} {2k} \pi \right) $ is increasing for $k\geq 4 $.
		Hence for $ k \geq $ 8 we have $ 2 \sin \left(\frac {7-4} {2 \cdot 7} \pi \right) <2 \sin \left(\frac {k-4} {2 \cdot k} \pi \right) $, and since $ 1 <1.24 <2 \sin \left(\frac {3} {14} \pi \right) $, we have proved the lemma.
	\end{proof}
	After presentation of all the necessary tools, we can finally move on to the actual proof of the Theorem \ref{tw:niep}.
	\begin{proof}[Proof of Theorem \ref{tw:niep}]
	
		$ (\Rightarrow )$ If $\mathcal{K}^{\langle 1\rangle}$ can be well labeled, then any subset of $\mathcal{K}^{\langle 1\rangle}$ can also be well labeled.\\
	
		$ (\Leftarrow)$  If $k\in \{3,4,5\}$, then Theorem \ref{thm:prime} and \cite{bib:KOPP}[Corollary 3.1] tell that the fractal has the GLP. Let us assume that $k\geq 6$ (but if $k=6$, then $\mathcal{B}_{\mathcal{K}^{\langle 1\rangle}} \notin \mathcal{K}^{\langle 1\rangle} $).
		
		Let $\mathcal{A}=\{1,2,3,\dots,k\}$ be the set of labels. From Lemma \ref{lem:niezmienniczosc na obroty} we know that the areas $U_i$ are $2\pi/k$-rotationally invariant, therefore $W_i$'s are as well. Let us denote one of them as $W_1$ and the following as $W_2, ..., W_k$ counter-clockwise.
		Let us label vertices of all $0$-complexes in $ W_1\cup W_2 $ in a "good" way, i.e. each $0$-complex has the complete set of labels in the same order. We will expand this labeling to vertices of all other $0$-complexes in $\mathcal{K}^{\langle 1\rangle}$.

Let us consider vertices of $0$-complexes $\Delta_0$ from $W_1$ being simultaneoulsy vertices of another complex $\Delta_0^{'}$ from
		\begin{itemize}
			\item[1.]  $ W_k $ -- we denote them as $ w_{1,i} $, $ i\in\mathbb{I}_1 $ and their collection as $V_1$.
			\item[2.]  $ W_2 $ -- we denote them as $ w_{2,i} $, $ i\in\mathbb{I}_2 $, and their collection as $V_2$. 
		\end{itemize} 
		From Lemma \ref{a} we know that no vertex belongs do both categories. Lemma \ref{D0 w BK1} implies that each complex belongs to some slice.

		From Lemma \ref{lem:niezmienniczosc na obroty} we obtain that for each $w_{1,i} \in V_1$ there exists exactly one $w_{2,i}\in V_2$ such that after a rotation by angle $\frac{2\pi}{k}$ (counter clockwise) it will coincide with $w_{2,i} $.
		In particular, the sets  $\mathbb{I}_1 $ i $\mathbb{I}_2 $ have the same cardinality. Number of their elements will be denoted as $n$.\\
		We will now pair the points $w_{1,i}$ and $w_{2,i}$ which coincide after rotation. We can renumerate them in such way that in each pair the second indexes are the same $(w_{1,1},w_{2,1}),\dots,(w_{1,n},w_{2,n})$. For each pair $ (w_{1,i},w_{2,i})$ we introduce the difference of labels
		$$
		r_i = \ell(w_{1,i})-\ell(w_{2,i}).
		$$
		We will show that
		$$
		r_1\equiv r_2\equiv \dots \equiv r_n\quad \mod k
		$$ 
		and that it is sufficient to prove that the whole $\mathcal{K}^{\langle 1\rangle}$ can be well labeled.
					
		Firstly, we show that it is sufficient.
		
		Since labels in pairs differ by $r \mod k$, then the vertices of complexes in $W_2$ have labels increased by $r \mod k$ from the labels of respective vertices from $W_1$. Analogoulsly, the labels of vertices in $W_3$ are increased by $2r \mod k$ from respective vertices from $W_1$, ... and in $W_k$ are increased by $(k-1)r \mod k$.
		By continuing this operation on complexes in $W_1$ we would not change labels of its vertices, because they would be increased by $kr \equiv 0 \mod k$. Therefore each vertex is given exactly one label and the orientation of labels on each complex is preserved. This means that the labeling is good.

		Secondly, we show that $ r_i\equiv  r_j \mod k$ for each $i$ and $j$.
		
		If all differences $r_i$ are equivalent $\mod k$, then the proof is over. Therefore let us assume that there exist at least two differences.
		
		We pick pairs $ (w_{1,1}, w_{2,1}), (w_{1,2},w_{2,2})$ of vertices such that the differences of labels $r_1$ and $r_2$ are distinct. We introduce $r_{21}:= \ell(w_{2,1}) - \ell(w_{1,2})$.
		
		The points $w_{2,1}, w_{2,2}$ are vertices of $0$-complexes from $W_2$. After rotation by angle $\frac{2\pi}{k}$ (counter clockwise) $ w_{2,1}$ and $w_{2,2} $ will coincide with vertices which will be denoted as $w_{3,1}$ and $ w_{3,2}$. Analogoulsy $w_{1,1}$ and $w_{1,2}$ will coincide after rotation with $ w_{2,1}$ and $w_{2,2}$, as it can be seen at Figure \ref{fig:wycinek}.

	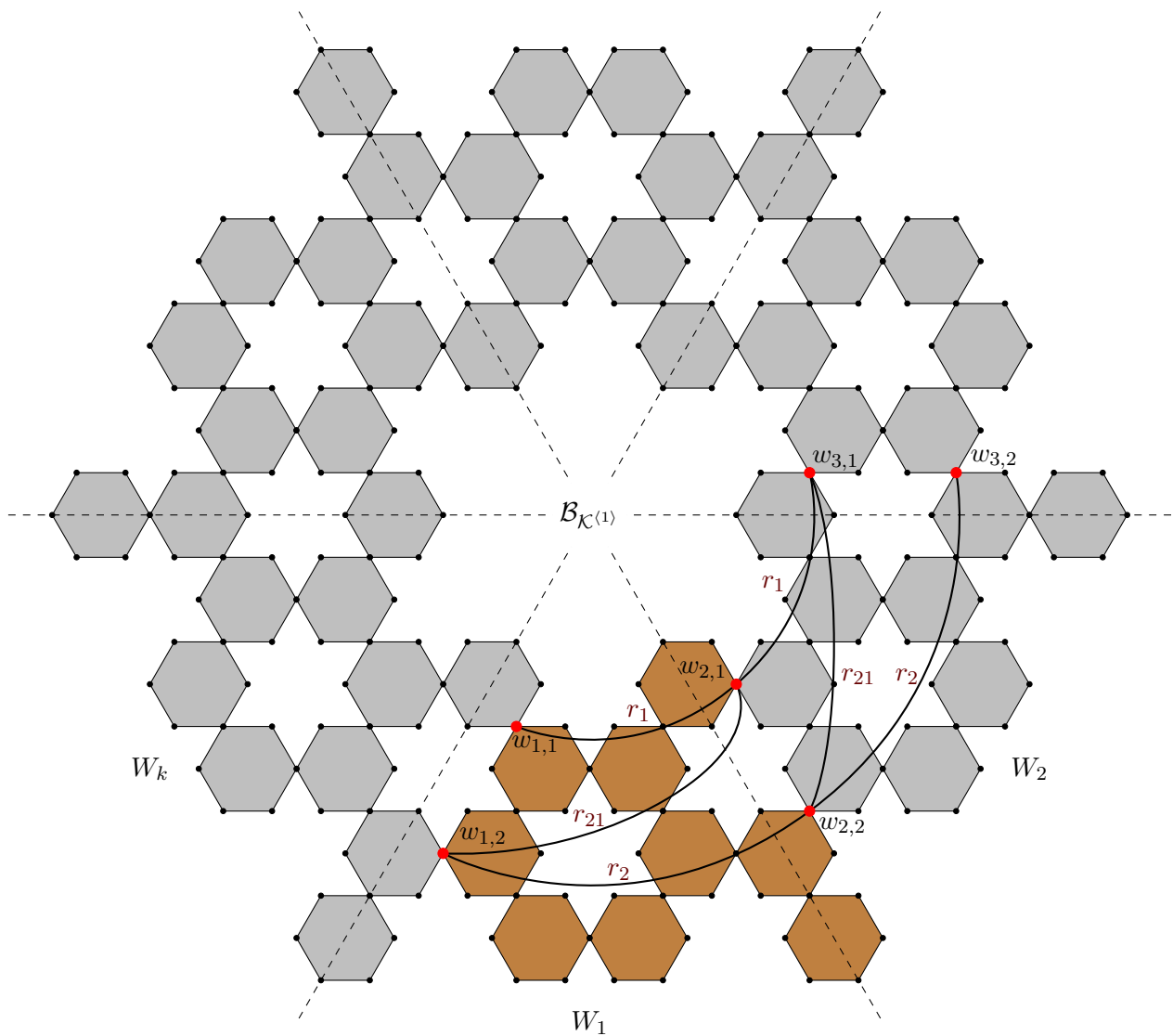
\begin{figure}[H]
			\begin{center}
			\begin{tikzpicture}[scale=0.7]
				\sz{(\kat{0}:2)}
				\sz{$(\kat{0}:2)+(\kat{1}:2)$}
				\szb{$2*(\kat{0}:2)+(\kat{1}:2)$}
				\szb{$2*(\kat{0}:2)+2*(\kat{1}:2)$}
				\sz{$2*(\kat{0}:2)+2*(\kat{1}:2)+(\kat{2}:2)$}
				\szb{$3*(\kat{0}:2)+2*(\kat{1}:2)$}
				\szb{$3*(\kat{0}:2)+3*(\kat{1}:2)$}
				\szb{$2*(\kat{0}:2)+(\kat{1}:2)+(\kat{5}:2)$}
				\szb{$3*(\kat{0}:2)+(\kat{1}:2)+(\kat{5}:2)$}
				\szb{$3*(\kat{0}:2)+2*(\kat{1}:2)+(\kat{5}:2)$}
				\szb{$4*(\kat{0}:2)+2*(\kat{1}:2)+(\kat{5}:2)$}
				\szb{$4*(\kat{0}:2)+2*(\kat{1}:2)+2*(\kat{5}:2)$}
				
				\sz{$4*(\kat{0}:2)+3*(\kat{1}:2)+(\kat{5}:2)$}
				\sz{$4*(\kat{0}:2)+3*(\kat{1}:2)+(\kat{2}:2)+(\kat{5}:2)$}
				\sz{$4*(\kat{0}:2)+4*(\kat{1}:2)+(\kat{2}:2)+(\kat{5}:2)$}
				\sz{$4*(\kat{0}:2)+4*(\kat{1}:2)+2*(\kat{2}:2)+(\kat{5}:2)$}
				\sz{$5*(\kat{0}:2)+4*(\kat{1}:2)+(\kat{2}:2)+(\kat{5}:2)$}
				\sz{$5*(\kat{0}:2)+4*(\kat{1}:2)+(\kat{2}:2)+2*(\kat{5}:2)$}
				\sz{$4*(\kat{0}:2)+6*(\kat{1}:2)+(\kat{2}:2)+2*(\kat{5}:2)$}
				\sz{$5*(\kat{0}:2)+6*(\kat{1}:2)+(\kat{2}:2)+2*(\kat{5}:2)$}
				\sz{$5*(\kat{0}:2)+3*(\kat{1}:2)+(\kat{5}:2)$}
				
				\sz{$4*(\kat{0}:2)+6*(\kat{1}:2)+2*(\kat{2}:2)+2*(\kat{5}:2)$}
				\sz{$3*(\kat{0}:2)+6*(\kat{1}:2)+2*(\kat{2}:2)+2*(\kat{5}:2)$}
				\sz{$3*(\kat{0}:2)+6*(\kat{1}:2)+3*(\kat{2}:2)+2*(\kat{5}:2)$}
				\sz{$3*(\kat{0}:2)+7*(\kat{1}:2)+3*(\kat{2}:2)+2*(\kat{5}:2)$}
				\sz{$4*(\kat{0}:2)+7*(\kat{1}:2)+3*(\kat{2}:2)+2*(\kat{5}:2)$}
				\sz{$4*(\kat{0}:2)+7*(\kat{1}:2)+3*(\kat{2}:2)+3*(\kat{5}:2)$}
				\sz{$2*(\kat{0}:2)+6*(\kat{1}:2)+3*(\kat{2}:2)+2*(\kat{5}:2)$}
				\sz{$2*(\kat{0}:2)+8*(\kat{1}:2)+3*(\kat{2}:2)+2*(\kat{5}:2)$}
				\sz{$2*(\kat{0}:2)+9*(\kat{1}:2)+3*(\kat{2}:2)+2*(\kat{5}:2)$}
				
				\sz{$(\kat{0}:2)+8*(\kat{1}:2)+3*(\kat{2}:2)+2*(\kat{5}:2)$}
				\sz{$(\kat{0}:2)+7*(\kat{1}:2)+3*(\kat{2}:2)+2*(\kat{5}:2)$}
				\sz{$7*(\kat{1}:2)+3*(\kat{2}:2)+2*(\kat{5}:2)$}
				\sz{$7*(\kat{1}:2)+3*(\kat{2}:2)+(\kat{5}:2)$}
				\sz{$6*(\kat{1}:2)+3*(\kat{2}:2)+2*(\kat{5}:2)$}
				\sz{$8*(\kat{1}:2)+3*(\kat{2}:2)+(\kat{5}:2)$}
				\sz{$(\kat{0}:2)+8*(\kat{1}:2)+3*(\kat{2}:2)+(\kat{5}:2)$}
				\sz{$7*(\kat{1}:2)+3*(\kat{2}:2)+(\kat{3}:2)+(\kat{5}:2)$}
				\sz{$7*(\kat{1}:2)+4*(\kat{2}:2)+(\kat{3}:2)+(\kat{5}:2)$}
				\sz{$7*(\kat{1}:2)+3*(\kat{2}:2)+(\kat{3}:2)+(\kat{4}:2)+(\kat{5}:2)$}
				
				\sz{$7*(\kat{1}:2)+3*(\kat{2}:2)+(\kat{3}:2)+(\kat{4}:2)+2*(\kat{5}:2)$}
				\sz{$7*(\kat{1}:2)+3*(\kat{2}:2)+2*(\kat{3}:2)+(\kat{4}:2)+(\kat{5}:2)$}
				\sz{$6*(\kat{1}:2)+3*(\kat{2}:2)+2*(\kat{3}:2)+(\kat{4}:2)+(\kat{5}:2)$}	
				\sz{$6*(\kat{1}:2)+2*(\kat{2}:2)+2*(\kat{3}:2)+(\kat{4}:2)+(\kat{5}:2)$}				 
				\sz{$6*(\kat{1}:2)+2*(\kat{2}:2)+(\kat{3}:2)+(\kat{4}:2)+(\kat{5}:2)$}				 
				\sz{$6*(\kat{1}:2)+(\kat{2}:2)+(\kat{3}:2)+(\kat{4}:2)+(\kat{5}:2)$}			
				\sz{$6*(\kat{1}:2)+(\kat{2}:2)+4*(\kat{3}:2)+(\kat{4}:2)+(\kat{5}:2)$}	
				\sz{$6*(\kat{1}:2)+(\kat{2}:2)+3*(\kat{3}:2)+(\kat{4}:2)+(\kat{5}:2)$}				 
				
				\sz{$6*(\kat{1}:2)+(\kat{2}:2)+3*(\kat{3}:2)+(\kat{4}:2)+2*(\kat{5}:2)$}	
				\sz{$6*(\kat{1}:2)+(\kat{2}:2)+2*(\kat{3}:2)+(\kat{4}:2)+2*(\kat{5}:2)$}			
				\sz{$6*(\kat{1}:2)+(\kat{2}:2)+2*(\kat{3}:2)+(\kat{4}:2)+3*(\kat{5}:2)$}				 
				\sz{$5*(\kat{1}:2)+(\kat{2}:2)+2*(\kat{3}:2)+(\kat{4}:2)+3*(\kat{5}:2)$}	
				\sz{$5*(\kat{1}:2)+(\kat{2}:2)+3*(\kat{3}:2)+(\kat{4}:2)+3*(\kat{5}:2)$}	
				\sz{$5*(\kat{1}:2)+(\kat{2}:2)+3*(\kat{3}:2)+(\kat{4}:2)+2*(\kat{5}:2)$}

				\coordinate (C) at (0,0);	 
				\path (C) ++(60:10cm) ++(0:2cm)coordinate (B);
				
				\foreach \x in {0,1,2,...,5}{
					\path (B) ++(\x *60:12cm) coordinate (B\x);
					\draw[dashed] (B)--(B\x);
				}
				\filldraw[white] (B) circle (21.5pt);
				\node at (B) {$\mathcal{B}_{\mathcal{K}^{\langle 1\rangle}}$};
				\path (B3)++(-60:6) node{$ W_k $};
				\path (B4)++(0:6) node{$ W_1 $};
				\path (B5)++(60:6) node{$ W_2 $};
				
				\path (C) ++(0:3cm)++(60:2) coordinate (w12) node[above right, xshift=1mm, yshift=-0.5mm]{$ w_{1,2} $};
				
				\path (w12)++(60:3) coordinate (w11)node[below right, xshift=-2mm]{$ w_{1,1} $};
				
				\path (w11)++(0:4)++(60:1) coordinate (w21)node[above left, yshift=-0.9mm]{$ w_{2,1} $};			
				
				\path (w21)++(-60:3) coordinate (w22)node[below right, yshift=0.5mm]{$ w_{2,2} $};
				
				\path (w21)++(60:4)++(120:1) coordinate (w31)node[above right, yshift=-1mm, xshift=-1mm]{$ w_{3,1} $};
				
				\path (w31)++(0:3) coordinate (w32)node[above right, yshift=-0.8mm, xshift=0.7mm]{$ w_{3,2} $};
				
				\draw[thick] plot [smooth,tension=1.3] coordinates{(w11)(w21)(w31)};
				\node[above,color=black!60!red] at (8,4.25){$ r_1 $};
				\node[left,color=black!60!red] at (11.2,7.25){$ r_1 $};
				\draw[thick] plot [smooth,tension=1.3] coordinates{(w12)(8.5,2.8)(w21)};
				\node[above,color=black!60!red] at (7.6,1){$ r_2 $};
				\node[above,color=black!60!red] at (13.5,5){$ r_2 $};
				\draw[thick] plot [smooth,tension=1.3] coordinates{(w12)(w22)(w32)};
				\draw[thick] plot [smooth,tension=1.3] coordinates{(w22)(12,6)(w31)};
				\node[above,color=black!60!red] at (12.5,5){$ r_{21} $};
				\node[above,color=black!60!red] at (7,2.1){$ r_{21} $};
				\filldraw [red] (w12) circle (3pt);
				\filldraw [red] (w11) circle (3pt);
				\filldraw [red] (w21) circle (3pt);
				\filldraw [red] (w22) circle (3pt);
				\filldraw [red] (w31) circle (3pt);
				\filldraw [red] (w32) circle (3pt);
			\end{tikzpicture}
			\label{fig:wycinek}
		\end{center}
		\caption{Slices $W_1$, $W_2$ and $W_k$ in $\mathcal{K}^{\langle 1\rangle}$ for $k=6$. The slice $W_1$ is colored brown. Vertices bordering slices are marked red. Lines connecting red vertices are denoted with differences $r_i$ between labels.}	
	\end{figure}
	
	We assumed that vertices of all complexes $\Delta_0 \subset W_1 \cup W_2$ are well labeled, so in particular $w_{1,1}, w_{1,2}, w_{2,1}, w_{2,2}, w_{3,1}, w_{3,2}$ have assigned labels.
	
	Due to rotation invariance $ \ell(w_{3,1}) - \ell(w_{2,1}) \equiv r_1 \mod k$, as well as $\ell(w_{3,2})-\ell(w_{2,2}) \equiv r_2 \mod k$ and $\ell(w_{3,1}) - \ell(w_{2,2}) \equiv r_{21}\mod k.$ 
				
		Therefore
	\begin{align*}
		r_1 + r_{21} = &(\ell(w_{3,1}) - \ell(w_{2,1})) + (\ell(w_{2,1}) - \ell(w_{1,2}))  = \ell(w_{3,1}) - \ell(w_{1,2}) = \\
		&(\ell(w_{3,1}) - \ell(w_{2,2})) + (\ell(w_{2,2}) - \ell(w_{1,2})) \equiv r_{21} + r_2 \mod k
		\end{align*}
		
		What gives
		$$
		r_1\equiv r_2\mod k
		$$
		contrary to our assumption.
	\end{proof}
	The natural question arises. Why did we need to assume good labeling of "two $k$-th" of the fractal, not only "one $k$-th"?
	The answer for odd $k$ is given by Example 3.5 in \cite{bib:KOPP}, where one can label "one $k$-th" of $\mathcal{K}^{\langle 1\rangle}$, but the fractal does not have GLP.
	
	In case of even $k$ further reduction is possible.
	\begin{definition}
The closed slice $\overline{W_i}$ is the sum of $0$-complexes $\Delta_0$, which centers lie in the closure of area $U_i$, but still without the barycenter $\mathcal{K}^{\langle 1\rangle}$.
	\end{definition}
	Naturally, in this case intersections of neighbor $\overline{W_j}$'s are nonempty.

	The necessity of considering closed slices is shown at Figure \ref{W1dla10}. We can see that good labeling of $W_1$ is possible, because we can divide complexes $ \Delta_{0} \subset W_1$ into two classess. The closed slice $\overline{W_1}$ consisting of additional brown complexes cannot be well labeled and the whole fractal does not have GLP.

	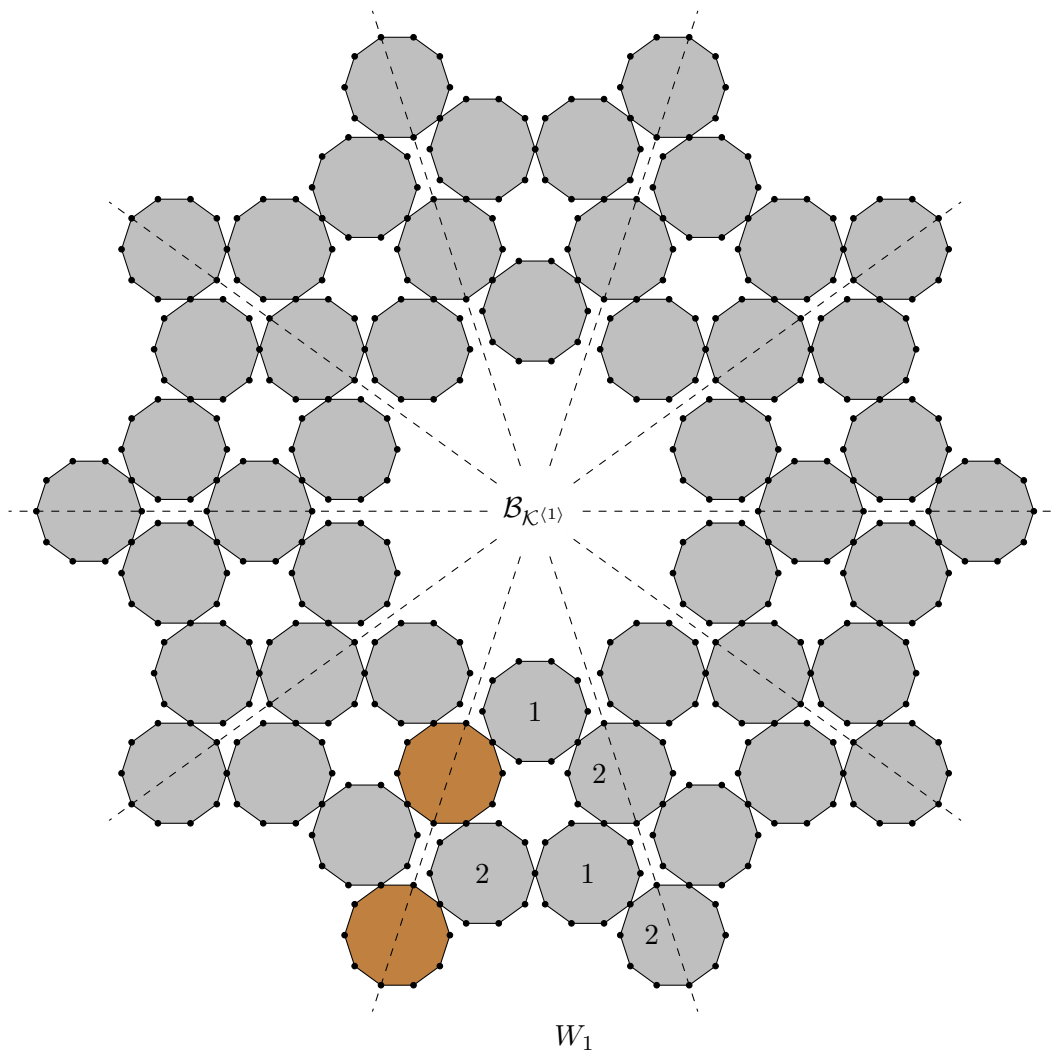
\begin{figure}[H]
		\centering
		\begin{tikzpicture}[scale=0.7]
			\ds{(\kt{0}:0)}
			\node at($(\kt{0}:0)$) {2};
			\dsone{$(\kt{0}:2)$}
			\dstwo{$(\kt{0}:2)+(\kt{2}:2)$}
			\dsone{$(\kt{0}:2)+(\kt{2}:2)+(\kt{4}:2)$}
			\dsoneb{$(\kt{3}:2)$}
			\dsoneb{$(\kt{6}:2)$}
			\dstwo{$(\kt{0}:2)+(\kt{9}:2)$}
			
			\ds{$(\kt{0}:2)+(\kt{2}:2)+(\kt{9}:2)$}
			\ds{$(\kt{0}:2)+(\kt{1}:2)+(\kt{2}:2)+(\kt{9}:2)$}
			\ds{$(\kt{0}:2)+(\kt{1}:2)+(\kt{2}:2)+(\kt{3}:2)+(\kt{9}:2)$}
			\ds{$(\kt{1}:2)+(\kt{2}:2)+(\kt{3}:2)+(\kt{9}:2)$}
			\ds{$2*(\kt{0}:2)+(\kt{1}:2)+(\kt{2}:2)+(\kt{9}:2)$}
			
			\ds{$2*(\kt{0}:2)+(\kt{1}:2)+(\kt{2}:2)+(\kt{3}:2)+(\kt{9}:2)$}
			\ds{$2*(\kt{0}:2)+(\kt{1}:2)+2*(\kt{2}:2)+(\kt{3}:2)+(\kt{9}:2)$}
			\ds{$2*(\kt{0}:2)+(\kt{1}:2)+2*(\kt{2}:2)+(\kt{3}:2)$}
			\ds{$2*(\kt{0}:2)+2*(\kt{2}:2)+(\kt{3}:2)$}
			\ds{$2*(\kt{0}:2)+2*(\kt{1}:2)+2*(\kt{2}:2)+(\kt{3}:2)+(\kt{9}:2)$}
			
			\ds{$2*(\kt{0}:2)+2*(\kt{1}:2)+2*(\kt{2}:2)+(\kt{3}:2)$}
			\ds{$2*(\kt{0}:2)+2*(\kt{1}:2)+2*(\kt{2}:2)+2*(\kt{3}:2)$}
			\ds{$2*(\kt{0}:2)+2*(\kt{1}:2)+3*(\kt{2}:2)+2*(\kt{3}:2)$}
			\ds{$(\kt{0}:2)+2*(\kt{1}:2)+2*(\kt{2}:2)+2*(\kt{3}:2)$}
			\ds{$(\kt{0}:2)+2*(\kt{1}:2)+(\kt{2}:2)+2*(\kt{3}:2)$}
			
			\ds{$(\kt{0}:2)+2*(\kt{1}:2)+3*(\kt{2}:2)+2*(\kt{3}:2)$}
			\ds{$2*(\kt{1}:2)+2*(\kt{2}:2)+2*(\kt{3}:2)$}
			\ds{$(\kt{0}:2)+2*(\kt{1}:2)+3*(\kt{2}:2)+2*(\kt{3}:2)+(\kt{4}:2)$}
			\ds{$2*(\kt{1}:2)+2*(\kt{2}:2)+3*(\kt{3}:2)$}
			\ds{$(\kt{0}:2)+2*(\kt{1}:2)+3*(\kt{2}:2)+3*(\kt{3}:2)+(\kt{4}:2)$}
			
			\ds{$2*(\kt{1}:2)+2*(\kt{2}:2)+4*(\kt{3}:2)$}
			\ds{$2*(\kt{1}:2)+2*(\kt{2}:2)+4*(\kt{3}:2)+(\kt{5}:2)$}
			\ds{$2*(\kt{1}:2)+(\kt{2}:2)+4*(\kt{3}:2)+(\kt{5}:2)$}
			\ds{$2*(\kt{1}:2)+2*(\kt{2}:2)+4*(\kt{3}:2)+(\kt{4}:2)+(\kt{5}:2)$}
			\ds{$2*(\kt{1}:2)+(\kt{2}:2)+4*(\kt{3}:2)+(\kt{5}:2)+(\kt{9}:2)$}
			
			\ds{$2*(\kt{1}:2)+4*(\kt{3}:2)+(\kt{5}:2)$}
			\ds{$2*(\kt{1}:2)+(\kt{2}:2)+4*(\kt{3}:2)+(\kt{4}:2)+(\kt{5}:2)$}
			\ds{$(\kt{1}:2)+(\kt{2}:2)+4*(\kt{3}:2)+(\kt{4}:2)+(\kt{5}:2)$}
			\ds{$(\kt{1}:2)+(\kt{2}:2)+4*(\kt{3}:2)+(\kt{4}:2)+2*(\kt{5}:2)$}
			\ds{$2*(\kt{1}:2)+4*(\kt{3}:2)+2*(\kt{5}:2)$}
			
			\ds{$2*(\kt{1}:2)+4*(\kt{3}:2)+3*(\kt{5}:2)$}
			\ds{$2*(\kt{1}:2)+4*(\kt{3}:2)+3*(\kt{5}:2)+(\kt{7}:2)$}
			\ds{$2*(\kt{1}:2)+3*(\kt{3}:2)+2*(\kt{5}:2)$}
			\ds{$(\kt{1}:2)+4*(\kt{3}:2)+3*(\kt{5}:2)+(\kt{7}:2)$}
			\ds{$(\kt{1}:2)+3*(\kt{3}:2)+2*(\kt{5}:2)$}
			
			\ds{$3*(\kt{3}:2)+2*(\kt{5}:2)$}
			\ds{$(\kt{1}:2)+3*(\kt{3}:2)+2*(\kt{5}:2)+(\kt{9}:2)$}
			\ds{$2*(\kt{3}:2)+2*(\kt{5}:2)$}
			\ds{$(\kt{1}:2)+3*(\kt{3}:2)+2*(\kt{5}:2)+(\kt{7}:2)+(\kt{9}:2)$}
			\ds{$(\kt{1}:2)+3*(\kt{3}:2)+3*(\kt{5}:2)+2*(\kt{7}:2)+(\kt{9}:2)$}
			
			\ds{$(\kt{1}:2)+3*(\kt{3}:2)+(\kt{5}:2)+(\kt{7}:2)+(\kt{9}:2)$}
			\ds{$(\kt{1}:2)+3*(\kt{3}:2)+2*(\kt{5}:2)+2*(\kt{7}:2)+(\kt{9}:2)$}
			\ds{$(\kt{1}:2)+3*(\kt{3}:2)+2*(\kt{5}:2)+2*(\kt{7}:2)+2*(\kt{9}:2)$}
			
			\coordinate (bar) at ($(\kt{0}:2)+2*(\kt{2}:2)+(\kt{3}:2)+(\kt{4}:2)$);	 
			\foreach \x in {0,1,2,...,9}{
				\path (bar) ++(\x *36:10cm) coordinate (B\x);
				\draw[dashed] (bar)--(B\x);
			}
			\filldraw[white] (bar) circle (21.5pt);
			\node at (bar) {$\mathcal{B}_{\mathcal{K}^{\langle 1\rangle}}$};
			\path (bar) ++(0.75,-10) coordinate (W1) node{$ W_1 $};
		\end{tikzpicture}
	\caption{Example of a fractal with $k=10$ not having GLP. Complexes in the closed slice $\overline{W_1}$ cannot be divided into two separate classes according to Theorem \ref{thm:even}.}\label{W1dla10}
	\end{figure}
	
	\begin{theorem}
\label{thm:closedslice}
		If 
		\begin{itemize}
		\item $k>7$ is even \\
		or 
		\item $k=6$ and $ \mathcal{B}_{\mathcal{K}^{\langle 1\rangle}} \notin \mathcal{K}^{\langle 1\rangle} $,
		\end{itemize}
		then $\mathcal{K}^{\langle \infty\rangle}$ has GLP if and only if vertices of complexes in a closed slice $\overline{W_i}$ can be well labeled.
		\end{theorem}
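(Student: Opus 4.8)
The plan is to run the whole argument through the even-$k$ criterion of Theorem~\ref{thm:even}: for $k$ even, $\mathcal{K}^{\langle\infty\rangle}$ has GLP if and only if the adjacency graph $G$, whose vertices are the $0$-complexes of $\mathcal{K}^{\langle1\rangle}$ and whose edges are pairs of complexes sharing a vertex, admits a proper $2$-colouring. The implication ``$\mathcal{K}^{\langle\infty\rangle}$ has GLP $\Rightarrow$ $\overline{W_i}$ can be well labelled'' is immediate, by restriction. For the converse (which refines Theorem~\ref{tw:niep}), I would fix a proper $2$-colouring $\chi_1\colon\overline{W_1}\to\{0,1\}$ of the subgraph of $G$ induced by one closed slice and extend it to a proper $2$-colouring of all of $G$.

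First I would record how complexes sit relative to the slices. Combining the interior-angle computation from the proof of Lemma~\ref{a} with Lemma~\ref{D0 w BK1} (so that in the cases considered $\mathcal{B}_{\mathcal{K}^{\langle1\rangle}}\notin\mathcal{K}^{\langle1\rangle}$ and no two axes of $\mathcal{K}^{\langle1\rangle}$ meet at the barycentre of a $0$-complex), one gets that each $0$-complex is centred either strictly inside one open region $U_i$, whence it lies in $W_i$, or on exactly one sector-bounding axis $\ell_i$, whence it is symmetric in $\ell_i$ and lies in $\overline{W_i}\cap\overline{W_{i+1}}$; these last complexes are precisely the overlap $\overline{W_i}\cap\overline{W_{i+1}}$. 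Since $3\cdot\frac{k-2}{k}\pi>2\pi$ for $k>6$, at most two complexes meet at any vertex when $k\ge 8$ (the case $k=6$ with no central complex is treated analogously), so an edge of $G$ is the same datum as a shared vertex, and every edge is of one of two kinds: \textbf{(i)} both endpoints in a common closed slice $\overline{W_j}$, or \textbf{(ii)} a ``mirror pair'' $\{\Delta,s_{\ell_j}\Delta\}$ with $\Delta\subset W_j$, $s_{\ell_j}\Delta\subset W_{j+1}$, where $s_{\ell_j}$ is the reflection in $\ell_j$.

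Next I would propagate $\chi_1$ around $\mathcal{K}^{\langle1\rangle}$ by the rotation $\rho$ of angle $\frac{2\pi}{k}$, which maps $\overline{W_j}$ onto $\overline{W_{j+1}}$ and is an automorphism of $G$ (Lemma~\ref{lem:niezmienniczosc na obroty}). On $\overline{W_j}$ set $\chi(\Delta):=\chi_1(\rho^{1-j}\Delta)\oplus\varepsilon_j$ with $\varepsilon_1=0$ and $\varepsilon_j\in\mathbb{Z}/2\mathbb{Z}$ to be chosen. Single-valuedness on the overlaps $\overline{W_j}\cap\overline{W_{j+1}}$ forces $\varepsilon_{j+1}\oplus\varepsilon_j=d$, where $d:=\chi_1(\Delta')\oplus\chi_1(\rho^{-1}\Delta')$ for $\Delta'$ a complex of $\overline{W_1}$ centred on $\ell_1$; here $d$ is independent of $\Delta'$ because those complexes form a connected chain in $\overline{W_1}$. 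Taking $\varepsilon_j=(j-1)d$ solves all these equations, and the assignment closes up after one full turn precisely because $k$ is even, so that $\varepsilon_{k+1}=kd\equiv 0=\varepsilon_1\pmod 2$. This ``$kd\equiv0$'' is the point where the evenness of $k$ is used a second time, beyond invoking Theorem~\ref{thm:even}, and is the reason a single \emph{closed} slice (rather than a single open slice) is the right object.

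Finally I would verify that $\chi$ is proper on all of $G$. On edges of kind (i) this is clear, since on $\overline{W_j}$ the map $\chi$ is $\chi_1\circ\rho^{1-j}$ up to the global flip $\varepsilon_j$. On edges of kind (ii), writing $\Gamma:=\rho^{1-j}\Delta\subset\overline{W_1}$ (so $\Gamma$ has a vertex on $\ell_1$) and conjugating $s_{\ell_j}$ and the rotations, one reduces to the single requirement
\[
\chi_1(\Gamma)\oplus\chi_1(\sigma_1\Gamma)=d\oplus 1\qquad\text{for every }\Gamma\subset\overline{W_1}\text{ with a vertex on }\ell_1,
\]
where $\sigma_1$ is the reflection of $\mathcal{K}^{\langle1\rangle}$ in the mid-axis of the sector $U_1$, which maps $\overline{W_1}$ onto itself. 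Establishing this identity is the main obstacle: it asserts that a good $2$-colouring of the single closed slice $\overline{W_1}$ already encodes the mirror-symmetric constraint across its bounding axes. I expect to obtain it from connectivity of $\overline{W_1}$ and of its boundary chains, together with the observation that a violation would produce an odd cycle inside $\overline{W_1}\cup\sigma_1(\overline{W_1})=\overline{W_1}$, contradicting properness of $\chi_1$; the delicate part is making this precise and, in particular, controlling exactly how the two boundary chains of $\overline{W_1}$ are coloured relative to one another. Granting it, $\chi$ is a proper $2$-colouring of $G$, and Theorem~\ref{thm:even} yields GLP for $\mathcal{K}^{\langle\infty\rangle}$.
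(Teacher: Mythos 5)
There is a genuine gap, and you have flagged it yourself: the identity $\chi_1(\Gamma)\oplus\chi_1(\sigma_1\Gamma)=d\oplus 1$, on which properness of $\chi$ across the bounding axes rests, is never established, and the auxiliary claim that $d$ is well defined (``those complexes form a connected chain in $\overline{W_1}$'') is likewise only asserted. Both amount to saying that the reflection $\sigma_1$ permutes the two colour classes of the induced subgraph on $\overline{W_1}$ in a uniform way; this would follow if that induced subgraph were connected, but connectivity of a single closed slice is itself unproven and is not an obvious consequence of the connectivity axiom for $\mathcal{K}^{\langle 1\rangle}$. So, as written, the converse implication is incomplete precisely at the step you call the main obstacle.

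The paper's proof avoids the problem entirely by transporting the two-class division from $\overline{W_i}$ to $\overline{W_{i+1}}$ with the \emph{reflection} in the axis separating them, rather than with the rotation $\rho$. It first checks that any complex of $\overline{W_i}$ adjacent to a complex of $W_{i+1}$ must itself be centred on that axis, hence lies in $\overline{W_i}\cap\overline{W_{i+1}}$ and is fixed by the reflection; consequently the reflected colouring agrees with the original on the overlap, and every edge of the adjacency graph inside $\overline{W_i}\cup\overline{W_{i+1}}$ lies entirely within one of the two closed slices, so properness is automatic and no mirror identity is needed. This yields a good labelling of $W_i\cup W_{i+1}$, and the propagation around all $k$ slices --- which your $\varepsilon_j$ construction re-derives by hand, including the $kd\equiv 0$ closing-up condition --- is exactly the content of Theorem \ref{tw:niep}, which the paper simply invokes at that point. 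Replacing your rotation step by this single reflection and then citing Theorem \ref{tw:niep} closes the proof.
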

	
	\begin{proof}
		We will show that good labeling of a closed slice $\overline{W_i}$ implies good labeling of a neighbor slice $W_{i+1}$.
		
		At first notice that complexes $\Delta_{0} \subset \overline{W_i}$ adjacent to complexes $\Delta_{0}^{'} \subset W_{i+1}$ must intersect the symmetry axis of $ \mathcal{K}^{\langle 1 \rangle}$. They cannot be tangent to the axis, because it would imply that two complexes meet at two vertices (and have a common edge).
		
		Since $\overline{W_i}$ can be well labeled, we use Theorem \ref{thm:even} to divide complexes $\Delta_{0} \subset \overline{W_i}$ into two separate classes. 
		Then we "reflect" these classes onto $\overline{W_{i+1}}$ by symmetry with respect to axis between $\overline{W_i}$ and $\overline{W_{i+1}}$.
		We have now well labeled $\overline{W_i}\cup \overline{W_{i+1}}$. And since $\overline{W_i}\cup \overline{W_{i+1}}\supset W_i\cup W_{i+1}$, we have also well labeled $ W_i\cup W_{i+1} $.
		We then follow the steps of the proof of Theorem \ref{tw:niep}.
	\end{proof}
	\subsection{GLP for fractals with $k=2^n$}
	We know that if fractal has a prime number of essential fixed points, then it has GLP. In \cite{bib:KOPP} it was also proved that if $k=4$, then the fractal has GLP.
	There was a conjecture, that for $k$ being higher powers of 2 GLP is certain.
	
		\begin{theorem}
		\label{thm:2^n}
		If the fractal has $2^n$ essential fixed points, $n \in \mathbb{N}, n>1$, then it has GLP.
		\end{theorem}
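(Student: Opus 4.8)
The plan is to push everything through the combinatorial characterization of Theorem~\ref{thm:even}. Since $k=2^{n}$ is even, that theorem says $\mathcal{K}^{\langle\infty\rangle}$ has the GLP if and only if the intersection graph $G$ whose vertices are the $0$-complexes contained in $\mathcal{K}^{\langle 1\rangle}$ and whose edges join complexes sharing a vertex is bipartite, i.e.\ has no odd cycle. So the whole task reduces to ruling out odd cycles in $G$. Two preliminary observations set the stage. First, the $0$-complexes inside $\mathcal{K}^{\langle 1\rangle}$ are exactly the translates $\mathcal{K}^{\langle 0\rangle}+L\nu_{i}$, $i=1,\dots,N$, so they all carry the same orientation and each has convex hull a translate of one fixed regular $k$-gon $K$ of circumradius $R$; moreover, since $k$ is even, the symmetry condition makes $K$ (and in fact $\mathcal{K}^{\langle 0\rangle}$ itself) centrally symmetric. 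Writing $c_{i}$ for the barycentre of the $i$-th complex and $\omega=e^{2\pi i/k}$, the vertices of that complex are the points $c_{i}+R\omega^{m}$, $0\le m\le k-1$.

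Second, I would use (or reprove) the structural fact --- the even-$k$ case of Fact~3.1 in \cite{bib:KOPP} --- that two adjacent $0$-complexes are related by the rotation by $\pi$ about their common vertex; equivalently, $c_{i}-c_{j}=2R\omega^{m}$ for some $m$. A short way to see this directly: sharing a vertex forces $c_{i}-c_{j}=R(\omega^{a}-\omega^{b})$, the Open Set Condition forces the open hulls to be disjoint, which for translates of the centrally symmetric body $K$ means $\tfrac12(c_{i}-c_{j})$ lies outside the interior of $K$; since $\bigl|\tfrac12(c_{i}-c_{j})\bigr|=R\sin(\pi d/k)$ where $d\in\{1,\dots,k/2\}$ is the cyclic distance from $a$ to $b$, comparison with the inradius $R\cos(\pi/k)$ leaves only $d\in\{k/2-1,k/2\}$, and $d=k/2-1$ is excluded because then the two hulls would share a full edge, which the Nesting and Symmetry structure forbids. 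Thus $d=k/2$ and $\omega^{a}-\omega^{b}=2\omega^{a}$.

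The arithmetic input is the elementary lemma that a vanishing sum of $2^{n}$-th roots of unity has an even number of summands: if $\sum_{m=0}^{k-1}a_{m}\omega^{m}=0$ with integers $a_{m}\ge 0$, then, because $\{1,\omega,\dots,\omega^{k/2-1}\}$ is a $\mathbb{Z}$-basis of $\mathbb{Z}[\omega]$ and $\omega^{k/2}=-1$, the relation forces $a_{m}=a_{m+k/2}$ for $0\le m<k/2$, hence $\sum_{m}a_{m}=2\sum_{0\le m<k/2}a_{m}$ is even. This is the sole place where ``$k$ is a power of $2$'' is used; it genuinely fails for other even $k$ (e.g.\ $1+\omega^{2}+\omega^{4}=0$ for a primitive sixth root of unity), which matches the known fact that GLP is not automatic for such $k$.

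Putting these together: if $G$ had an odd cycle $\Delta_{i_{1}},\dots,\Delta_{i_{\ell}},\Delta_{i_{1}}$ with $\ell$ odd, then telescoping the barycentre displacements around the cycle and inserting the structural fact gives $\sum_{s=1}^{\ell}2R\,\omega^{m_{s}}=0$, i.e.\ $\sum_{s=1}^{\ell}\omega^{m_{s}}=0$ is a vanishing sum of $\ell$ $k$-th roots of unity with $\ell$ odd, contradicting the lemma. Hence $G$ is bipartite and, by Theorem~\ref{thm:even}, $\mathcal{K}^{\langle\infty\rangle}$ has the GLP. I expect the main obstacle to be the structural fact in the second paragraph --- excluding every vertex-to-vertex gluing except the ``antipodal'' one, where the Open Set, Nesting and Symmetry conditions all have to be brought in; the conceptual heart of the theorem, though, is the observation that bipartiteness of $G$ is obstructed precisely by short vanishing sums of $k$-th roots of unity, and the power-of-two hypothesis is exactly what removes the odd ones.
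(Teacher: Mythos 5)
Your proposal is correct and follows essentially the same route as the paper: reduce GLP to bipartiteness of the adjacency graph of $0$-complexes via Theorem~\ref{thm:even}, use the structural fact that adjacent complexes are antipodal reflections through their common vertex so that a cycle yields a vanishing integer combination of $k$-th roots of unity, and then use $\omega^{k/2}=-1$ together with the degree $\phi(2^n)=2^{n-1}$ of the minimal (cyclotomic) polynomial to force the paired coefficients to be equal, making the cycle length even. Your phrasing via the $\mathbb{Z}$-basis $\{1,\omega,\dots,\omega^{k/2-1}\}$ of $\mathbb{Z}[\omega]$ is just a repackaging of the paper's minimal-polynomial degree count, and your added sketch of the antipodal-adjacency fact is material the paper simply cites from \cite{bib:KOPP}.
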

		
		The main idea of the proof is to show that any cycle of adjacent complexes $\Delta_0 \subset \mathcal{K}^{\langle 1 \rangle}$ has even length.\\
		
		\begin{lemma}
		\label{lem:cykle}
		Fractal $\mathcal{K}^{\langle\infty\rangle}$ with even $k$ has GLP if and only if any closed path consisting of vectors connecting centers of adjacent complexes $\Delta_0 \subset \mathcal{K}^{\langle 1 \rangle}$ has even length.
		\end{lemma}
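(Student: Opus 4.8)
The plan is to translate Theorem~\ref{thm:even} into the language of graphs and then apply the classical characterization of bipartite graphs. Introduce the \emph{adjacency graph} $G$ of $\mathcal{K}^{\langle 1\rangle}$: its vertices are the $0$-complexes $\Delta_0 \subset \mathcal{K}^{\langle 1\rangle}$, and two distinct $0$-complexes are joined by an edge exactly when they intersect (by the nesting condition this means they share a vertex). The open set condition forces distinct $0$-complexes to have disjoint interiors, hence distinct barycenters, so $\Delta_0 \mapsto \mathcal{B}_{\Delta_0}$ is injective; under this identification a vector connecting the centers of two adjacent complexes is precisely a geometric realization of an edge of $G$, and a closed path built from such vectors is precisely a closed walk in $G$, whose length equals the number of vectors traversed. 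The empty closed path has length $0$, so isolated complexes cause no trouble.

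With this dictionary in place, Theorem~\ref{thm:even} says that $\mathcal{K}^{\langle\infty\rangle}$ has GLP if and only if the vertices of $G$ can be split into two classes so that no edge joins two vertices of the same class, i.e. if and only if $G$ is bipartite. (The connectivity condition in Definition~\ref{def:snf}, rescaled by $L$, shows that $G$ is connected, but this will not be needed.) Hence the lemma reduces to the purely combinatorial fact: a graph $G$ is bipartite if and only if every closed walk in $G$ has even length.

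For that fact I would give the standard argument in both directions. If $G$ is bipartite with classes $A$ and $B$, then traversing any walk switches the class of the current vertex at every step, so a walk returning to its starting vertex uses an even number of steps. Conversely, suppose every closed walk has even length; on each connected component fix a base vertex $v_0$ and colour each vertex $v$ by the parity of the length of some walk from $v_0$ to $v$. This is well defined: two such walks, one reversed, concatenate to a closed walk, which by hypothesis has even length, so the two walks have equal parity. Since extending a walk by one edge flips the parity, adjacent vertices receive opposite colours, so this is a proper $2$-colouring and $G$ is bipartite. Combining the two equivalences with Theorem~\ref{thm:even} proves Lemma~\ref{lem:cykle}.

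The only point that needs real care is the translation in the first paragraph — checking that ``closed path of centre-connecting vectors'' and ``closed walk in $G$'' are genuinely the same notion, in particular that repetitions of complexes and of edges must be allowed on both sides (since $G$ may contain odd cycles through which a walk can loop) and that the two notions of length agree under the injection $\Delta_0 \mapsto \mathcal{B}_{\Delta_0}$. Everything after that is the textbook equivalence ``bipartite $\Leftrightarrow$ no odd closed walk'', applied to the graph produced by Theorem~\ref{thm:even}.
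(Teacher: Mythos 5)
Your proposal is correct and follows essentially the same route as the paper: build the adjacency graph on the barycenters of the $0$-complexes, recast Theorem \ref{thm:even} as bipartiteness of that graph, and invoke the classical equivalence between bipartiteness and the absence of odd closed walks (which the paper simply cites from general graph theory while you spell out the standard proof).
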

		\begin{proof}
		We consider a graph with vertices in barycenters of complexes $\Delta_0 \subset \mathcal{K}^{\langle 1 \rangle}$. We connect vertices with an edge if respective complexes are adjacent.
		
		By Theorem \ref{thm:even} we know that $\mathcal{K}^{\langle\infty\rangle}$ with even $k$ has GLP if and only if complexes $\Delta_0 \subset \mathcal{K}^{\langle 1 \rangle}$ can be divided into two classes such that complexes from one class are adjacent only to complexes from the other. Considering the given graph it means that the fractal has GLP if and only if that graph is bipartite.

		From general graph theory we know that bipartiteness of the graph is equivalent to the fact that each cycle in that graph has even length. We obtain the thesis.
		\end{proof}
		
		We follow the definitions by \cite{bib:Schroeder} and \cite{bib:Weisstein}.
		
		\begin{definition}
		The minimal polynomial of an algebraic number $x$ is the monic polynomial with rational coefficients of the lowest degree possible such that $x$ is its root.
		\end{definition}
	  For example, for $\sqrt{2}$ one can easily see that the minimal polynomial is $ x^2-2.$ In our proof we will consider numbers of form $e^{2 \pi i p/q}$ for $ p/q\in\mathbb{Q}.$ They are algebraic numbers.
		\begin{definition}
		The primitive root of unity is a number $e^{2 \pi i \frac{l}{n}}$, where $l$ and $n$ are natural and coprime.
		\end{definition}
		\begin{definition}
		The cyclotomic polynomial is a monic polynomial of a form
			$$
			\Phi_{n}(x)=\prod_{1 \leq l \leq n \atop \operatorname{GCD}(l, n)=1}\left(x-e^{2\pi i\frac{l}{n}}\right).
			$$
		\end{definition}
		\begin{proposition}{\cite{bib:Weisstein}}
		Function $\Phi_{n}(x)$ is a polynomial of degree $\phi(n)$ with integer coefficients. Here $\phi(n)$ is an Euler totient function. The function $\Phi_n(x)$ is a minimal polynomial for $n$th primitive roots of unity.
		\end{proposition}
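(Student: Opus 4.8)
The statement splits into three claims, which I would address in increasing order of difficulty. The degree is immediate from the definition: $\Phi_n$ is a product of the linear factors $(x - e^{2\pi i l/n})$ over exactly those $l$ with $1 \le l \le n$ and $\gcd(l,n)=1$, and there are by definition $\phi(n)$ such $l$. Hence $\Phi_n$ is monic of degree $\phi(n)$, with the $\phi(n)$ primitive $n$th roots of unity as its roots.

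For the integer coefficients, the plan is to exploit the factorization
$$x^n - 1 = \prod_{d \mid n} \Phi_d(x),$$
which holds because every $n$th root of unity is a primitive $d$th root of unity for exactly one divisor $d \mid n$, so grouping the linear factors of $x^n-1$ according to the order of each root reproduces the right-hand side. I would then proceed by strong induction on $n$. The base case $\Phi_1(x)=x-1$ lies in $\mathbb{Z}[x]$, and for $n>1$ one writes
$$\Phi_n(x) = \frac{x^n - 1}{\prod_{d \mid n,\, d < n} \Phi_d(x)}.$$
By the inductive hypothesis the denominator is monic in $\mathbb{Z}[x]$; since dividing one monic integer polynomial by another, when the division is exact, produces a quotient in $\mathbb{Z}[x]$ (the polynomial division algorithm never inverts a leading coefficient here), we conclude $\Phi_n \in \mathbb{Z}[x]$.

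The heart of the matter is that $\Phi_n$ is the minimal polynomial of each primitive $n$th root of unity, equivalently that $\Phi_n$ is irreducible over $\mathbb{Q}$. Fix $\zeta = e^{2\pi i/n}$ and let $f$ be its minimal polynomial; then $f$ is monic, lies in $\mathbb{Z}[x]$ by Gauss's lemma, and divides $\Phi_n$, say $\Phi_n = f g$ with $f, g$ monic in $\mathbb{Z}[x]$. The key lemma I would prove is: if $p$ is a prime with $p \nmid n$ and $\xi$ is any root of $f$, then $\xi^p$ is again a root of $f$. Indeed, $\xi^p$ is a primitive $n$th root and hence a root of $\Phi_n = fg$; if it were a root of $g$ rather than $f$, then $f(x) \mid g(x^p)$ in $\mathbb{Z}[x]$. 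Reducing modulo $p$ and using the Frobenius congruence $g(x^p) \equiv g(x)^p \pmod p$, the reductions $\bar f$ and $\bar g$ would share a common factor in $\mathbb{F}_p[x]$, forcing $x^n - 1$ to have a repeated root over $\mathbb{F}_p$; but $p \nmid n$ makes $x^n-1$ separable there, since its derivative $n x^{n-1}$ is coprime to it, a contradiction.

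To finish, I would bootstrap the lemma: every primitive $n$th root of unity is $\zeta^m$ with $\gcd(m,n)=1$, and factoring $m$ into primes—each necessarily coprime to $n$—and applying the lemma repeatedly shows that every such $\zeta^m$ is a root of $f$. Thus $\deg f \ge \phi(n) = \deg \Phi_n$, which together with $f \mid \Phi_n$ forces $f = \Phi_n$; so $\Phi_n$ is irreducible and is exactly the minimal polynomial claimed. I expect this irreducibility step to be the only real obstacle: the reduction-mod-$p$ argument, resting on the Frobenius congruence and the separability of $x^n-1$, is the sole non-formal ingredient, whereas the degree count and the integrality are essentially bookkeeping.
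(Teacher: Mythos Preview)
Your proof is correct and follows the classical Dedekind argument for the irreducibility of $\Phi_n$. However, the paper does not actually prove this proposition: it is stated with a citation to \cite{bib:Weisstein} and used as a black box in the proof of Theorem~\ref{thm:2^n}. There is therefore nothing in the paper to compare your argument against; you have supplied a full standard proof where the authors chose to quote the result.
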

		We may now proceed to the proof of Theorem \ref{thm:2^n}.
		\begin{proof}[Proof of Theorem \ref{thm:2^n}]
		Assume that $k=2^n$ for some $n \in \mathbb{N}, n>1$. We will show that every cycle of complexes $\Delta_0 \subset \mathcal{K}^{\langle 1 \rangle}$ is of even length. By Lemma \ref{lem:cykle} it means that the fractal has GLP.

		We denote the elements of $V_0^{\langle 0 \rangle}$ (i.e. vertices of a complex $\mathcal{K}^{\langle 0 \rangle}$) as $A_0, ..., A_{k-1}$ counter clockwise. In order to simplify further calculations we scale and rotate the complex so that $A_0 = (-1,0)$ oraz $A_{k/2} = (1,0)$. Then the barycenter of the complex is at $(0,0)$ and sides of the polygon $\mathcal{H}\left(\mathcal{K}^{\langle 0 \rangle}\right)$ have length $s_k = 2\sin(\pi /k)$.

		From Fact 3.1 in \cite{bib:KOPP} we know that the vector connecting centers of adjacent complexes has length equal to the longest diagonal of a regular $k$-gon and is paralell to one of these diagonals.
		
		Let us consider the family of vectors corresponding to the longest diagonals of a regular $k$-gon: $e_j = \overrightarrow{A_j A_{j+k/2}}$ for $j\in \{0,...,k-1\}$ (we set $A_{k+j}=A_j$). We consider also the family of vectors $v_j = \left(\cos\left(\frac{2 \pi j}{k}\right), \sin\left(\frac{2 \pi j}{k}\right)\right)$ corresponding to $k$th roots of unity on a complex plane.
		
		We have $\frac{1}{2} e_j = v_j$ for $j =0, ..., k-1$, because $v_j$ are just halves of the longest diagonals of a regular $k$-gon with vertices in roots of unity.
		
		We consider a cycle $C$ of $0$-complexes. The sum of vectors connecting barycenters of subsequent complexes must be equal to zero. Therefore for some $d_j \in \mathbb{N}$ (meaning the quantity of vectors in the cycle) we obtain
		\begin{equation*}
		\sum_{j=0}^{k-1} d_j e_j = 0
		\end{equation*}
		
		Since $e_{j+k/2} = - e_j$ dla $j=0,...,\frac{k}{2}-1$, we can simplify the sum. Let $\widetilde{d}_j = d_j - d_{j+k/2} \in \mathbb{Z}$. Then
		\begin{equation*}
		\sum_{j=0}^{k/2 -1} \widetilde{d}_j e_j= 0
		\end{equation*}
		
Equivalently 

		\begin{equation*}
		\sum_{j=0}^{k/2 -1} \widetilde{d}_j v_j = 0
		\end{equation*}

so

			\begin{equation}
			\label{eq:wielomian}
				\sum_{j=0}^{k/2-1} \widetilde{d}_j \left( e^{2\pi i\frac{1}{k}}\right) ^j = 0.
			\end{equation}

			Clearly $e^{2\pi i\frac{1}{k}}$ is a primitive root of unity. We define a polynomial
			$$
			P(z):=\sum_{j=0}^{k/2-1} \widetilde{d}_j z^j.
			$$
			Then $P\left( e^{2\pi i\frac{1}{k}}\right) =0$. But for $e^{2\pi i\frac{1}{k}}$ a minimal polynomial is of degree $\phi(k) =\phi(2^n)=2^{n-1}$ and $\deg P(z)\leq 2^{n-1}-1<2^{n-1}$.
			
			Hence $P$ must be a zero polynomial, i.e. $\widetilde{d}_j = 0 $ for $j = 0, ..., \frac{k}{2}-1$, what gives
			
			\begin{equation*}
			d_j = d_{j+k/2} \quad \textrm{ for } j = 0, ..., \frac{k}{2}-1.
			\end{equation*}
		
			It means that a number of complexes in cycle $C$ equal to $\sum_{i=0}^{k-1} d_i$ is an even number.
			
			Since every cycle of complexes $\Delta_0 \subset \mathcal{K}^{\langle 1 \rangle}$ has even length, these complexes form a bipartite structure and by Lemma \ref{lem:cykle} the fractal has GLP.
			\end{proof}
		
		One can now ask: are there any other values of $k$ (composite and not powers of 2) such that every fractal with $k$ essential fixed points always has GLP?
		The answer is negative. 
		\begin{theorem}\label{thm:jedynosc}
			For each $k$ being composite and not a power of two we can construct fractal with $k$ essential fixed points not having GLP. 
		\end{theorem}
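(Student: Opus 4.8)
The plan is to treat separately the two families of $k$ in the statement — $k$ odd and composite, and $k$ even but not a power of two — and for every such $k$ to produce an explicit simple nested fractal with exactly $k$ essential fixed points whose level-one complex $\mathcal{K}^{\langle 1\rangle}$ fails the relevant characterization of GLP: Theorem \ref{thm:odd} when $k$ is odd, and Theorem \ref{thm:even} together with Lemma \ref{lem:cykle} when $k$ is even. So in each case I am looking for a fractal whose $\mathcal{K}^{\langle 1\rangle}$ contains a ``bad cycle'' of $0$-complexes: an odd cycle when $k$ is even, and a cycle with $c$ rotations by $\tfrac{k+1}{k}\pi$, $d$ rotations by $\tfrac{k-1}{k}\pi$, and $k\nmid(c-d)$ when $k$ is odd.

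For $k$ even and not a power of two, $k$ has at least two distinct prime divisors, so $\Phi_k(1)=1$ is odd. Repeating the computation in the proof of Theorem \ref{thm:2^n}, a cycle of $0$-complexes in $\mathcal{K}^{\langle 1\rangle}$ yields an integer relation $\sum_{j=0}^{k/2-1}\widetilde d_j v_j=0$ among the half-diagonal vectors $v_j=\bigl(e^{2\pi i/k}\bigr)^j$, and, conversely, any such relation lifts to a closed walk of center-to-center vectors whose length is $\equiv\sum_j\widetilde d_j\pmod 2$. Since $e^{2\pi i/k}$ is a primitive $k$-th root of unity with minimal polynomial $\Phi_k$, the admissible tuples $(\widetilde d_j)$ are exactly the coefficient tuples of integer polynomials of degree $<k/2$ divisible by $\Phi_k$; taking $\Phi_k$ itself gives $\sum_j\widetilde d_j=\Phi_k(1)$, which is odd. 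Hence there is an abstract odd closed walk and, once it is realized inside an honest $\mathcal{K}^{\langle 1\rangle}$, an odd cycle in its adjacency graph, so a non-bipartite graph, so no GLP by Lemma \ref{lem:cykle}.

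For $k$ odd and composite (hence not a power of two), Theorem \ref{thm:odd} reduces the problem to exhibiting a cycle of $0$-complexes with $k\nmid(c-d)$. On the geometric side this is easy: for odd $k$ the total turning $c\tfrac{k+1}{k}\pi+d\tfrac{k-1}{k}\pi$ is automatically an element of the rotation group of the regular $k$-gon, so such a loop of $k$-gons always closes up, and it is GLP-compatible exactly when that element is the identity, i.e. when $c\tfrac{k+1}{k}\pi+d\tfrac{k-1}{k}\pi=2\pi n$, equivalently $k\mid(c-d)$. Thus it is enough to realize, inside some $\mathcal{K}^{\langle 1\rangle}$, a short loop of $k$-gons — each obtained from the previous by one of the two admissible rotations about a shared vertex — with $0<|c-d|<k$; for instance a loop using only one of the two rotation types, of length strictly between $0$ and $k$.

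In both cases the substantive part, and the step I expect to be the main obstacle, is turning such a loop into a sub-configuration of a genuine SNF (necessarily with $N>k$, since $k=N$ forces GLP by Theorem \ref{thm:cycle}). I would specify $\mathcal{K}^{\langle 1\rangle}$ directly as a finite union of congruent regular $k$-gons which (i) contains one copy at each vertex of $\mathcal{H}(\mathcal{K}^{\langle 1\rangle})$, (ii) is invariant under the full dihedral symmetry group of that $k$-gon, (iii) has every pairwise intersection contained in vertex sets and pairwise disjoint interiors, (iv) is connected, and (v) contains the bad loop; the translates $\Psi_i(x)=x/L+\nu_i$ occurring in this picture ($L$ being the ratio of the outer to the inner side length) then generate an SNF for which every condition of Definition \ref{def:snf} holds by construction and the number of essential fixed points equals $k$. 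The delicate point is doing (ii)--(v) at the same time: the orbit of the bad loop under the dihedral group must be completed to a connected symmetric configuration without creating forbidden edge-overlaps and without destroying the parity (even $k$) or the nontrivial net rotation (odd $k$). I expect this to require a direct, case-by-case geometric construction in $k$, in the spirit of the Lindström snowflake ($k=6$) and of Figure \ref{W1dla10} ($k=10$); for even $k$ one can lighten the verification by appealing to Theorem \ref{thm:closedslice} and exhibiting only a single closed slice $\overline{W_i}$ whose complexes admit no two-coloring.
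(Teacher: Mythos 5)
Your overall strategy is the paper's: exhibit inside $\mathcal{K}^{\langle 1\rangle}$ a ``bad'' cycle of $0$-complexes (odd length for even $k$, so non-bipartite via Theorem \ref{thm:even}/Lemma \ref{lem:cykle}; a cycle with $k\nmid(c-d)$ for odd $k$ via Theorem \ref{thm:odd}), then complete it to a genuine SNF. The difference, and where a real gap appears, is in how you produce the bad cycle for even $k$. Your cyclotomic argument correctly shows that the kernel of the evaluation $(\widetilde d_j)\mapsto\sum_j\widetilde d_j v_j$ contains the coefficient vector of $\Phi_k$, whose coefficient sum $\Phi_k(1)=1$ is odd; but this only gives an \emph{abstract} integer relation among the half-diagonal directions. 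To conclude you must realize it as an actual closed walk of pairwise properly-intersecting $0$-complexes inside a set satisfying the nesting and open-set conditions, and the walk read off from the coefficients of $\Phi_k$ (which for general $k$ has many nonzero coefficients of both signs) has no evident reason to avoid self-overlaps of complexes or to be completable to a dihedrally symmetric, connected $\mathcal{K}^{\langle 1\rangle}$. You flag this as ``the main obstacle'' but do not resolve it, and your odd-$k$ loop (``only one of the two rotation types, of length strictly between $0$ and $k$'') is likewise not pinned down: a loop of constant rotation type closes up in position only for specific lengths, which you would still need to identify.

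The paper sidesteps all of this with one explicit choice that covers both cases at once. Write $k=2^n r$ with $r>1$ odd (for even $k$ not a power of two), or take $r>1$ any proper odd divisor of $k$ (for odd composite $k$). The translation vectors $e^{2\pi i h/r}=e^{2\pi i(hk/r)/k}$, $h=0,\dots,r-1$, are admissible center-to-center vectors and sum to zero, so placing the $q$-th complex at $\sum_{h=0}^{q-1}e^{2\pi i h/r}$ gives a necklace of exactly $r$ complexes whose consecutive members share one vertex and whose non-consecutive members are disjoint --- a manifestly realizable simple cycle of odd length $r$. For even $k$ this odd cycle kills bipartiteness; for odd $k$ it has odd length $r<k$ and Corollary \ref{thm:<k} applies. (The remaining step --- assembling $k$ such necklaces, padded if necessary, into a full symmetric $\mathcal{K}^{\langle 1\rangle}$ --- is left informal in the paper too, so you are not behind there; but you should replace the $\Phi_k$-based walk by this explicit $r$-gon of complexes, or else supply the missing geometric realizability argument.)
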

	At first we prove a simple corollary based on Theorem \ref{thm:odd}.
	\begin{corollary}\label{thm:<k}
		For odd $k$ a fractal cannot have cycles of complexes, which length is odd and less than $k$.
	\end{corollary}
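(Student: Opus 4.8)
The plan is to obtain the corollary from Theorem~\ref{thm:odd} after one parity remark. First I would suppose, for contradiction, that a simple nested fractal with an odd number $k$ of essential fixed points has a cycle of $0$-complexes $\Delta_{0,1},\dots,\Delta_{0,j}$ in $\mathcal{K}^{\langle 1\rangle}$ with $j$ odd and $j<k$. By Fact~3.1 of \cite{bib:KOPP} (recalled before Theorem~\ref{thm:odd}), each of the $j$ consecutive adjacencies of the cycle is a rotation of one complex onto the next, about their common vertex, by $\frac{k+1}{k}\pi$ or $\frac{k-1}{k}\pi$; I would write $c$ and $d$ for the numbers of rotations of these two kinds, so that $c+d=j$.

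Next I would make the elementary observation carrying the argument: $c-d$ and $c+d=j$ differ by the even integer $2d$, hence have the same parity, so $c-d$ is odd; in particular $c-d\neq 0$. Combined with $|c-d|\le c+d=j<k$, this forces $k\nmid(c-d)$. Then I would invoke condition~(B) of Theorem~\ref{thm:odd}: a fractal with the GLP satisfies $k\mid(c-d)$ for \emph{every} cycle of $0$-complexes, which our cycle violates; hence the presence of such a cycle is incompatible with the GLP. For $k$ prime this already gives the corollary outright, because by Theorem~\ref{thm:prime} every fractal with a prime number of essential fixed points has the GLP and therefore cannot contain the cycle in question; for composite odd $k$ it is the statement in the form in which it will be used, namely that a fractal carrying such a cycle cannot have the GLP.

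I do not expect any genuine obstacle here — all the real content is already in Theorem~\ref{thm:odd}, and the proof is essentially the two-line computation above. The only points requiring (routine) attention are that $\frac{k+1}{k}\pi$ and $\frac{k-1}{k}\pi$ are the only rotation angles that can occur along a cycle when $k$ is odd, and that the cycle length equals the total number $c+d$ of rotations; both are exactly Fact~3.1 of \cite{bib:KOPP}. It is worth recording, for use in the proof of Theorem~\ref{thm:jedynosc}, that the argument also shows the shortest odd cycle capable of obstructing the GLP has length at least $k$.
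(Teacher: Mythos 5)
Your argument is correct and coincides with the paper's own proof: both reduce the claim to condition (B) of Theorem \ref{thm:odd} by noting that $c-d$ has the same parity as $c+d=j$ (hence is odd and nonzero) while $|c-d|\le c+d<k$, so $k\nmid(c-d)$. The extra remarks about the prime case and the later use in Theorem \ref{thm:jedynosc} are harmless but not needed.
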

	This corollary may serve as a simple necessary condition used to verify GLP on fractals with odd $k$. It cannot be generalized for cycles of even length (cf. Figure \ref{6cykl}).
	
	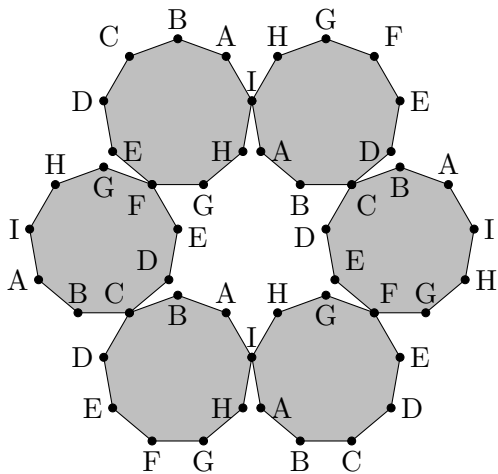
\begin{figure}[H]
		\centering
		\begin{tikzpicture}
			\coordinate (A) at (0,0);	 
			
			\foreach \x in {1,2,...,9}{
				\path (A) ++(\x *40+10:1cm) coordinate (A\x);
			}
			
			\path (A9) ++(-10:1cm) coordinate (B);
			\foreach \x in {1,2,...,9}{
				\path (B) ++(\x *40+10:1cm) coordinate (B\x);
			}
			
			\path (B1) ++(70:1cm) coordinate (C);
			\foreach \x in {1,2,...,9}{
				\path (C) ++(\x *40+10:1cm) coordinate (C\x);
			}
		
			\path (C3) ++(110:1cm) coordinate (D);
			\foreach \x in {1,2,...,9}{
				\path (D) ++(\x *40+10:1cm) coordinate (D\x);
			}
		
			\path (D4) ++(190:1cm) coordinate (E);
			\foreach \x in {1,2,...,9}{
				\path (E) ++(\x *40+10:1cm) coordinate (E\x);
			}
		
			\path (E6) ++(230:1cm) coordinate (F);
			\foreach \x in {1,2,...,9}{
				\path (F) ++(\x *40+10:1cm) coordinate (F\x);
			}
		
			\foreach \x in {A,B,C,D,E,F}{
				\filldraw[fill=lightgray,draw=black] (\x1) -- (\x2) -- (\x3) -- (\x4) -- (\x5) -- (\x6) -- (\x7)-- (\x8)-- (\x9) --cycle;
				\foreach \y in {1,2,...,9}{
					\filldraw [black] (\x\y) circle (1.5pt);
				}
			}
		
			\node[above] at (A1) {A};
			\node[below] at (A2) {B};
			\node[above, xshift=-0.2cm] at (A3) {C};
			\node[left] at (A4) {D};
			\node[left] at (A5) {E};
			\node[below] at (A6) {F};
			\node[below] at (A7) {G};
			\node[left] at (A8) {H};
			\node[above] at (A9) {I};
			
			\node[right] at (B5) {A};
			\node[below] at (B6) {B};
			\node[below] at (B7) {C};
			\node[right] at (B8) {D};
			\node[right] at (B9) {E};
			\node[above, xshift=0.2cm] at (B1) {F};
			\node[below] at (B2) {G};
			\node[above] at (B3) {H};
			
			\node[above] at (C1) {A};
			\node[below] at (C2) {B};
			\node[below, xshift=0.2cm] at (C3) {C};
			\node[left,yshift=-0.1cm] at (C4) {D};
			\node[above right] at (C5) {E};
			\node[above] at (C7) {G};
			\node[right] at (C8) {H};
			\node[right] at (C9) {I};
			
			\node[above right] at (D1) {F};
			\node[above] at (D2) {G};
			\node[above] at (D3) {H};
			\node[above] at (D4) {I};
			\node[right] at (D5) {A};
			\node[below] at (D6) {B};
			\node[left] at (D8) {D};
			\node[right] at (D9) {E};
			
			\node[above] at (E1) {A};
			\node[above] at (E2) {B};
			\node[above left] at (E3) {C};
			\node[left] at (E4) {D};
			\node[right] at (E5) {E};
			\node[below, xshift=-0.2cm] at (E6) {F};
			\node[below] at (E7) {G};
			\node[left] at (E8) {H};
			
			\node[below] at (F2) {G};
			\node[above] at (F3) {H};
			\node[left] at (F4) {I};
			\node[left] at (F5) {A};
			\node[above] at (F6) {B};
			\node[above left] at (F8) {D};
			\node[right,yshift=-0.1cm] at (F9) {E};
		\end{tikzpicture}
	\caption{An example of well labeled six complex cycle for $k=9$.}\label{6cykl}
	\end{figure}
	\begin{proof}
		We know that the number of all rotations in cycle is equal to $c+d$, which is odd and less than $k$.
		The following inequalities hold:
		\begin{equation}
			0<|c-d|<k.
		\end{equation} 
	The left inequality is due to a fact, that we subtract two numbers of different parity. The right one is obvious.
	
	Hence $k\nmid(c-d)$, so the condition (B) from Theorem \ref{thm:odd} is not satisfied.
	\end{proof}
We can notice that Theorem \ref{thm:prime} and Corollary \ref{thm:<k} imply that for prime $k$ there exist no cycles of odd length smaller than $k$.

We now return to the proof of the theorem.
	\begin{proof}[Proof of Theorem \ref{thm:jedynosc}]
	We fix $k$. Our aim is to arrange several $0$-complexes in cycle and show that they cannot be well labeled. 
	
		For a given $0$-complex any of its neighbor is its copy translated by a vector $ e^{2\pi i\frac{j}{k}} $ for some $ j\in\{0,1,...,k-1\} $. 
		We consider two cases.
				
		Case 1. 
		
		$k=2^n r$ for $n\geq 1$ and odd $ r>1 $.\\
 We consider a set of vectors $ E=\{e^{2\pi i\frac{j}{2^n r}}: j=2^n h, h\in\{0,1,...,r-1\}  \} $. Notice that 
		$$ 
		\sum_{h=0}^{r-1}e^{2\pi i\frac{2^n h}{2^n r}}= \sum_{h=0}^{r-1}e^{2\pi i\frac{h}{r}}=0,
		$$
		because we sum all $r$th roots of unity.\\
		Now we produce a cycle of length $r$. The barycenter of $q$-th complex $\Delta_{0}$ for $q \in \{1,...,r\}$ will be placed at
		$$
		\sum_{h=0}^{q-1}e^{2\pi i\frac{h}{r}}.
		$$
		Since this cycle has odd number of elements, due to Theorem \ref{thm:even} it cannot be well labeled.
		
		Case 2. 
		
		$ k=nr$ for any odd $ n>1$ and $ r>1 $.\\
		As previously, we cosider a set $ E=\{e^{2\pi i\frac{j}{nr}}: j=hn, h\in\{0,1,...,r-1\}  \} $. We may notice that again
		$$ 
		\sum_{h=0}^{r-1}e^{2\pi i\frac{h n}{n r}}= \sum_{h=0}^{r-1}e^{2\pi i\frac{h}{r}}=0.
		$$
		We produce a cycle of length $r$. The barycenter of $q$-th complex $\Delta_{0}$ for $q \in \{1,...,r\}$ will be placed at
		$$
		\sum_{h=0}^{q-1}e^{2\pi i\frac{h}{r}}.
		$$
		Since a number of rotations in this cycle is equal to $r<k$, due to Corollary \ref{thm:<k} it cannot be well labeled.
		
		In both cases we can take $k$ neighboring cycles (forming one big cycle) to construct a $1$-complex $\mathcal{K}^{\langle 1 \rangle}$. In some cases it may be needed to add some complexes to the structure to separate these cycles and assure that they do not cover any other. Anyway, such structure cannot be well labeled, therefore a fractal does not have GLP.
		\end{proof}

For every $k\geq 3$ one can of course construct an example of a nested fractal having GLP. If $4\nmid k$ it is enough to take $N=k$, so the fractal $\mathcal{K}^{\langle 1 \rangle}$ would consist of $k$ 0-complexes creating one cycle (as in Sierpi\'nski hexagon).

If $4|k$, then in such construction neighboring complexes would share whole edges, therefore the set would not be a nested fractal. We can take $N=2k$. We place $k$ 0-complexes in vertices of $\mathcal{K}^{\langle 1 \rangle}$ and additionally between each pair of complexes in neighboring vertices we place another $0$-complex in such way, that it shares one vertex with both adjacent complexes.

Summing up, for each $k\geq 3$ there exist fractals having GLP. If $k$ is prime or $k=2^n$, then fractal surely has GLP. If $k$ is composite and has nontrivial odd divisor, then it is possible that it does not have that property.

\bigskip

\end{document}